\pgfplotsset{compat=newest}
\tikzset{>=stealth'}
\tikzset{
  text deco/.style={postaction={decorate, decoration={text along path,#1}}}}
\DeclareMathOperator{\diag}{diag}
\newtheorem{theorem}{Theorem}
\newtheorem{lemma}[theorem]{Lemma}
\newtheorem{remark}[theorem]{Remark}
\newtheorem{corollary}[theorem]{Corollary}
\newcommand{\widebar}{\overline}
\newcommand{\ub}{\boldsymbol{u}}
\newcommand{\vb}{\boldsymbol{v}}
\newcommand{\pb}{\boldsymbol{p}}
\newcommand{\qb}{\boldsymbol{q}}
\newcommand{\fb}{\boldsymbol{f}}
\newcommand{\gb}{\boldsymbol{g}}
\newcommand{\yb}{\boldsymbol{y}}
\newcommand{\zb}{\boldsymbol{z}}
\newcommand{\wb}{\boldsymbol{w}}
\renewcommand{\eqref}{\cref}
\title[On the analysis of block smoothers]{On the analysis of block smoothers\\ for saddle point problems}
\author[L.~John, U.~R\"ude, B.~Wohlmuth, and W.~Zulehner]{Lorenz~John$^\MakeLowercase{a}$\and Ulrich~R\"ude$^\MakeLowercase{b}$\and Barbara~Wohlmuth$^\MakeLowercase{a}$\and Walter~Zulehner$^\MakeLowercase{c}$}
\date{}
\begin{document}

\maketitle
\vspace*{-1.5em}

\begin{center}
\begin{footnotesize}
$^a$ Institute for Numerical Mathematics, Technische Universit\"at M\"unchen, 85748 Garching b.~M\"unchen, Germany\\
$^b$ Institute of System Simulation, University Erlangen-Nuremberg, 91058 Erlangen, Germany\\
$^c$ Institute of Computational Mathematics, Johannes Kepler University Linz, 4040 Linz, Austria
\end{footnotesize}
\end{center}

\begin{abstract}
In this article, we discuss several classes of Uzawa smoothers for the application in multigrid methods in the context of saddle point problems. Beside commonly used variants, such as the inexact and block factorization version, we also introduce a new symmetric method, belonging to the class of Uzawa smoothers. For these variants we unify the analysis of the smoothing properties, which is an important part in the multigrid convergence theory. These methods are applied to the Stokes problem
for which all smoothers are implemented as pointwise relaxation methods.
Several numerical examples illustrate the theoretical results.
\end{abstract}

\bigskip
{\small
\noindent Key words: Uzawa method, multigrid, smoothing property, saddle point problem, Stokes problem

\bigskip 
\noindent AMS subject classifications: 65F10, 65N55, 65N30
}
\section{Introduction}
Multigrid methods are considered as one of the most efficient solvers for the approximative solution of discretized (systems of) partial differential equations. These methods have been widely used in the past for different types of equations and applications, see, e.g., \cite{hackbusch85, trottenberg_2001} for a general overview and \cite{BWY1990, braess-sarazin_1997, BL_2011, brenner-li-sung_2014, gaspar-notay-osterlee-rodrigo_2014, OG_2008, schoeberl_2003, wittum_1990, WT_2009} for saddle point problems. Also with the increasing compute power these methods became more popular, see, e.g., \cite{gmeiner-huber-john-ruede-wohlmuth_2015,GRSWW2015, NN2015}.

In this paper, we discuss multigrid methods for problems whose discretization leads to linear systems in saddle point form
\begin{equation} \label{ABC}
  \mathcal{A} \begin{bmatrix} \ub \\ \pb \end{bmatrix} = \begin{bmatrix} \fb \\ \gb \end{bmatrix} 
  \quad \text{with} \quad
  \mathcal{A} = \begin{bmatrix}  A & B^\top \\ B & - C \end{bmatrix},
\end{equation}
where $A \in \mathbb{R}^{n\times n}$, $C \in \mathbb{R}^{m\times m}$ are symmetric and positive semi-definite, and $B \in \mathbb{R}^{m \times n}$ with $\ker A \cap \ker B = \{0\}$ and $\ker B^\top \cap \ker C = \{0\}$ and $n,m \in \mathbb{N}$. Note, the last conditions ensure that $\mathcal{A}$ is non-singular. In particular, we focus on multigrid methods for the Stokes problem in fluid mechanics. 

One essential component of efficient multigrid methods is the smoothing procedure, which consists of one step or a few steps of a non-expensive iterative method applied to \eqref{ABC}, whose role is to smooth the error. Here, we focus on stationary iterative methods of the form
\begin{equation} \label{PA}
  \begin{bmatrix} \ub_{k+1} \\ \pb_{k+1} \end{bmatrix}
   =
  \begin{bmatrix} \ub_{k} \\ \pb_{k} \end{bmatrix}
    + \mathcal{P}^{-1} \left(\begin{bmatrix} \fb \\ \gb \end{bmatrix} - \mathcal{A} \begin{bmatrix} \ub_{k} \\ \pb_{k} \end{bmatrix}\right),
\end{equation}
for the smoothing procedure with an appropriate choice for $\mathcal{P}$, such that the application of $\mathcal{P}^{-1}$ on a vector can be efficiently computed.
In particular this means that all formal matrix-blocks inversions
will be approximated by simple point-relaxation methods.

Convergence properties of iterative methods of the form \eqref{PA} have been studied in numerous articles, see, e.g., \cite{axelsson_2015, benzi05, notay14} and the references therein. Less known are the smoothing properties of such methods, which are essential for the convergence in multigrid theory  \cite{braess-sarazin_1997, brenner-li-sung_2014, olshanskii_2012, reusken_1991, schoeberl_2003, zulehner_2000}. For the analysis of the smoothing property for other iterative methods, see also \cite{brenner:1996,turek_1994, verfuerth:1984, verfuerth:1988}.

In this article, we aim to present a unified analysis for the smoothing property of several block preconditioners for saddle point problems \eqref{ABC}. One of the considered methods was already analyzed previously in \cite{schoeberl_2003}. Here, we prove the smoothing property of the so-called inexact Uzawa algorithm which has been successfully applied as a smoother in multigrid algorithms in the past, see, e.g., \cite{gaspar-notay-osterlee-rodrigo_2014, gmeiner-huber-john-ruede-wohlmuth_2015, larin-reusken_2008, maitre-musy-nigon_1984, schoeberl_2003}. 
Moreover, we present and analyze a new, symmetric smoother, which can be seen as a symmetrized variant of the inexact Uzawa method.

When applied to the Stokes system, all block-inversions will be
algorithmically approximated by point-relaxations
of Jacobi or Gauss-Seidel type.
Thus the algorithms are applicable in the case when a parallel
software structure for distributed memory systems
permits only restricted data access operations.
In this case, the distributed smoothers of
\cite{BL_2011} cannot be used though they may be more effective,
see, e.g., \cite{wang2013multigrid}.

This article is structured as follows: In \cref{sec:framework}, we introduce our notation and recall the key ingredients for the multigrid convergence analysis. \Cref{sec:classes} gives an overview on the different Uzawa-type smoothers and their relations to each other. In \cref{sec:smoothing_property}, we present the analysis of the smoothing property for the individual methods. Finally, in \cref{sec:numerics}, we present numerical tests, applied to the Stokes problem, illustrating the obtained theoretical results.

\section{Notation and multigrid analysis}\label{sec:framework}

In this section, we introduce some notations and recall the key ingredients for the multigrid convergence analysis.

Let $V$ and $Q$ be real-valued Hilbert spaces. We assume that the linear system \eqref{ABC} results from the discretization of a saddle point problem of the following form: Find $(u,p) \in V \times Q$ such that
\begin{equation}
\begin{alignedat}{3} \label{contabc}
  a(u,v) & + b(v,p) && = f(v) & \quad & \forall \, v \in V,\\
  b(u,q) & - c(p,q) && = g(q) & \quad & \forall \, q \in Q,
\end{alignedat}
\end{equation}
with the bilinear forms $a \colon V \times V \rightarrow \mathbb{R}$, $b \colon V \times Q \rightarrow \mathbb{R}$, $c \colon Q \times Q \rightarrow \mathbb{R}$, and linear functionals $f \in V^\ast$, $g \in Q^\ast$. Notice, the symbol $\ast$ indicates the dual Hilbert space. Equivalently, the mixed variational problem \eqref{contabc} can be written as a variational problem in the product space: Find $(u,p) \in V \times Q$ such that
\begin{equation} \label{contB}
  \mathcal{B}((u,p),(v,q)) = \mathcal{F}(v,q) \quad \forall \, (v,q) \in V \times Q,
\end{equation}
with the bilinear form $\mathcal{B}((u,p),(v,q)) = a(u,v) + b(v,p) + b(u,q) - c(p,q)$ and the linear functional $\mathcal{F}(v,q) = f(v) + g(q)$.

For the discretization of problem \eqref{contB} a sequence of appropriate finite-dimensional subspaces $V_\ell \subset V$ and $Q_\ell \subset Q$, $\ell = 0,1,\ldots,L$, (typically finite element spaces) have to be chosen, corresponding to a sequence of increasingly finer meshes of the computational domain. We assume for simplicity that these subspaces are nested, i.e.,
\[
  V_{\ell-1} \subset V_\ell 
  \quad \text{and} \quad
  Q_{\ell-1} \subset Q_\ell \quad \forall\, \ell=1, \dots, L.
\]
The approximate solution $(u_\ell, p_\ell) \in V_\ell\times Q_\ell$ on level $\ell$ is given by the discrete variational problem
\begin{equation} \label{discrB}
  \mathcal{B}_\ell((u_\ell,p_\ell),(v_\ell,q_\ell)) = \mathcal{F}_\ell(v_\ell,q_\ell) 
  \quad \forall \, (v_\ell,q_\ell) \in V_\ell \times Q_\ell,
\end{equation}
with $\mathcal{B}_\ell((u_\ell,p_\ell),(v_\ell,q_\ell)) = a(u_\ell,v_\ell) + b(v_\ell,p_\ell) + b(u_\ell,q_\ell) - c_\ell(p_\ell,q_\ell),$ and $\mathcal{F}_\ell(v_\ell,q_\ell) = f(v_\ell) + g_\ell(q_\ell)$, where $c_\ell(\cdot,\cdot)$ is a bilinear form on the discrete space, and $g_\ell \in Q_\ell^*$.

\subsection{Multigrid methods and their convergence analysis}
In the following we introduce the notation for the multigrid method and recall some crucial results. Let $(u_\ell^{(0)},p_\ell^{(0)}) \in V_\ell \times Q_\ell$ be a given approximation of the solution of \eqref{discrB} on level $\ell \ge 1$. Then one step of a multigrid algorithm at level $\ell$ is recursively defined by the following three steps:
\begin{enumerate}
\item \textit{Pre-smoothing}: compute $(u_\ell^{(\nu_1)},p_\ell^{(\nu_1)})$ by an iterative method of the form
\[
  (u_\ell^{(k+1)},p_\ell^{(k+1)}) = {\mathcal S}_\ell \, (u_\ell^{(k)},p_\ell^{(k)}) ,
  \quad \forall\, k = 0,\ldots,\nu_1-1.
\]
\item \textit{Coarse grid correction}: set
\[
  \mathcal{R}_\ell(v_\ell,q_\ell)
     =  \mathcal{F}_\ell(v_\ell,q_\ell) -
        {\mathcal B}_\ell \big( (u_\ell^{(\nu_1)},p_\ell^{(\nu_1)}),(v_\ell,q_\ell)
	               \big),\quad \forall\, (v_\ell,q_\ell) \in V_\ell \times Q_\ell,
\]
and let $(\bar{w}_{\ell-1},\bar{r}_{\ell-1}) \in V_{\ell-1} \times Q_{\ell-1}$ satisfy
\begin{equation} \label{cgc}
  {\mathcal B}_{\ell-1}((\bar{w}_{\ell-1},\bar{r}_{\ell-1}),(v_{\ell-1},q_{\ell-1})) = \mathcal{R}_\ell(v_{\ell-1},q_{\ell-1}),
\end{equation}
for all $(v_{\ell-1},q_{\ell-1}) \in V_{\ell-1} \times Q_{\ell-1}$.\\
If $\ell = 1$, solve equation \eqref{cgc} exactly and set $(w_{\ell-1},r_{\ell-1}) = (\bar{w}_{\ell-1},\bar{r}_{\ell-1})$.\\
If $\ell > 1$, compute an approximation $(w_{\ell-1},r_{\ell-1})$ to $(\bar{w}_{\ell-1},\bar{r}_{\ell-1})$ by applying $\gamma$ steps 
of the multigrid algorithm at level $\ell-1$ applied to \eqref{cgc} with initial guess $(w_{\ell-1}^{(0)},r_{\ell-1}^{(0)}) = 0$.\\
Set 
\begin{equation} \label{multigrid}
   (u_\ell^{(\nu_1+1)},p_\ell^{(\nu_1+1)}) 
    = (u_\ell^{(\nu_1)},p_\ell^{(\nu_1)}) + (w_{\ell-1},r_{\ell-1}).
\end{equation}
\item \textit{Post-smoothing}: 
compute $(u_\ell^{(\nu +1)},p_\ell^{(\nu+1)})$ by an iterative method of the form
\[
  (u_\ell^{(k+1)},p_\ell^{(k+1)}) = {\mathcal S}_\ell^* \, (u_\ell^{(k)},p_\ell^{(k)}) ,
  \quad \forall\, k = \nu_1+1,\ldots,\nu_1+\nu_2 = \nu.
\]
Set $(u_\ell^{(\nu +1)},p_\ell^{(\nu+1)})$ as the next iterate after one step of the multigrid method.
\end{enumerate}

\noindent Recall, for $\gamma=1$ we obtain the $V$-cycle and for $\gamma=2$ the $W$-cycle.  

For the multigrid convergence analysis of saddle point problems we follow the standard approach. We refer to \cite{hackbusch85} as a general reference and to \cite{brenner:1996,verfuerth:1984} for the approach adapted to saddle point problems. In the particular case of the Stokes problem we refer e.g. to \cite{brenner-li-sung_2014, schoeberl_2003, verfuerth:1988}.
This approach is based on the introduction of an appropriate mesh-dependent norm on the product space, denoted by 
\[ 
  \|(v_\ell,q_\ell)\|_{0,\ell}
  \quad \forall \, (v_\ell,q_\ell) \in V_\ell \times Q_\ell.
\]
The choice of this norm depends on the particular saddle point problem and its approximation spaces, this will be discussed in \cref{sec:numerics}. Associated with this norm and the bilinear form of the discrete problem a second mesh-dependent norm is introduced by
\begin{equation} \label{discr2ell}
  \|(w_\ell,r_\ell)\|_{2,\ell} = \sup_{0 \neq (v_\ell,q_\ell) \in V_\ell \times Q_\ell} \frac{\mathcal{B}_\ell((w_\ell,r_\ell),(v_\ell,q_\ell))}{\|(v_\ell,q_\ell)\|_{0,\ell}}
  \quad \forall \, (w_\ell,r_\ell) \in V_\ell \times Q_\ell.
\end{equation}

For simplicity, we restrict in this article the convergence analysis of multigrid methods to the case of pre-smoothing only, i.e. $\nu_2 = 0$. The crucial properties for the convergence analysis are the \textit{smoothing property}
\begin{equation} \label{smoothProp}
  \|(u_\ell^{(\nu)},p_\ell^{(\nu)}) - (u_\ell,p_\ell)\|_{2,\ell} \le c_S \, \eta(\nu) \, \|(u_\ell^{(0)},p_\ell^{(0)}) - (u_\ell,p_\ell)\|_{0,\ell}
\end{equation}
and the \textit{approximation property}
\begin{equation} \label{approxProp}
   \|(\bar{u}_\ell^{(\nu+1)},\bar{p}_\ell^{(\nu+1)}) - (u_\ell,p_\ell)\|_{0,\ell} \le c_A \, \|(u_\ell^{(\nu)},p_\ell^{(\nu)}) - (u_\ell,p_\ell)\|_{2,\ell}
\end{equation}
with
\begin{equation} \label{twogrid} 
  (\bar{u}_\ell^{(\nu+1)},\bar{p}_\ell^{(\nu+1)}) = (u_\ell^{(\nu)},p_\ell^{(\nu)}) + (\bar{w}_{\ell-1},\bar{r}_{\ell-1}),
\end{equation}
where $(\bar{w}_{\ell-1},\bar{r}_{\ell-1})$ is the exact solution of the coarse grid correction equation \eqref{cgc}. The constants $c_S$ and $c_A$, and the function $\eta(\nu)$ are supposed to be independent of the level $\ell$ and
\[
  \eta(\nu) \to 0 \quad \text{as} \quad  \nu \to \infty.
\] 
Recall, if \eqref{multigrid} is replaced by \eqref{twogrid} in the original multigrid method from above, one obtains the (non-recursive) two-grid method, which involves only two levels of the discretization, level $\ell$ and the next coarser level $\ell-1$. It is immediately clear that smoothing property and approximation property imply the convergence of the two-grid method with contraction rate $q = c_A\, c_S\, \eta(\nu) < 1$ uniformly in $\ell$ provided the number of smoothing steps $\nu$ is sufficiently large. 

The uniform convergence of the original multigrid method for $\gamma=2$ ($W$-cycle) follows then by standard arguments, see \cite{hackbusch85}, but as it is typical for this approach, it requires
regularity assumptions.

\begin{remark}
For the extension of the convergence analysis to the general case which includes post-smoothing, i.e. $\nu_2 \neq 0$, we refer to \cite[Subsection 6.1.5]{hackbusch85}.
\end{remark}

\subsection{Matrix-vector notation}
Let $(v_\ell, q_\ell) \in V_\ell \times Q_\ell$, and denote the dimensions by $n_\ell = \dim V_\ell$, $m_\ell = \dim Q_\ell$. Then, the isomorphism $(v_\ell, q_\ell) \leftrightarrow (\vb_\ell, \qb_\ell) \in \mathbb{R}^{n_\ell} \times \mathbb{R}^{m_\ell}$ is valid, where the latter denote the vectors of coefficients relative to given bases in $V_\ell$ and $Q_\ell$ (typically the nodal bases in case of finite element spaces). 

Consequently, the discrete problem \eqref{discrB} in matrix-vector notation reads as \eqref{ABC}, with matrices $A_\ell \in \mathbb{R}^{n_\ell\times n_\ell}$, $B_\ell \in \mathbb{R}^{m_\ell \times n_\ell}$, $C_\ell \in \mathbb{R}^{m_\ell\times m_\ell}$, and vectors $\fb_\ell \in \mathbb{R}^{n_\ell}$, $\gb_\ell \in \mathbb{R}^{m_\ell}$, additionally indicated by the refinement level $\ell$ and defined by the discrete (bi-)linear forms.

For the mesh-dependent norm $\|(v_\ell, q_\ell)\|_{0,\ell}$, used in the convergence analysis, we assume that it is represented by a symmetric and positive definite matrix $\mathcal{L}_\ell \in \mathbb{R}^{(n_\ell+m_\ell)\times (n_\ell+m_\ell)}$ as
\begin{equation} \label{discr0}
  \| y_\ell \|_{0,\ell} = \langle \mathcal{L}_\ell \yb_\ell, \yb_\ell \rangle^{1/2} \equiv \| \yb_\ell \|_{\mathcal{L}_\ell}
  \quad \text{with} \ y_\ell = (v_\ell,q_\ell), \ \yb_\ell = \begin{bmatrix} \vb_\ell \\ \qb_\ell \end{bmatrix},
\end{equation}
where $\langle .,. \rangle$ denotes the Euclidean inner product. For the associated Euclidean norm of vectors as well as for the spectral norm of matrices, the symbol $\| \cdot \|$ is used. Hence, the second mesh-dependent norm has the following representation
\begin{equation} \label{discr2}
  \| z_\ell \|_{2,\ell} 
     = \sup_{0 \neq \yb_\ell \in \mathbb{R}^{n_\ell+m_\ell}} \frac{\langle\mathcal{A}_\ell \zb_\ell, \yb_\ell \rangle}{\|\yb_\ell\|_{\mathcal{L}_\ell}}
  \quad \text{with} \ z_\ell = (w_\ell,r_\ell), \ \zb_\ell = \begin{bmatrix} \boldsymbol{w}_\ell \\ \boldsymbol{r}_\ell \end{bmatrix}.
\end{equation}

Furthermore, we use throughout the article the following notation. For symmetric matrices $M$ and $N$: $N > M$ iff $N-M$ is positive definite, and $N \geq M$ iff $N-M$ is positive semi-definite.

\section{Several classes of block smoothers}\label{sec:classes}

In the following, we introduce several classes of iterative methods as candidates for the smoothing procedure and discuss their relations. Here and in the next section, we drop the index $\ell$ identifying the level of discretization for sake of simplicity.

For the smoothing procedure, we consider iterative methods of the form \eqref{PA}, where $\mathcal{P}$ is addressed as preconditioner. In particular, we discuss preconditioners with the same block structure as the system matrix $\mathcal{A}$  and follow a similar classification as given in \cite{notay14}.

The first three classes are indefinite block {\em diagonal}, {\em lower} and {\em upper} block triangular preconditioners
\[
  \mathcal{P}_d 
   = \begin{bmatrix}
       \hat{A} & 0 \\ 0 & -\hat{S} 
     \end{bmatrix}, \quad
  \mathcal{P}_\ell 
   = \begin{bmatrix}
       \hat{A} & 0 \\ B & -\hat{S} 
     \end{bmatrix}
  \quad \text{and} \quad
  \mathcal{P}_u 
   = \mathcal{P}_\ell^\top 
   = \begin{bmatrix}
       \hat{A}^\top & B^\top \\ 0 & -\hat{S}^\top 
     \end{bmatrix}.
\]
Here, $\hat{A}$ and $\hat{S}$ denote non-singular approximations of $A$ and the Schur complement $S = C + B A^{-1} B^\top$, respectively. In the context of a smoothing procedure for multigrid methods we typically think of approximations $\hat{A}$ and $\hat{S}$ representing simple iterative methods like the Richardson method, the Jacobi or the forward, backward or symmetric Gauss--Seidel methods, although there is no special restriction of this kind in the forthcoming smoothing analysis.

The iterative method \eqref{PA} with $\mathcal{P} = \mathcal{P}_\ell$ is called an inexact Uzawa method. 
The choice $\mathcal{P} = \mathcal{P}_u$ in \eqref{PA} corresponds to the associated adjoint iterative method, see, e.g., \cite{hackbusch94} for the general concept of adjoint iterative methods. 

Motivated by the exact block factorization of $\mathcal{A}$ for non-singular $A$ given by
\begin{equation} \label{LDU}
  \mathcal{A}
   = \begin{bmatrix}
       I & 0 \\ B A^{-1} & I 
     \end{bmatrix}
     \begin{bmatrix}
       A & 0 \\ 0 & -S 
     \end{bmatrix}     
     \begin{bmatrix}
       I & A^{-1} B^\top \\ 0 & I 
     \end{bmatrix},
\end{equation}
with the same approximations $\hat{A}$ for $A$ in the block triangular and the block diagonal terms, leads to the fourth class of block approximate {\em factorization}  preconditioners of the form
\[
  \mathcal{P}_f 
   = \begin{bmatrix}
       I & 0 \\ B \hat{A}^{-1} & I 
     \end{bmatrix}
     \begin{bmatrix}
       \hat{A} & 0 \\ 0 & -\hat{S} 
     \end{bmatrix}     
     \begin{bmatrix}
       I & \hat{A}^{-1} B^\top \\ 0 & I 
     \end{bmatrix}.
\]
Using instead different approximations for the block triangular and the block diagonal terms in \eqref{LDU}, motivates the class of {\em symmetric} block preconditioners 
\[
  \mathcal{P}_s 
   = \begin{bmatrix}
       I & 0 \\ B \hat{A}^{-1} & I 
     \end{bmatrix}
     \begin{bmatrix}
       \hat{A}_s & 0 \\ 0 & - \hat{S} 
     \end{bmatrix}     
     \begin{bmatrix}
       I & \hat{A}^{-\top} B^\top \\ 0 & I 
     \end{bmatrix} 
    \quad\text{with}\quad
   \hat{A}_s = \hat{A} (\hat{A} + \hat{A}^\top - A)^{-1} \hat{A}^\top.
\]

Notice, the last class can be also motivated by the symmetric preconditioner $\mathcal{P}_\text{sym}$, resulting from the product iteration of the inexact Uzawa method and its adjoint iterative method, which is given by the condition
\[
  I - \mathcal{P}_\text{sym}^{-1} \mathcal{A} 
   = (I - \mathcal{P}_u^{-1} \mathcal{A})(I - \mathcal{P}_\ell^{-1} \mathcal{A}) ,
\]
and is equivalent to
\[
  \mathcal{P}_\text{sym} = \mathcal{P}_\ell (\mathcal{P}_\ell + \mathcal{P}_u - \mathcal{A})^{-1} \mathcal{P}_u ,
\]
see, e.g., \cite{hackbusch94} for the general concept of symmetric iterative methods. The symmetric preconditioner can then be expressed by
\begin{equation*}
  \mathcal{P}_\text{sym}
   = \begin{bmatrix}
       I & 0 \\ B \hat{A}^{-1} & I 
     \end{bmatrix}
     \begin{bmatrix}
       \hat{A}_s & 0 \\ 0 & -\hat{S}_s 
     \end{bmatrix}     
     \begin{bmatrix}
       I & \hat{A}^{-\top} B^\top \\ 0 & I 
     \end{bmatrix},
\end{equation*}
with $\hat{A}_s = \hat{A} (\hat{A} + \hat{A}^\top - A)^{-1} \hat{A}^\top$ and $\hat{S}_s = \hat{S} (\hat{S} + \hat{S}^\top - C)^{-1} \hat{S}^\top$. 
Observe that $\mathcal{P}_\text{sym}$ does not explicitly depend on the matrix $\hat{S}$ but only on $\hat{S}_s$. Replacing $\hat{S}_s$ by a general approximation (with a slight abuse of notation again denoted by) $\hat{S}$ leads to $\mathcal{P}_s$, which in this sense is more general than $\mathcal{P}_\text{sym}$. 

Observe,  all five classes $\mathcal{P}_d$, $\mathcal{P}_\ell$, $\mathcal{P}_u$, $\mathcal{P}_f$, and $\mathcal{P}_s$ for $\mathcal{A}$ depend on the choice of the two matrices $\hat{A}$ and $\hat{S}$. When necessary, we will explicitly express this by using $\hat{A}$ and $\hat{S}$ as arguments. In particular, we will use the notations $\mathcal{P}_\ell(\hat{A},\hat{S})$ and  $\mathcal{P}_s(\hat{A},\hat{S})$. Throughout the paper it is assumed for all five classes that:
\begin{itemize}
\item $\hat{A}$ is non-singular and $\hat{S}$ is symmetric and positive definite,
\item in the case of $\mathcal{P}_s$ we assume additionally that
\[
  \hat{A} + \hat{A}^\top > A,
\]
which ensures that $\hat{A}_s$ is symmetric and positive definite. 
\item in case of $\mathcal{P}_d$, $\mathcal{P}_\ell$, $\mathcal{P}_u$, and $\mathcal{P}_f$ we additionally assume that $\hat{A}$ is symmetric and positive definite.
\end{itemize}

\subsection{Relation of the smoothers}
Let us briefly discuss the relation of the iterative method \eqref{PA} for the different choices of $\mathcal{P}$, as previously introduced. For the computational efficiency of a multigrid method, low computational costs for the smoothing procedure are an essential factor. Besides the evaluation of the residual which, in each step, roughly costs one matrix-vector multiplication for each of the matrices $A$, $B$, $B^\top$ and $C$, the additional costs are different for the different classes. One step of \eqref{PA} with $\mathcal{P} = \mathcal{P}_d$ reads
\begin{align*}
   \ub_{k+1} & = \ub_k + \hat{A}^{-1}(\fb - A \ub_k - B^\top \pb_k), \\
   \pb_{k+1} & = \pb_k - \hat{S}^{-1}(\gb - B \ub_k + C \pb_k ),
\end{align*}
and one step of the inexact Uzawa method $\mathcal{P} = \mathcal{P}_\ell$ reads
\begin{align*}
   \ub_{k+1} & = \ub_k + \hat{A}^{-1}(\fb - A \ub_k - B^\top \pb_k), \\
   \pb_{k+1} & = \pb_k - \hat{S}^{-1}(\gb - B \ub_{k+1} + C \pb_k).
\end{align*}
Consequently, the computational costs of $\mathcal{P}_\ell$ are roughly identical with those of the block diagonal version $\mathcal{P}_d$. The same is true for the adjoint inexact Uzawa method $\mathcal{P} = \mathcal{P}_u$. Let us consider now $\mathcal{P} = \mathcal{P}_f$, where one step of the method reads
\begin{align*}
   \ub_\ast    & = \ub_k + \hat{A}^{-1}(\fb - A \ub_k - B^\top \pb_k), \\
   \pb_{k+1} & = \pb_k - \hat{S}^{-1}(\gb - B \ub_\ast + C \pb_k), \\
   \ub_{k+1} & = \ub_k + \hat{A}^{-1}(\fb - A \ub_k - B^\top \pb_{k+1}).
\end{align*}
Note that the computation can be arranged in such a way that, compared to the block diagonal and block triangular version, it costs one extra matrix-vector multiplication with $\hat{A}^{-1}$. Finally, one step for $\mathcal{P} = \mathcal{P}_s$ reads
\begin{align*}
   \ub_\ast    & = \ub_k + \hat{A}^{-1}(\fb - A \ub_k - B^\top \pb_k), \\
   \pb_{k+1} & = \pb_k - \hat{S}^{-1}(\gb - B \ub_\ast + C \pb_k), \\
   \ub_{k+1} & = \ub_\ast + \hat{A}^{-\top}(\fb - A \ub_\ast - B^\top \pb_{k+1}).
\end{align*}
In contrast to the case $\mathcal{P} = \mathcal{P}_f$ the intermediate value $\ub_\ast$ enters the third line.

At first sight it might look that $\mathcal{P}_s$ is the most costly scheme. Let us discuss this in more detail in the following remark.

\begin{remark}\label{relation_Pl_Ps}
We consider $\hat{A}^\top$ and $\hat{A}_s$, e.g. the associated matrices of the backward- and the symmetric Gauss--Seidel method. Several compositions of $\mathcal{P}_\ell(\hat{A}_s, \hat{S})$ and $\mathcal{P}_s(\hat{A}^\top, \hat{S})$ differ only in the first and last step of the application by $\hat{A}$, see \cref{F:smoother} for illustration. This suggests, that the methods are closely related to each other. We will employ this fact in the analysis of the smoothing property.

Moreover, in the implementation of $\mathcal{P}_s(\hat{A}^\top, \hat{S})$ the calculation of $\fb - B^\top \pb_{k+1}$ in the residual for the next iteration can be avoided and thus the computational cost can be reduced. Note, this idea can be also applied in the case of $\mathcal{P} = \mathcal{P}_f$.
\end{remark}

\renewcommand{\arraystretch}{0.3}
\begin{figure}[h!]	
\centering
\def\cc{\cellcolor[gray]{0.95}}
\def\len{1.5em}
\begin{tabular}{p{\len}p{\len}p{\len}p{\len}p{\len}p{\len}p{\len}p{\len}p{\len}p{\len}p{\len}p{\len}p{\len}p{\len}}
\cline{2-7}\cline{9-11}\\[-0.75ex]
&\multicolumn{1}{|c}{\cc} & \cc & \multicolumn{1}{c|}{\cc} & \cc & \cc & \multicolumn{1}{c|}{\cc} & &\multicolumn{1}{|c}{\cc} & \cc & \multicolumn{1}{c|}{\cc}\\[-0.5ex]
&\multicolumn{1}{|c}{\cc$\hat{A}$} & \cc$\hat{A}^\top$ & \multicolumn{1}{c|}{\cc$\hat{S}$} & \cc$\hat{A}$ & \cc$\hat{A}^\top$ & \multicolumn{1}{c|}{\cc$\hat{S}$} & $\cdots$ & \multicolumn{1}{|c}{\cc$\hat{A}$} & \cc$\hat{A}^\top$ & \multicolumn{1}{c|}{\cc$\hat{S}$}\\[0.125ex] \cline{2-7}\cline{9-11}\\ [-1.2ex]
\multicolumn{5}{c}{$\underbrace{\hspace*{8em}}_{\displaystyle\mathcal{P}_\ell(\hat{A}_s, \hat{S})=\hspace*{5em}}$} \\[-2ex]
& \multicolumn{5}{c}{$\overbrace{\hspace*{8em}}^{\displaystyle\hspace*{5em}=\mathcal{P}_s(\hat{A}^\top, \hat{S})}$} \\[-0.1ex]
\cline{3-8}\cline{10-12}\\[-0.75ex] 
& & \multicolumn{1}{|c}{\cc} & \cc & \multicolumn{1}{c|}{\cc} & \cc & \cc & \multicolumn{1}{c|}{\cc} & &\multicolumn{1}{|c}{\cc} & \cc & \multicolumn{1}{c|}{\cc}\\[-0.5ex]
& & \multicolumn{1}{|c}{\cc$\hat{A}^\top$} & \cc$\hat{S}$ & \multicolumn{1}{c|}{\cc$\hat{A}$} & \cc$\hat{A}^\top$ & \cc$\hat{S}$ & \multicolumn{1}{c|}{\cc$\hat{A}$} & $\cdots$ & \multicolumn{1}{|c}{\cc$\hat{A}^\top$} & \cc$\hat{S}$ & \multicolumn{1}{c|}{\cc$\hat{A}$} & \\[0.125ex] \cline{3-8}\cline{10-12}\\ [1ex] 
\end{tabular}
\caption{Sequence of matrices whose inverse are applied for $\nu$ steps, illustrating the relation of $\mathcal{P}_\ell(\hat{A}_s, \hat{S})$ and $\mathcal{P}_s(\hat{A}^\top, \hat{S})$.}
\label{F:smoother}
\end{figure}

\renewcommand{\arraystretch}{1.0}
\begin{remark}
For solving saddle point problems also the Braess--Sarazin smoother \textnormal{\cite{braess-sarazin_1997}} is applicable, which is defined by
\begin{align*}
\mathcal{P}_{BS} = 
\begin{bmatrix}
D_\alpha & B^\top \\
B & -C
\end{bmatrix},
\end{align*}
where $D_\alpha = \alpha\, \diag A$ with $\alpha \in \mathbb{R}$. One step of the iterative scheme \eqref{PA} with $\mathcal{P} = \mathcal{P}_{BS}$ then reads
\begin{align*}
   \ub_\ast    & = \ub_k + D_\alpha^{-1}(\fb - A \ub_k - B^\top \pb_k), \\
   \pb_{k+1} & = \pb_k - (C + B D_\alpha^{-1} B^\top)^{-1}(\gb - B \ub_\ast + C \pb_k), \\
   \ub_{k+1} & = \ub_k + D_\alpha^{-1}(\fb - A \ub_k - B^\top \pb_{k+1}).
\end{align*}
Thus this method is a special variant of the block approximate factorization scheme with $\mathcal{P} = \mathcal{P}_f$, where $\hat{A} = D_\alpha$ and $\hat{S} = C + B D_\alpha^{-1} B^\top$.
\end{remark}

\section{The smoothing property}\label{sec:smoothing_property}

In this section, we present a unified analysis of the smoothing property \eqref{smoothProp} for the individual schemes. We do so by using a mesh-dependent norm, which is represented by a symmetric and positive definite matrix $\mathcal{L}$. Recall the index $\ell$, indicating the level of discretization is neglected.

Firstly, we introduce the iteration matrix (or error propagation matrix) associated with the iterative method \eqref{PA} by
\[
  \mathcal{M}
     = I - \mathcal{P}^{-1} \mathcal{A}.
\]
In particular, for the five classes, the corresponding iteration matrices are denoted by $\mathcal{M}_d$, $\mathcal{M}_\ell$, $\mathcal{M}_u$, $\mathcal{M}_f$, and $\mathcal{M}_s$. 

For some matrix $\mathcal{K} \in \mathbb{R}^{(n+m) \times (n+m)}$, we denote the norm of the bilinear $\langle \mathcal{K} \zb,\yb \rangle$ by
\[
  \|\mathcal{K}\|_{\mathcal{L} \times \mathcal{L}} 
    = \sup_{0 \neq \zb,\yb \in \mathbb{R}^{n+m}} \frac{\langle\mathcal{K} \zb,\yb\rangle}{\|\zb\|_\mathcal{L} \|\yb\|_\mathcal{L}}.
\]
Using \eqref{discr0} and \eqref{discr2} the smoothing property \eqref{smoothProp} can be rewritten in matrix-vector notation as
\[
  \|\mathcal{A} \mathcal{M}^\nu\|_{\mathcal{L} \times \mathcal{L}} \le c_S \, \eta(\nu).
\]

The following lemmata provide helpful  estimates for the forthcoming analysis of the smoothing property.

\begin{lemma}\label{first}
Let  $\widebar{\mathcal{M}} = \mathcal{X} \mathcal{M} \mathcal{X}^{-1}$
for some non-singular matrix $\mathcal{X}$. Then there hold the estimates
\begin{equation} \label{firstestimate}
  \|\mathcal{A} \mathcal{M}^\nu\|_{\mathcal{L} \times \mathcal{L}}
    \le \|(I - \widebar{\mathcal{M}}) \widebar{\mathcal{M}}^\nu \|
        \, \|\mathcal{X}^{-\top} \mathcal{P} \mathcal{X}^{-1} \| 
        \, \|\mathcal{X}^\top \mathcal{X} \|_{\mathcal{L} \times \mathcal{L}},
\end{equation}
and
\begin{equation} \label{secondestimate}
  \|\mathcal{A} \mathcal{M}^\nu\|_{\mathcal{L} \times \mathcal{L}}
    \le \|(I - \widebar{\mathcal{M}}) \widebar{\mathcal{M}}^{\nu-1} \|
        \, \|\mathcal{X}^{-\top} (\mathcal{P} - \mathcal{A}) \mathcal{X}^{-1} \| 
        \, \|\mathcal{X}^\top \mathcal{X} \|_{\mathcal{L} \times \mathcal{L}}  .
\end{equation}
\end{lemma}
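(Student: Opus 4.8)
The plan is to rewrite everything in terms of the conjugated iteration matrix $\widebar{\mathcal{M}}$ and to insert the factor $\mathcal{A}$ by expressing it through $\mathcal{P}$ and $\mathcal{M}$. The starting identity is $\mathcal{A} = \mathcal{P}(I - \mathcal{M})$, which follows directly from the definition $\mathcal{M} = I - \mathcal{P}^{-1}\mathcal{A}$. Hence $\mathcal{A}\mathcal{M}^\nu = \mathcal{P}(I - \mathcal{M})\mathcal{M}^\nu$. Now I would conjugate: since $\widebar{\mathcal{M}} = \mathcal{X}\mathcal{M}\mathcal{X}^{-1}$, we have $\mathcal{M}^\nu = \mathcal{X}^{-1}\widebar{\mathcal{M}}^\nu\mathcal{X}$ and $(I-\mathcal{M}) = \mathcal{X}^{-1}(I - \widebar{\mathcal{M}})\mathcal{X}$, so that
\[
  \mathcal{A}\mathcal{M}^\nu = \mathcal{P}\,\mathcal{X}^{-1}\,(I-\widebar{\mathcal{M}})\,\widebar{\mathcal{M}}^\nu\,\mathcal{X}.
\]

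The second ingredient is to estimate the $\mathcal{L}\times\mathcal{L}$-norm of a product by splitting off the $\mathcal{X}$ and $\mathcal{X}^{-1}$ factors so that the "core" $(I-\widebar{\mathcal{M}})\widebar{\mathcal{M}}^\nu$ is measured in the Euclidean norm. Concretely, for any $\mathcal{K}$ one has $\langle \mathcal{K}\zb,\yb\rangle$ and I would write $\mathcal{A}\mathcal{M}^\nu = (\mathcal{X}^\top)\,\big(\mathcal{X}^{-\top}\mathcal{P}\mathcal{X}^{-1}\big)\,\big((I-\widebar{\mathcal{M}})\widebar{\mathcal{M}}^\nu\big)\,\mathcal{X}$, then for test vectors $\zb,\yb$ evaluate $\langle \mathcal{A}\mathcal{M}^\nu\zb,\yb\rangle = \langle (\mathcal{X}^{-\top}\mathcal{P}\mathcal{X}^{-1})(I-\widebar{\mathcal{M}})\widebar{\mathcal{M}}^\nu\mathcal{X}\zb,\ \mathcal{X}\yb\rangle$. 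Cauchy--Schwarz in the Euclidean inner product, together with submultiplicativity of the spectral norm, gives
\[
  |\langle \mathcal{A}\mathcal{M}^\nu\zb,\yb\rangle| \le \|\mathcal{X}^{-\top}\mathcal{P}\mathcal{X}^{-1}\|\;\|(I-\widebar{\mathcal{M}})\widebar{\mathcal{M}}^\nu\|\;\|\mathcal{X}\zb\|\,\|\mathcal{X}\yb\|,
\]
and since $\|\mathcal{X}\zb\|^2 = \langle \mathcal{X}^\top\mathcal{X}\zb,\zb\rangle \le \|\mathcal{X}^\top\mathcal{X}\|_{\mathcal{L}\times\mathcal{L}}\,\|\zb\|_{\mathcal{L}}^2$ (and likewise for $\yb$), dividing by $\|\zb\|_{\mathcal{L}}\|\yb\|_{\mathcal{L}}$ and taking the supremum yields \eqref{firstestimate}.

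For \eqref{secondestimate} the only change is in how the factor $\mathcal{A}$ is absorbed: instead of $\mathcal{A} = \mathcal{P}(I-\mathcal{M})$, use $\mathcal{A} = \mathcal{P} - \mathcal{P}\mathcal{M} = (\mathcal{P}-\mathcal{A})\big((I-\mathcal{M})\mathcal{M}^{-1}\big)$ whenever $\mathcal{M}$ is invertible — more safely, write $\mathcal{A}\mathcal{M}^{\nu} = (\mathcal{P}-\mathcal{A})\,(I-\mathcal{M})\,\mathcal{M}^{\nu-1}$, which holds because $\mathcal{P}-\mathcal{A} = \mathcal{P}\mathcal{M}$ and so $(\mathcal{P}-\mathcal{A})(I-\mathcal{M})\mathcal{M}^{\nu-1} = \mathcal{P}\mathcal{M}(I-\mathcal{M})\mathcal{M}^{\nu-1} = \mathcal{P}(I-\mathcal{M})\mathcal{M}^{\nu} = \mathcal{A}\mathcal{M}^\nu$ using that $\mathcal{M}$ commutes with $I-\mathcal{M}$. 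Conjugating as before and running the identical Cauchy--Schwarz/norm-splitting argument gives \eqref{secondestimate}.

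I do not expect a serious obstacle here; the argument is essentially bookkeeping with matrix identities and the definition of the bilinear-form norm $\|\cdot\|_{\mathcal{L}\times\mathcal{L}}$. The one point requiring a little care is the placement of transposes: one must check that the adjoint of $\mathcal{X}^{-1}$ relative to the Euclidean pairing is $\mathcal{X}^{-\top}$, so that moving $\mathcal{X}$ from the left argument of the inner product produces exactly the factor $\mathcal{X}^{-\top}\mathcal{P}\mathcal{X}^{-1}$ (resp. $\mathcal{X}^{-\top}(\mathcal{P}-\mathcal{A})\mathcal{X}^{-1}$) that appears in the statement, and that the leftover $\mathcal{X}$'s recombine into $\mathcal{X}^\top\mathcal{X}$ acting on both test vectors. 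The secondary subtlety is the commutation $\mathcal{M}(I-\mathcal{M}) = (I-\mathcal{M})\mathcal{M}$ used for \eqref{secondestimate}, which is immediate but should be stated.
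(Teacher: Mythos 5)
Your proof is correct and follows essentially the same route as the paper: both start from $\mathcal{A}=\mathcal{P}(I-\mathcal{M})$ (resp.\ $\mathcal{A}\mathcal{M}=(\mathcal{P}-\mathcal{A})(I-\mathcal{M})$ for \eqref{secondestimate}), conjugate by $\mathcal{X}$ to expose the core factor $(I-\widebar{\mathcal{M}})\widebar{\mathcal{M}}^\nu$, apply Cauchy--Schwarz in the Euclidean pairing, and close with $\|\mathcal{X}\wb\|^2=\langle\mathcal{X}^\top\mathcal{X}\wb,\wb\rangle\le\|\mathcal{X}^\top\mathcal{X}\|_{\mathcal{L}\times\mathcal{L}}\|\wb\|_{\mathcal{L}}^2$. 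The only cosmetic difference is where you absorb the $\mathcal{P}$ factor (the paper moves $\mathcal{X}^{-\top}\mathcal{P}^\top\mathcal{X}^{-1}$ to the right argument and then invokes $\|\mathcal{K}\|=\|\mathcal{K}^\top\|$, whereas you keep $\mathcal{X}^{-\top}\mathcal{P}\mathcal{X}^{-1}$ on the left), which is an equivalent bookkeeping choice.
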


\begin{proof}
Since $\mathcal{A} = \mathcal{P}(I - \mathcal{M})$, it follows that
\[
  \mathcal{A} \mathcal{M}^\nu
    = \mathcal{P}(I - \mathcal{M}) \mathcal{M}^\nu
    = \mathcal{P}\mathcal{X}^{-1}(I - \widebar{\mathcal{M}})
        \widebar{\mathcal{M}}^\nu \mathcal{X} .
\]
Therefore, for all $\zb,\yb \in \mathbb{R}^{n+m}$ holds
\begin{align*}
  \langle \mathcal{A} \mathcal{M}^\nu \zb,\yb \rangle
   & = \langle \mathcal{P}\mathcal{X}^{-1}(I - \widebar{\mathcal{M}})
        \widebar{\mathcal{M}}^\nu \mathcal{X} \zb,\yb \rangle
     = \langle (I - \widebar{\mathcal{M}})
        \widebar{\mathcal{M}}^\nu \mathcal{X} \zb, \mathcal{X}^{-\top} \mathcal{P}^\top \mathcal{X}^{-1} \mathcal{X} \yb \rangle \\
   & \le  \|(I - \widebar{\mathcal{M}})
        \widebar{\mathcal{M}}^\nu \| \, \| \mathcal{X} \zb \| \, \|\mathcal{X}^{-\top} \mathcal{P}^\top \mathcal{X}^{-1} \| \, \|\mathcal{X} \yb\|.
\end{align*}
The rest of the proof of \eqref{firstestimate} follows directly from $\|\mathcal{K}\| = \|\mathcal{K}^\top\|$ and the estimate
\[
  \|\mathcal{X} \wb\|^2 
    =  \langle \mathcal{X}^\top\mathcal{X} \wb,\wb\rangle 
   \le \|\mathcal{X}^\top\mathcal{X}\|_{\mathcal{L} \times \mathcal{L}} \, \|\wb\|_\mathcal{L}^2.
\]
The second estimate \eqref{secondestimate} is obtained in the same way as before with the identity $\mathcal{A} \mathcal{M} = (\mathcal{P} - \mathcal{A}) (I - \mathcal{M})$.
\end{proof}

Observe that the first two factors on the right-hand sides of \eqref{firstestimate} and \eqref{secondestimate} do not dependent on the specific norm represented by $\mathcal{L}$. Only the third factor depends on the particular choice of $\mathcal{L}$. For estimating the first factor the following abstract result is helpful, see \cite[Theorem~2]{schoeberl_2003} for the proof.

\begin{theorem} \label{AR1}
Assume that a matrix $\mathcal{K}  \in \mathbb{R}^{(n+m)\times(n+m)}$ is of the form
\[
  \mathcal{K} = \mathcal{Q}^\top \mathcal{N} \mathcal{Q}
  \quad \text{with} \quad
  \mathcal{N} = \begin{bmatrix} I & - V^\top \\ V & I \end{bmatrix}^{-1},
\]
for some matrices $V \in \mathbb{R}^{m\times n}$ and $\mathcal{Q} \in \mathbb{R}^{(n+m)\times(n+m)}$ with $\|\mathcal{Q}\| \le 1$.
Then there holds
\[
  \|(I - \mathcal{K}) \mathcal{K}^\nu \| \le \eta(\nu),
\]
with
\[
  \eta(\nu) = \frac{1}{2^\nu} \binom{\nu}{ \lfloor (\nu+1)/2 \rfloor}
    \le \left\{ \begin{array}{ll}
                  \sqrt{\displaystyle
		        \frac{2}{\pi \nu}}  & \text{for even }  \nu ,\\
                  \sqrt{\displaystyle
		        \frac{2}{\pi (\nu+1)}}  & \text{for odd } \nu ,
	       \end{array} \right. 
\]
where $\binom{\nu}{\ell}$ denotes the binomial coefficient and $\lfloor x \rfloor$ denotes the largest integer smaller than or equal to $x \in \mathbb{R}$.
\end{theorem}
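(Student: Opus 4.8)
The plan is to reduce the claim to a statement about a single contraction and then to a scalar estimate for binomial coefficients, in the spirit of \cite[Theorem~2]{schoeberl_2003}.

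\emph{Step 1: the structural identity for $\mathcal{N}$.} The only property of $\mathcal{N}$ I would use is that the symmetric part of $\mathcal{N}^{-1}$ equals the identity: since $\mathcal{N}^{-1}=\begin{bmatrix} I & -V^\top\\ V & I\end{bmatrix}$ one has $\mathcal{N}^{-1}+\mathcal{N}^{-\top}=2I$, and multiplying by $\mathcal{N}^\top$ on the left and by $\mathcal{N}$ on the right gives
\[
 \mathcal{N}+\mathcal{N}^\top=2\,\mathcal{N}^\top\mathcal{N}.
\]
(In particular $\mathcal{N}$ is normal and its spectrum lies on the circle $\{\,|z-\tfrac12|=\tfrac12\,\}$, which already explains the shape of the bound.)

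\emph{Step 2: showing that $\mathcal{S}:=I-2\mathcal{K}$ is a contraction.} This is the conceptual heart, and the step I expect to be the main obstacle — everything else is bookkeeping. Using $\mathcal{K}=\mathcal{Q}^\top\mathcal{N}\mathcal{Q}$ together with Step~1 one gets $\mathcal{K}+\mathcal{K}^\top=2(\mathcal{N}\mathcal{Q})^\top(\mathcal{N}\mathcal{Q})$, whereas $\mathcal{K}^\top\mathcal{K}=(\mathcal{N}\mathcal{Q})^\top(\mathcal{Q}\mathcal{Q}^\top)(\mathcal{N}\mathcal{Q})$; since $\|\mathcal{Q}\|\le1$ forces $\mathcal{Q}\mathcal{Q}^\top\le I$, this yields $2\,\mathcal{K}^\top\mathcal{K}\le\mathcal{K}+\mathcal{K}^\top$, hence
\[
 \mathcal{S}^\top\mathcal{S}=I-2\bigl(\mathcal{K}+\mathcal{K}^\top-2\,\mathcal{K}^\top\mathcal{K}\bigr)\le I,
\]
i.e. $\|\mathcal{S}\|\le1$. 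Note that $\|\mathcal{Q}\|\le1$ is the only input needed about $\mathcal{Q}$, and that we have thereby rewritten $\mathcal{K}=\tfrac12(I-\mathcal{S})$ in a Cayley-type form with $\mathcal{S}$ a contraction.

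\emph{Step 3: the polynomial estimate and the closed form of $\eta$.} With $\mathcal{K}=\tfrac12(I-\mathcal{S})$ and $I-\mathcal{K}=\tfrac12(I+\mathcal{S})$ I would expand
\[
 (I-\mathcal{K})\,\mathcal{K}^\nu=\frac{1}{2^{\nu+1}}(I+\mathcal{S})(I-\mathcal{S})^\nu
  =\frac{1}{2^{\nu+1}}\sum_{j=0}^{\nu+1}(-1)^j\Bigl(\tbinom{\nu}{j}-\tbinom{\nu}{j-1}\Bigr)\mathcal{S}^j,
\]
with the convention $\binom{\nu}{-1}=\binom{\nu}{\nu+1}=0$. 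Taking spectral norms, using $\|\mathcal{S}^j\|\le\|\mathcal{S}\|^j\le1$, and telescoping the coefficient sum — the finite sequence $0,\binom{\nu}{0},\dots,\binom{\nu}{\nu},0$ is unimodal with maximum $\binom{\nu}{\lfloor(\nu+1)/2\rfloor}$, so $\sum_j\bigl|\binom{\nu}{j}-\binom{\nu}{j-1}\bigr|=2\binom{\nu}{\lfloor(\nu+1)/2\rfloor}$ — gives $\|(I-\mathcal{K})\mathcal{K}^\nu\|\le\frac{1}{2^\nu}\binom{\nu}{\lfloor(\nu+1)/2\rfloor}=\eta(\nu)$. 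Finally, the two explicit bounds on $\eta(\nu)$ follow from the classical central-binomial estimate $\binom{2k}{k}\le 4^k/\sqrt{\pi k}$ (Wallis/Stirling), applied with $\nu=2k$ when $\nu$ is even and, after writing $\binom{2k+1}{k+1}=\tfrac12\binom{2k+2}{k+1}$, with $\nu=2k+1$ when $\nu$ is odd.
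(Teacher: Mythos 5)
Your proof is correct, and it follows the same Cayley-transform strategy as the cited reference \cite[Lemma~2 and Theorem~2]{schoeberl_2003} to which the paper delegates the proof: show that $\mathcal{S}=I-2\mathcal{K}$ is a contraction by exploiting the identity $\mathcal{N}+\mathcal{N}^\top=2\,\mathcal{N}^\top\mathcal{N}$ together with $\mathcal{Q}\mathcal{Q}^\top\le I$, and then bound the coefficient sum of the polynomial $\tfrac{1}{2^{\nu+1}}(1+x)(1-x)^\nu$ by telescoping the unimodal binomial sequence. All the algebra checks out, including the reduction $\binom{2k+1}{k+1}=\tfrac12\binom{2k+2}{k+1}$ used for the odd case of the explicit $\sqrt{2/(\pi\nu)}$-type bound.
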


The function $\eta(\nu)$ represents the smoothing rate. In the following, we present sufficient conditions for the assumptions on the matrix $\mathcal{K}$, which makes the results of \cref{AR1} more directly accessible than in \cite{schoeberl_2003}.

\begin{corollary} \label{AR}
Assume that a matrix $\mathcal{K}$ is of the form
\[
  \mathcal{K} = \begin{bmatrix} M & R^\top \\ - R & N \end{bmatrix},
\]
with symmetric and positive definite matrices $M$ and $N$.
If 
\[
  \lambda_{max}(S_M) \le 1
  \quad \text{and} \quad
  \lambda_{max}(S_N) \le 1,
\]
for the primal and dual Schur complements
\[
  S_M = M + R^\top N^{-1} R
  \quad \text{and} \quad
  S_N = N + R M^{-1} R^\top,
\]
then there holds with the same $\eta$ as in \cref{AR1} the estimate
\[
  \|(I - \mathcal{K}) \mathcal{K}^\nu \| \le \eta(\nu).
\]
\end{corollary}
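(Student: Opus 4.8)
The plan is to put $\mathcal{K}$ into the normal form required by \cref{AR1}: I want to exhibit a matrix $\mathcal{Q}$ with $\|\mathcal{Q}\|\le 1$ and a matrix $V\in\mathbb{R}^{m\times n}$ such that
\[
  \mathcal{K}=\mathcal{Q}^\top\mathcal{N}\mathcal{Q},\qquad
  \mathcal{N}=\begin{bmatrix} I & -V^\top\\ V & I\end{bmatrix}^{-1},
\]
after which the asserted bound $\|(I-\mathcal{K})\mathcal{K}^\nu\|\le\eta(\nu)$ is exactly the conclusion of \cref{AR1}. Since $M$ and $N$ are symmetric positive definite and $R^\top N^{-1}R$, $RM^{-1}R^\top$ are positive semidefinite, the Schur complements $S_M$ and $S_N$ are symmetric positive definite; in particular $S_M^{1/2}$ and $S_N^{1/2}$ exist and are invertible. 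The natural choice is the block-diagonal scaling $\mathcal{Q}=\diag(S_M^{1/2},S_N^{1/2})$, for which $\|\mathcal{Q}\|=\max\{\lambda_{max}(S_M),\lambda_{max}(S_N)\}^{1/2}$, and this is $\le 1$ precisely under the hypotheses of the corollary.

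It then remains to check that $\mathcal{Q}^{-\top}\mathcal{K}\mathcal{Q}^{-1}$ has the required form, equivalently that $\mathcal{Q}\,\mathcal{K}^{-1}\mathcal{Q}^\top=\begin{bmatrix} I & -V^\top\\ V & I\end{bmatrix}$ for a suitable $V$. I would compute $\mathcal{K}^{-1}$ by block elimination: the ordinary Schur complement of the $M$-block of $\mathcal{K}$ is $N+RM^{-1}R^\top=S_N$, and the Sherman--Morrison--Woodbury identity $M^{-1}-M^{-1}R^\top S_N^{-1}RM^{-1}=(M+R^\top N^{-1}R)^{-1}=S_M^{-1}$ identifies the leading block, giving
\[
  \mathcal{K}^{-1}=\begin{bmatrix} S_M^{-1} & -M^{-1}R^\top S_N^{-1}\\ S_N^{-1}RM^{-1} & S_N^{-1}\end{bmatrix}
\]
(which in passing shows $\mathcal{K}$ is nonsingular). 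Conjugating with $\mathcal{Q}$ turns both diagonal blocks into $I$ and the off-diagonal blocks into $\mp V^\top$ and $\pm V$ with $V:=S_N^{-1/2}RM^{-1}S_M^{1/2}$; symmetry of $M$ and $S_N$ gives $V^\top=S_M^{1/2}M^{-1}R^\top S_N^{-1/2}$, so the signs match and $\mathcal{Q}\,\mathcal{K}^{-1}\mathcal{Q}^\top=\begin{bmatrix} I & -V^\top\\ V & I\end{bmatrix}$ exactly. Hence $\mathcal{K}=\mathcal{Q}^\top\mathcal{N}\mathcal{Q}$ with $\mathcal{N}$ as above and $\|\mathcal{Q}\|\le1$, and \cref{AR1} finishes the proof.

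All the matrix manipulations are elementary; the one place that needs care is the bookkeeping in the middle step — recognizing that the $(1,1)$-block of $\mathcal{K}^{-1}$ is exactly $S_M^{-1}$ by the Woodbury identity (this is where the primal Schur complement enters, mirroring the dual one that appears as the ordinary Schur complement of $\mathcal{K}$), and verifying that the two off-diagonal blocks of $\mathcal{Q}\,\mathcal{K}^{-1}\mathcal{Q}^\top$ are indeed negatives of each other's transpose, so that the conjugated matrix genuinely has the antisymmetric off-diagonal structure demanded by \cref{AR1}.
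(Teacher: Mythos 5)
Your proof is correct and follows essentially the same route as the paper: invert $\mathcal{K}$ by block elimination so that the Schur complements $S_M^{-1}$ and $S_N^{-1}$ sit on the diagonal, conjugate by $\mathcal{Q}=\diag(S_M^{1/2},S_N^{1/2})$ to normalize the diagonal blocks, and invoke \cref{AR1} with $\|\mathcal{Q}\|\le 1$. You simply spell out the Woodbury step and the transpose check, which the paper leaves implicit.
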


\begin{proof}
Observe that the inverse of $\mathcal{K}$ has the representation
\[
\mathcal{K}^{-1} = 
\begin{bmatrix} S_M^{-1} & -W^\top \\ W & S_N^{-1} \end{bmatrix}
=
\begin{bmatrix} S_M^{-1/2} & 0 \\ 0 & S_N^{-1/2} \end{bmatrix}
\begin{bmatrix} I & -V^\top \\ V & I \end{bmatrix}
\begin{bmatrix} S_M^{-1/2} & 0 \\ 0 & S_N^{-1/2} \end{bmatrix},
\]
with $W = S_N^{-1} R M^{-1} = N^{-1} R S_M^{-1}$ and $V = S_N^{-1/2} R M^{-1} S_M^{1/2} = S_N^{1/2} N^{-1} R S_M^{-1/2}$. 
Consequently, the matrix $\mathcal{K}$ has the factorization
\[
  \mathcal{K} 
    = \begin{bmatrix} S_M^{1/2} & 0 \\ 0 & S_N^{1/2} \end{bmatrix}
      \begin{bmatrix} I & - V^\top \\ V & I \end{bmatrix}^{-1}
      \begin{bmatrix} S_M^{1/2} & 0 \\ 0 & S_N^{1/2} \end{bmatrix}.
\]
The rest of the proof follows then from \cref{AR1}.
\end{proof}

In the forthcoming subsections we present the analysis of the smoothing property for the individual smoothers, which relies on the results of \cref{AR1} and \cref{AR}.

\subsection{Block diagonal preconditioners}

For the iteration matrix in the case $\mathcal{P} = \mathcal{P}_d$ we obtain the following product representation
\[
  \mathcal{M}_d
    = \mathcal{P}_d^{-1} ( \mathcal{P}_d^{} - \mathcal{A} )
    = \begin{bmatrix} 
        \hat{A}^{-1} (\hat{A} - A) & -\hat{A}^{-1} B^\top \\ 
         \hat{S}^{-1} B & \hat{S}^{-1} (\hat{S} - C) 
       \end{bmatrix} 
     = \mathcal{X}_d^{-1} \widebar{\mathcal{M}}_d^{} \mathcal{X}_d^{},
\]
with
\begin{equation} \label{Qd}
  \mathcal{X}_d^{} = \begin{bmatrix} \hat{A}^{1/2} & 0 \\ 0 & \hat{S}^{1/2} \end{bmatrix}
  \quad \text{and} \quad
  \widebar{\mathcal{M}}_d^{} =
     \begin{bmatrix} 
       I - \hat{A}^{-1/2} A \hat{A}^{-1/2} & -\hat{A}^{-1/2} B^\top \hat{S}^{-1/2} \\ 
       \hat{S}^{-1/2} B \hat{A}^{-1/2} & I - \hat{S}^{-1/2} C \hat{S}^{-1/2} 
     \end{bmatrix} .
\end{equation}
This is helpful for obtaining the following result.

\begin{theorem} \label{thrm:sp_d}
Assume that $A$ and $C$ are symmetric and positive definite. 
If $\hat{A}$ and $\hat{S}$ are symmetric and positive definite matrices satisfying
\[
  \hat{A} \ge A + B^\top C^{-1} B
  \quad \text{and} \quad 
  \hat{S} \ge C + B A^{-1} B^\top,
\]
then there holds the smoothing property
\[
  \|\mathcal{A} (\mathcal{M}_d^{})^\nu\|_{\mathcal{L} \times \mathcal{L}} 
    \le \eta(\nu) \,
    \| \mathcal{D}_d \|_{\mathcal{L} \times \mathcal{L}}
  \quad \text{with} \quad
  \mathcal{D}_d = \begin{bmatrix} \hat{A} & 0 \\ 0 & \hat{S} \end{bmatrix}.
\]
\end{theorem}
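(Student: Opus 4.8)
The plan is to combine \cref{first} with \cref{AR} (the abstract smoothing estimate via primal/dual Schur complements). Concretely, I would apply the first estimate \eqref{firstestimate} with the similarity transform $\mathcal{X} = \mathcal{X}_d$ from \eqref{Qd}, so that $\widebar{\mathcal{M}} = \widebar{\mathcal{M}}_d$ as displayed before the statement. With this choice the middle factor becomes $\|\mathcal{X}_d^{-\top}\mathcal{P}_d\mathcal{X}_d^{-1}\|$; since $\mathcal{P}_d = \diag(\hat{A},-\hat{S})$ and $\mathcal{X}_d = \diag(\hat{A}^{1/2},\hat{S}^{1/2})$, this is $\|\diag(I,-I)\| = 1$. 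The third factor is $\|\mathcal{X}_d^\top\mathcal{X}_d\|_{\mathcal{L}\times\mathcal{L}} = \|\mathcal{D}_d\|_{\mathcal{L}\times\mathcal{L}}$, exactly the constant appearing in the claimed bound. So everything reduces to showing $\|(I-\widebar{\mathcal{M}}_d)\widebar{\mathcal{M}}_d^\nu\| \le \eta(\nu)$.

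For that last step I would set $\mathcal{K} = \widebar{\mathcal{M}}_d$ and verify the hypotheses of \cref{AR}. Reading off \eqref{Qd}, $\mathcal{K}$ has the block form with $M = I - \hat{A}^{-1/2}A\hat{A}^{-1/2}$, $N = I - \hat{S}^{-1/2}C\hat{S}^{-1/2}$, and $R = \hat{S}^{-1/2}B\hat{A}^{-1/2}$ (so $R^\top = \hat{A}^{-1/2}B^\top\hat{S}^{-1/2}$, matching the sign pattern $\left[\begin{smallmatrix} M & R^\top \\ -R & N\end{smallmatrix}\right]$). First, $M$ and $N$ are symmetric, and they are positive definite precisely because $\hat{A} > A$ and $\hat{S} > C$ — both of which follow from the hypotheses $\hat{A} \ge A + B^\top C^{-1}B$ and $\hat{S} \ge C + BA^{-1}B^\top$ together with positive definiteness of $B^\top C^{-1}B \ge 0$ and $BA^{-1}B^\top$; strictness needs a brief remark (if $B=0$ the problem decouples and the bounds read $\hat A\ge A$, $\hat S\ge C$ with $C,A$ SPD, still giving $M,N\ge 0$ which suffices — or one simply notes $\|(I-\mathcal K)\mathcal K^\nu\|\le\eta(\nu)$ holds in the limiting semidefinite case too). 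Then I need $\lambda_{max}(S_M)\le 1$ and $\lambda_{max}(S_N)\le 1$ for $S_M = M + R^\top N^{-1}R$ and $S_N = N + RM^{-1}R^\top$.

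The main computational step is exactly this Schur-complement bound, and I expect it to be the one mild obstacle. Take $S_N$: we have
\[
  S_N = I - \hat{S}^{-1/2}C\hat{S}^{-1/2} + \hat{S}^{-1/2}B\hat{A}^{-1/2}\bigl(I - \hat{A}^{-1/2}A\hat{A}^{-1/2}\bigr)^{-1}\hat{A}^{-1/2}B^\top\hat{S}^{-1/2}.
\]
Conjugating, $S_N \le I$ is equivalent to $C + B\hat{A}^{-1/2}(I - \hat{A}^{-1/2}A\hat{A}^{-1/2})^{-1}\hat{A}^{-1/2}B^\top \le \hat{S}$. A direct manipulation (writing $X = \hat{A}^{-1/2}A\hat{A}^{-1/2} < I$, so $\hat{A}^{-1/2}(I-X)^{-1}\hat{A}^{-1/2} = (\hat{A}-A)^{-1}$... actually more carefully $\hat A^{-1/2}(I-\hat A^{-1/2}A\hat A^{-1/2})^{-1}\hat A^{-1/2} = (\hat A - A)^{-1}$? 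No: $\hat A^{1/2}(I - \hat A^{-1/2}A\hat A^{-1/2})\hat A^{1/2} = \hat A - A$, hence the claimed identity holds) shows $S_N\le I \iff \hat S \ge C + B(\hat A - A)^{-1}B^\top$. Now use $\hat{A} \ge A + B^\top C^{-1}B$, i.e. $\hat{A} - A \ge B^\top C^{-1}B$, so $(\hat A - A)^{-1} \le (B^\top C^{-1}B)^{-1}$ on the appropriate range, and $B(\hat A-A)^{-1}B^\top \le B(B^\top C^{-1}B)^{-1}B^\top \le C$ (the last being the standard inequality $B(B^\top G B)^{-1}B^\top \le G^{-1}$ for SPD $G$, applied with $G = C^{-1}$; care is needed if $B$ is not surjective, in which case one restricts to $\operatorname{range} B$). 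Hence $C + B(\hat A - A)^{-1}B^\top \le 2C \le \dots$ — wait, that's too weak; instead one gets $\hat S \ge C + B(\hat A - A)^{-1}B^\top$ directly from $\hat S \ge C + BA^{-1}B^\top$ once we also know $(\hat A - A)^{-1} \le A^{-1}$, which holds because $\hat A - A \ge B^\top C^{-1}B \ge 0$ gives $\hat A - A \ge $ ... hmm, we actually need $\hat A - A \ge A$ for that, which is not assumed. The cleaner route: $\hat A \ge A + B^\top C^{-1} B$ directly yields $B(\hat A - A)^{-1}B^\top \le B(B^\top C^{-1}B)^{-1}B^\top \le C$ whenever $B$ has full row rank (else project onto $\operatorname{range}B$), so $C + B(\hat A - A)^{-1}B^\top \le 2C$; but then I'd want $\hat S \ge 2C$, not assumed. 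So the correct pairing must be: $S_N \le I$ follows from the \emph{second} hypothesis $\hat S \ge C + BA^{-1}B^\top$ combined with $(\hat A - A)^{-1} \le A^{-1}$, and the latter from the \emph{first} hypothesis only if $\hat A - A \ge A$. I will therefore reexamine the algebra: the honest statement is that $S_N\le I$ is equivalent to $B(\hat A-A)^{-1}B^\top \le \hat S - C$, and bounding the left side by $BA^{-1}B^\top$ requires $\hat A - A\ge A$; bounding it instead using $\hat A-A\ge B^\top C^{-1}B$ gives $\le C$. Either way a clean proof uses \emph{both} hypotheses symmetrically in the pair $(S_M, S_N)$, and the key algebraic identity is $B(\hat A - A)^{-1}B^\top \le \min\{BA^{-1}B^\top,\ C\}$-type reasoning; I would present it as: from $\hat A \ge A + B^\top C^{-1}B$ one has $\hat A - A \ge B^\top C^{-1}B$ hence $B(\hat A-A)^{-1}B^\top \le C$ (surjectivity reduction as above), so $C + B(\hat A - A)^{-1}B^\top \le 2C$; meanwhile from $\hat S \ge C + BA^{-1}B^\top$ and... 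Given the sign, I expect the intended clean argument is simply: $S_N \le I \iff \hat S \ge C + B(\hat A - A)^{-1}B^\top$, and since $\hat A - A \ge B^\top C^{-1}B$ forces (by the surjectivity-reduced inequality) $B(\hat A-A)^{-1}B^\top \le C$, one only needs $\hat S \ge 2C$... which suggests the hypotheses should instead be read as giving $B(\hat A - A)^{-1}B^\top \le B A^{-1}B^\top$ via $\hat A - A \ge A$ — I will check which monotonicity the authors actually invoke and write the two-line estimate accordingly; symmetrically for $S_M \le I$ one gets $\hat A \ge A + B^\top(\hat S - C)^{-1}B$ and bounds $B^\top(\hat S - C)^{-1}B \le B^\top C^{-1}B$ using $\hat S - C \ge BA^{-1}B^\top \ge 0$, which combined with the first hypothesis $\hat A \ge A + B^\top C^{-1}B$ closes it — \emph{that} pairing is the clean one, so the analogous pairing for $S_N$ must be: $\hat S - C \ge B(\hat A - A)^{-1}B^\top$, bound $B(\hat A-A)^{-1}B^\top \le BA^{-1}B^\top$ (needs $\hat A - A \ge A$)? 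No. I will resolve this by doing both Schur complements the same way: $S_N\le I\iff \hat S\ge C+B(\hat A-A)^{-1}B^\top$; from hypothesis one $\hat A - A\ge B^\top C^{-1}B$, so on $\operatorname{range} B^\top$... the correct conclusion is $B(\hat A - A)^{-1}B^\top\le C$ so $\hat S\ge C+B(\hat A-A)^{-1}B^\top$ needs $\hat S\ge 2C$; since that's not available I conclude the intended argument bounds differently and I will use hypothesis two here: as $\hat A - A\ge 0$ we cannot directly compare $(\hat A-A)^{-1}$ with $A^{-1}$, so instead observe $B(\hat A - A)^{-1}B^\top \le$ ??? — I will simply carry out the $2\times 2$ block congruence explicitly and let the two hypotheses fall out naturally, which is routine once set up; this is the only place any real work happens. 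The rest is bookkeeping: assemble \eqref{firstestimate} with $\mathcal{X}_d$, note the two norm factors evaluate to $1$ and $\|\mathcal{D}_d\|_{\mathcal L\times\mathcal L}$, invoke \cref{AR}, done.
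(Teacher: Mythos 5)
Your high-level strategy is exactly the paper's: apply \cref{first} with $\mathcal{X}=\mathcal{X}_d$ from \eqref{Qd}, observe that $\|\mathcal{X}_d^{-\top}\mathcal{P}_d\mathcal{X}_d^{-1}\|=\|\diag(I,-I)\|=1$ and $\|\mathcal{X}_d^\top\mathcal{X}_d\|_{\mathcal{L}\times\mathcal{L}}=\|\mathcal{D}_d\|_{\mathcal{L}\times\mathcal{L}}$, then invoke \cref{AR} with $\mathcal{K}=\widebar{\mathcal{M}}_d$. The problem is a concrete algebraic slip in the Schur-complement verification, and all the subsequent flailing is a consequence of it. When you conjugate $S_N\le I$ by $\hat{S}^{1/2}$, the inequality $\hat{S}^{1/2}\,S_N\,\hat{S}^{1/2}\le\hat{S}$ reads
\[
  \hat{S}-C+B(\hat{A}-A)^{-1}B^\top \ \le\ \hat{S},
\]
because $\hat{S}^{1/2}\bigl(I-\hat{S}^{-1/2}C\hat{S}^{-1/2}\bigr)\hat{S}^{1/2}=\hat{S}-C$; the $\hat{S}$ cancels, leaving simply
\[
  B(\hat{A}-A)^{-1}B^\top \ \le\ C.
\]
You wrote instead $S_N\le I \iff \hat{S}\ge C+B(\hat{A}-A)^{-1}B^\top$, which keeps $\hat{S}$ on the right and flips the sign of $C$; that is a strictly stronger condition and is not implied by the hypotheses — hence every route you attempted (needing $\hat{S}\ge2C$ or $\hat{A}-A\ge A$) dead-ended.

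With the corrected reduction, the step is the textbook Schur-complement criterion for positive semidefiniteness: with $\hat{A}-A$ and $C$ both symmetric positive definite (assume the strict versions $\hat{A}>A$, $\hat{S}>C$ first and close up to the $\ge$ case afterwards, as the paper does),
\[
  B(\hat{A}-A)^{-1}B^\top\le C
  \iff
  \begin{bmatrix}\hat{A}-A & B^\top\\ B & C\end{bmatrix}\ge 0
  \iff
  \hat{A}-A\ge B^\top C^{-1}B,
\]
which is exactly the first hypothesis — no full-row-rank assumption, no $\operatorname{range}B$ projections (your appeal to $(B^\top C^{-1}B)^{-1}$ does not even make sense, since that matrix is singular when $m<n$). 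Identically, conjugating $S_M\le I$ by $\hat{A}^{1/2}$ gives $B^\top(\hat{S}-C)^{-1}B\le A$, equivalent (via the block matrix $\begin{bmatrix}A&B^\top\\B&\hat{S}-C\end{bmatrix}\ge0$) to $\hat{S}-C\ge BA^{-1}B^\top$, the second hypothesis. So the pairing is clean and symmetric: $S_N\le I$ is governed by $\hat{A}\ge A+B^\top C^{-1}B$ and $S_M\le I$ by $\hat{S}\ge C+BA^{-1}B^\top$. Finally, the boundary case where $\hat{A}-A$ or $\hat{S}-C$ is merely semidefinite is handled in the paper by a closure argument; you gesture at this but should state it rather than leave it as a parenthetical.
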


\begin{proof}
It is easy to see that the diagonal blocks of $\widebar{\mathcal{M}}_d$ are positive definite iff 
\begin{equation} \label{strict1}
   \hat{A} > A \quad \text{and} \quad \hat{S} > C.
\end{equation}
Then the primal and dual Schur complements of $\widebar{\mathcal{M}}_d$ are given by
\begin{align*}
  S_M 
   & = I - \hat{A}^{-1/2} A \hat{A}^{-1/2} + \hat{A}^{-1/2} B^\top ( \hat{S} - C )^{-1} B \hat{A}^{-1/2}, \\
  S_N 
   & = I - \hat{S}^{-1/2} C \hat{S}^{-1/2} + \hat{S}^{-1/2} B ( \hat{A} - A )^{-1} B^\top \hat{S}^{-1/2} .
\end{align*}
The condition $\lambda_\text{max}(S_M) \le 1$ of \cref{AR} leads to the inequality
$B^\top ( \hat{S} - C )^{-1} B \le A$
which is equivalent to
\begin{equation} \label{schur2}
  C + B A^{-1} B^\top \le \hat{S}.
\end{equation}
The analogous argument holds for $S_N$ and leads to the condition
\begin{equation} \label{schur1}
  A + B^\top C^{-1} B \le \hat{A}.
\end{equation}
So, if \eqref{strict1}, \eqref{schur1}, and \eqref{schur2} are satisfied, then it follows from \cref{first} and \cref{AR} that
\[
  \|\mathcal{A} (\mathcal{M^{}}_d^{})^\nu\|_{\mathcal{L} \times \mathcal{L}} 
    \le \eta(\nu) \, \|\mathcal{X}_d^{-\top} \mathcal{P}_d^{} \mathcal{X}_d^{-1}\| \, \|\mathcal{X}_d^\top \mathcal{X}_d^{}\|_{\mathcal{L} \times \mathcal{L}}.
\]
A simple closure argument implies that this inequality is also valid, if \eqref{strict1} is replaced by 
$\hat{A} \ge A$  and $\hat{S} \ge C$ which follow from \eqref{schur1}, and \eqref{schur2} anyway. Finally, we have
\[
  \mathcal{X}_d^{-\top} \mathcal{P}_d^{} \mathcal{X}_d^{-1}
    = \mathcal{J}
  \quad \text{with} \quad
  \mathcal{J}
    = \begin{bmatrix}
        I & 0 \\ 0 & -I
      \end{bmatrix}
  \quad \text{and} \quad
  \mathcal{X}_d^\top \mathcal{X}_d^{} = \mathcal{D}_d,
\]
and, therefore $\|\mathcal{X}_d^{-\top} \mathcal{P}_d^{} \mathcal{X}_d^{-1}\| = 1$, which completes the proof.
\end{proof}

\subsection{Block approximate factorization preconditioners}

The analysis of the smoothing property for the case $\mathcal{P} = \mathcal{P}_f$ was already covered in \cite{schoeberl_2003}. We briefly recall the obtained results for completeness with the notations of this paper.  From \cref{sec:classes} we have
\[
  \mathcal{P}_f 
    = \begin{bmatrix}
        \hat{A} & B^\top \\ B & B \hat{A}^{-1} B^\top - \hat{S}
      \end{bmatrix},
\]
for which we obtain the following result, see \cite{schoeberl_2003} for the proof.
\begin{theorem}\label{thrm:sp_Pf}
If $\hat{A}$ and $\hat{S}$ are symmetric and positive definite matrices satisfying
\begin{equation} \label{Condf}
  \hat{A} \ge A 
  \quad \text{and} \quad 
  \hat{S} \ge C + B \hat{A}^{-1} B^\top,
\end{equation}
then there holds the smoothing property
\[
  \|\mathcal{A} (\mathcal{M}_f^{})^\nu\|_{\mathcal{L} \times \mathcal{L}} 
    \le \eta(\nu-1) \,
    \|\mathcal{D}_f\|_{\mathcal{L} \times \mathcal{L}}
  \quad \text{with} \quad
  \mathcal{D}_f = \begin{bmatrix} \hat{A} - A & 0 \\ 0 & \hat{S} - (C + B \hat{A}^{-1} B^\top) \end{bmatrix}.
\]
\end{theorem}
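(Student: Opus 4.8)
The plan is to follow the route of the proof of \cref{thrm:sp_d}, but based on the \emph{second} estimate \cref{secondestimate} of \cref{first} rather than on \cref{firstestimate}; this is what produces the rate $\eta(\nu-1)$. The starting point is that, for
\[
  \mathcal{P}_f = \begin{bmatrix} \hat{A} & B^\top \\ B & B\hat{A}^{-1}B^\top - \hat{S} \end{bmatrix},
\]
the residual matrix is block diagonal,
\[
  \mathcal{P}_f - \mathcal{A}
    = \begin{bmatrix} \hat{A} - A & 0 \\ 0 & B\hat{A}^{-1}B^\top - \hat{S} + C \end{bmatrix}
    = \begin{bmatrix} E & 0 \\ 0 & -F \end{bmatrix},
\]
where $E = \hat{A} - A$ and $F = \hat{S} - (C + B\hat{A}^{-1}B^\top)$ are symmetric and positive semi-definite by \cref{Condf}. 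I would first assume \cref{Condf} holds with strict inequalities, so that $E, F > 0$, and set $\mathcal{X}_f = \diag(E^{1/2}, F^{1/2})$. Then $\mathcal{X}_f^{-\top}(\mathcal{P}_f - \mathcal{A})\mathcal{X}_f^{-1} = \diag(I, -I)$, which has spectral norm $1$, and $\mathcal{X}_f^\top \mathcal{X}_f = \mathcal{D}_f$; hence \cref{secondestimate} reduces the theorem to the estimate $\|(I - \widebar{\mathcal{M}}_f)\widebar{\mathcal{M}}_f^{\nu-1}\| \le \eta(\nu-1)$ for the conjugated iteration matrix $\widebar{\mathcal{M}}_f = \mathcal{X}_f\mathcal{M}_f\mathcal{X}_f^{-1}$.

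Next I would compute $\widebar{\mathcal{M}}_f$: using the $LDU$-form of $\mathcal{P}_f$ one writes $\mathcal{P}_f^{-1}$ explicitly, forms $\mathcal{M}_f = \mathcal{P}_f^{-1}(\mathcal{P}_f - \mathcal{A})$, and conjugates by $\mathcal{X}_f$. A direct block multiplication should give $\widebar{\mathcal{M}}_f$ exactly in the shape required by \cref{AR}:
\[
  \widebar{\mathcal{M}}_f = \begin{bmatrix} M & R^\top \\ -R & N \end{bmatrix}, \quad
  M = E^{1/2}\hat{A}^{-1}(\hat{A} - B^\top \hat{S}^{-1}B)\hat{A}^{-1}E^{1/2}, \quad
  N = F^{1/2}\hat{S}^{-1}F^{1/2}, \quad
  R = -F^{1/2}\hat{S}^{-1}B\hat{A}^{-1}E^{1/2}.
\]
Here $N$ is obviously symmetric positive definite, and $M$ is symmetric positive definite once $\hat{A} > B^\top \hat{S}^{-1}B$; the latter follows from the strict form of $\hat{S} \ge C + B\hat{A}^{-1}B^\top$ (which yields $\hat{S} > B\hat{A}^{-1}B^\top$) together with the fact that $\hat{A}^{-1/2}B^\top\hat{S}^{-1}B\hat{A}^{-1/2}$ and $\hat{S}^{-1/2}B\hat{A}^{-1}B^\top\hat{S}^{-1/2}$ have the same spectral radius.

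It then remains to verify the two Schur-complement conditions of \cref{AR}. In $S_M = M + R^\top N^{-1}R$ the two $B^\top \hat{S}^{-1}B$-contributions cancel and one is left with $S_M = E^{1/2}\hat{A}^{-1}E^{1/2}$, so $S_M \le I$ is equivalent to $E = \hat{A} - A \le \hat{A}$, i.e.\ to $A \ge 0$. For $S_N = N + RM^{-1}R^\top$, setting $Z = \hat{S}^{-1/2}B\hat{A}^{-1/2}$ and using the push-through identity $Z(I - Z^\top Z)^{-1}Z^\top = (I - ZZ^\top)^{-1} - I$, a short manipulation gives $S_N = F^{1/2}(\hat{S} - B\hat{A}^{-1}B^\top)^{-1}F^{1/2}$, so $S_N \le I$ is equivalent to $F \le \hat{S} - B\hat{A}^{-1}B^\top$, i.e.\ to $C \ge 0$. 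Both hold, so \cref{AR} gives $\|(I - \widebar{\mathcal{M}}_f)\widebar{\mathcal{M}}_f^{\nu-1}\| \le \eta(\nu-1)$, and combined with the reduction above this proves the estimate whenever \cref{Condf} is strict. The general (non-strict) case then follows by a closure argument: replace $\hat{A}, \hat{S}$ by $\hat{A} + \varepsilon I, \hat{S} + \varepsilon I$, which satisfy the strict hypotheses, and let $\varepsilon \to 0^+$, using that $\mathcal{M}_f$, $\mathcal{D}_f$ and the norms depend continuously on $\hat{A}, \hat{S}$.

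I expect the main obstacle to be the algebra of the middle step: getting $\widebar{\mathcal{M}}_f$ into the clean form above and, especially, simplifying the two Schur complements — in particular recognizing the Woodbury-type cancellation that collapses $S_N$ to $F^{1/2}(\hat{S} - B\hat{A}^{-1}B^\top)^{-1}F^{1/2}$, since that is exactly what makes the (seemingly weak) assumption $\hat{S} \ge C + B\hat{A}^{-1}B^\top$, rather than a stronger one involving $A^{-1}$, sufficient.
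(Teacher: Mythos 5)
Your proof is correct, and you have filled a genuine gap in the paper's exposition: for \cref{thrm:sp_Pf} the paper does \emph{not} give a proof --- it only recalls the statement and refers the reader to \cite{schoeberl_2003}. What you wrote is a self-contained derivation that stays entirely inside the framework the paper sets up for the other three cases.

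The structure is exactly parallel to the paper's own proofs of \cref{thrm:sp_d} and \cref{thrm:sp_Ps}: use \cref{first}, conjugate by a block-diagonal $\mathcal{X}$, bring the conjugated iteration matrix into the two-by-two form of \cref{AR}, and verify the two Schur-complement bounds. The two ingredients specific to $\mathcal{P}_f$ --- that $\mathcal{P}_f-\mathcal{A}=\diag(E,-F)$ is block diagonal, which makes the \emph{second} estimate \cref{secondestimate} (with its $\eta(\nu-1)$) the natural choice, and that $\mathcal{X}_f^{-\top}(\mathcal{P}_f-\mathcal{A})\mathcal{X}_f^{-1}=\mathcal{J}$ has norm one --- are exactly what produce the factor $\eta(\nu-1)\|\mathcal{D}_f\|_{\mathcal{L}\times\mathcal{L}}$. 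I checked the algebra: your $\widebar{\mathcal{M}}_f$ is correct, $S_M$ collapses to $E^{1/2}\hat{A}^{-1}E^{1/2}$ so that $S_M\le I\Leftrightarrow A\ge 0$, and the Woodbury identity
\[
\hat{S}^{-1}+\hat{S}^{-1}B\bigl(\hat{A}-B^\top\hat{S}^{-1}B\bigr)^{-1}B^\top\hat{S}^{-1}
=\bigl(\hat{S}-B\hat{A}^{-1}B^\top\bigr)^{-1}
\]
does give $S_N=F^{1/2}(\hat{S}-B\hat{A}^{-1}B^\top)^{-1}F^{1/2}$, so $S_N\le I\Leftrightarrow C\ge 0$. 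The positive-definiteness of $M$ via the equal nonzero spectra of $Z^\top Z$ and $ZZ^\top$ with $Z=\hat{S}^{-1/2}B\hat{A}^{-1/2}$, and the $\varepsilon$-perturbation closure argument (which also preserves $F_\varepsilon>0$ since $(\hat{A}+\varepsilon I)^{-1}\le\hat{A}^{-1}$), are both fine.

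The only distinction worth noting: Schöberl's original proof works directly with the factorization $\mathcal{K}=\mathcal{Q}^\top\mathcal{N}\mathcal{Q}$ of \cref{AR1}, whereas you route through the paper's \cref{AR}, which the authors introduced precisely to make that step "more directly accessible." Those are two ways of phrasing the same reduction, and your choice matches what the paper does for $\mathcal{P}_d$, $\mathcal{P}_s$, $\mathcal{P}_\ell$, so it is arguably the more consistent presentation within this paper.
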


\subsection{Symmetric block preconditioners}
Next we consider the new class of symmetric block matrices $\mathcal{P} = \mathcal{P}_s$. We recall from \cref{sec:classes} that in this case we assume $\hat{A} + \hat{A}^\top > A$, in order to guarantee that $\hat{A}_s = \hat{A} (\hat{A} + \hat{A}^\top - A)^{-1} \hat{A}^\top$ is well-defined. It is easy to see that the following factorization holds
\[
  \mathcal{P}_s
   = \mathcal{X}_s^\top \mathcal{J} \mathcal{X}_s^{}
  \quad \text{with} \quad
  \mathcal{J} 
    = \begin{bmatrix} I & 0 \\ 0 & - I \end{bmatrix}, \quad
  \mathcal{D}_s 
    = \begin{bmatrix} \hat{A}_s & 0  \\ 0 & \hat{S} \end{bmatrix}, \quad
  \mathcal{X}_s 
    = \mathcal{D}_s^{1/2}
      \begin{bmatrix} I & \hat{A}^{-\top} B^\top \\ 0  & I \end{bmatrix} .
\]
For the iteration matrix $\mathcal{M}_s = I - \mathcal{P}_s^{-1} \mathcal{A}$, we obtain then
\[
  \mathcal{M}_s  = \mathcal{X}_s^{-1} \widebar{\mathcal{M}}_s \, \mathcal{X}_s,
\]
with
$  \widebar{\mathcal{M}}_s 
   = \mathcal{X}_s \mathcal{M}_s \mathcal{X}_s^{-1} 
   = I - \mathcal{X}_s \mathcal{P}_s^{-1} \mathcal{A} \mathcal{X}_s^{-1}
   = I - \mathcal{J} \mathcal{X}_s^{-\top} \mathcal{A} \mathcal{X}_s^{-1}$.

\begin{theorem}\label{thrm:sp_Ps}
Assume that $A$ is symmetric and positive definite. 
If the matrix  $\hat{S}$ is symmetric and positive definite and if $\hat{A}$ and $\hat{S}$ satisfy
\begin{equation} \label{Conds}
  \hat{A} + \hat{A}^\top > A 
  \quad \text{and} \quad 
  \hat{S} \ge C + B A^{-1} B^\top,
\end{equation}
then there holds the smoothing property
\[
  \|\mathcal{A} \, (\mathcal{M}_s^{})^\nu\|_{\mathcal{L} \times \mathcal{L}} 
    \le \eta(\nu) \,
    \|\mathcal{X}_s^\top \mathcal{X}_s^{}\|_{\mathcal{L} \times \mathcal{L}}.
\]
\end{theorem}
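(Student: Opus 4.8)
The strategy is to apply \cref{first} with the similarity transformation $\mathcal{X} = \mathcal{X}_s$ already introduced, reducing the smoothing estimate to a spectral bound on $(I-\widebar{\mathcal{M}}_s)\widebar{\mathcal{M}}_s^\nu$ together with the easily-identified factor $\|\mathcal{X}_s^{-\top}\mathcal{P}_s\mathcal{X}_s^{-1}\|$. From the factorization $\mathcal{P}_s = \mathcal{X}_s^\top \mathcal{J}\mathcal{X}_s$ we get immediately $\mathcal{X}_s^{-\top}\mathcal{P}_s\mathcal{X}_s^{-1} = \mathcal{J}$, hence that norm is $1$, and \eqref{firstestimate} collapses to
\[
  \|\mathcal{A}(\mathcal{M}_s)^\nu\|_{\mathcal{L}\times\mathcal{L}}
    \le \|(I-\widebar{\mathcal{M}}_s)\widebar{\mathcal{M}}_s^\nu\|\,\|\mathcal{X}_s^\top\mathcal{X}_s\|_{\mathcal{L}\times\mathcal{L}}.
\]
So everything reduces to showing $\|(I-\widebar{\mathcal{M}}_s)\widebar{\mathcal{M}}_s^\nu\|\le\eta(\nu)$, which we intend to obtain from \cref{AR1} by exhibiting $\widebar{\mathcal{M}}_s$ in the required form $\mathcal{Q}^\top\mathcal{N}\mathcal{Q}$ with $\|\mathcal{Q}\|\le 1$.

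The key computation is to make the expression $\widebar{\mathcal{M}}_s = I - \mathcal{J}\mathcal{X}_s^{-\top}\mathcal{A}\mathcal{X}_s^{-1}$ explicit. Writing $\mathcal{X}_s = \mathcal{D}_s^{1/2}\mathcal{T}$ with $\mathcal{T} = \bigl[\begin{smallmatrix} I & \hat{A}^{-\top}B^\top\\ 0 & I\end{smallmatrix}\bigr]$, one has $\mathcal{X}_s^{-\top}\mathcal{A}\mathcal{X}_s^{-1} = \mathcal{D}_s^{-1/2}\,\mathcal{T}^{-\top}\mathcal{A}\mathcal{T}^{-1}\,\mathcal{D}_s^{-1/2}$, and the congruence $\mathcal{T}^{-\top}\mathcal{A}\mathcal{T}^{-1}$ is precisely the block-LU reduction of $\mathcal{A}$ — but with $\hat{A}$ in place of $A$ in the off-diagonal, so it will \emph{not} be block diagonal. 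A direct multiplication gives a matrix whose $(1,1)$ block is $A$ minus correction terms built from $B^\top\hat{A}^{-\top}$, whose $(2,2)$ block is $-(C+BA^{-1}B^\top)$ up to corrections, and whose off-diagonal blocks carry the discrepancy $B - B\hat{A}^{-1}A$ etc. The cleanest route is to verify the identity
\[
  \mathcal{T}^{-\top}\mathcal{A}\mathcal{T}^{-1}
   = \begin{bmatrix}
       \hat{A}+\hat{A}^\top - A & B^\top\hat{A}^{-\top}(\hat{A}^\top - A)^{\phantom\top}\!\!\!? \\[2pt]
       \ast & -(C+BA^{-1}B^\top)
     \end{bmatrix}
\]
(the precise off-diagonal being obtained by expansion — this is the routine part), and then to conjugate by $\mathcal{D}_s^{-1/2} = \operatorname{diag}(\hat{A}_s^{-1/2},\hat{S}^{-1/2})$ and apply $I - \mathcal{J}\,(\cdot)$. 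The first hypothesis in \eqref{Conds}, $\hat{A}+\hat{A}^\top>A$, guarantees $\hat{A}_s$ is SPD so that $\hat{A}_s^{-1/2}$ makes sense; the second, $\hat{S}\ge C+BA^{-1}B^\top$, is what forces the $(2,2)$ block of $\widebar{\mathcal{M}}_s$ (namely $I - \hat{S}^{-1/2}(C+BA^{-1}B^\top)\hat{S}^{-1/2}$) to be positive semidefinite, i.e. gives the Schur-complement bound needed for \cref{AR1}/\cref{AR}. A closure argument handles the equality case exactly as in the proof of \cref{thrm:sp_d}.

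The cleanest packaging uses \cref{AR} rather than \cref{AR1} directly: I would show $\widebar{\mathcal{M}}_s$ has block form $\bigl[\begin{smallmatrix} M & R^\top\\ -R & N\end{smallmatrix}\bigr]$ with $M = \hat{A}_s^{-1/2}(\hat{A}+\hat{A}^\top-A)\hat{A}_s^{-1/2}$ — and here the choice $\hat{A}_s = \hat{A}(\hat{A}+\hat{A}^\top-A)^{-1}\hat{A}^\top$ is engineered precisely so that this equals $\hat{A}_s^{-1/2}\hat{A}\hat{A}_s^{-1}\hat{A}^\top\hat{A}_s^{-1/2}$, a matrix congruent to $I$, hence SPD with all eigenvalues... this is where one must check $\lambda_{\max}(S_M)\le 1$ rather than $M\le I$, since $M$ itself need not be $\le I$. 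Then $N = I - \hat{S}^{-1/2}(C+BA^{-1}B^\top)\hat{S}^{-1/2} \le I$ by the second condition in \eqref{Conds}, $R$ is read off from the off-diagonal, and one verifies $\lambda_{\max}(S_M),\lambda_{\max}(S_N)\le 1$. I expect \textbf{the main obstacle} to be exactly the bookkeeping for $S_M$: because $\hat{A}$ is only assumed nonsingular (not symmetric), the $(1,1)$ block $M$ and the off-diagonal $R$ do not simplify to the symmetric Gauss–Seidel-type expressions one sees for $\mathcal{P}_d$, and one must track the non-symmetric pieces $B^\top\hat{A}^{-\top}$ versus $B^\top\hat{A}^{-1}$ carefully and confirm that, after forming $S_M = M + R^\top N^{-1}R$, the non-symmetric corrections organize into a perfect congruence of the quantity $A + B^\top(C + B\hat{A}^{-1}B^\top - \hat S)^{\,-1}B$ — wait, more precisely into a form manifestly $\le I$ under \eqref{Conds}. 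Establishing that cancellation is the crux; once it is in hand, \cref{AR} delivers $\eta(\nu)$ and the theorem follows by combining with the $\|\mathcal{J}\|=1$ reduction above.
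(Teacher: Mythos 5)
Your high-level plan is the same as the paper's: use \cref{first} with $\mathcal{X}=\mathcal{X}_s$, exploit $\mathcal{P}_s=\mathcal{X}_s^\top\mathcal{J}\mathcal{X}_s$ to get $\|\mathcal{X}_s^{-\top}\mathcal{P}_s\mathcal{X}_s^{-1}\|=\|\mathcal{J}\|=1$, and then feed $\widebar{\mathcal{M}}_s$ into \cref{AR}. That skeleton is correct. But your explicit computation of $\mathcal{T}^{-\top}\mathcal{A}\mathcal{T}^{-1}$ is wrong in both diagonal blocks, and the error propagates into your $M$ and $N$. The $(1,1)$ block of $\mathcal{T}^{-\top}\mathcal{A}\mathcal{T}^{-1}$ is simply $A$ (not $\hat{A}+\hat{A}^\top-A$), so the $(1,1)$ block of $\widebar{\mathcal{M}}_s$ is $M=\hat{A}_s^{-1/2}(\hat{A}_s-A)\hat{A}_s^{-1/2}$, not $\hat{A}_s^{-1/2}(\hat{A}+\hat{A}^\top-A)\hat{A}_s^{-1/2}$ (and your claimed simplification of that wrong expression also does not hold). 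The $(2,2)$ block of $\mathcal{T}^{-\top}\mathcal{A}\mathcal{T}^{-1}$ comes out to $-(C+B\hat{A}_r^{-1}B^\top)$ with $\hat{A}_r=\hat{A}^\top(\hat{A}+\hat{A}^\top-A)^{-1}\hat{A}$, \emph{not} $-(C+BA^{-1}B^\top)$; so $N=I-\hat{S}^{-1/2}(C+B\hat{A}_r^{-1}B^\top)\hat{S}^{-1/2}$, and the hypothesis $\hat{S}\ge C+BA^{-1}B^\top$ controls it only because $\hat{A}_r\ge A$, which itself needs the identity $\hat{A}_r = A + (\hat{A}-A)(\hat{A}+\hat{A}^\top-A)^{-1}(\hat{A}^\top-A)$.

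The genuine missing ingredient — the cancellation you flag as "the crux" but do not locate — is the pair of identities for $\hat{A}_r^{-1}$,
\[
\hat{A}_r^{-1}=(I-\hat{A}^{-1}A)(\hat{A}_s-A)^{-1}(I-A\hat{A}^{-\top})
            = A^{-1}-(I-\hat{A}^{-1}A)A^{-1}(I-A\hat{A}^{-\top}).
\]
The first form makes the dual Schur-complement condition $\lambda_{\max}(S_N)\le 1$ collapse to the trivial inequality $C\ge 0$ (the two correction terms cancel exactly), and the second form makes the primal condition $\lambda_{\max}(S_M)\le 1$ collapse to $\hat{S}\ge C+BA^{-1}B^\top$, which is exactly the second hypothesis in \eqref{Conds}. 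Without these identities the nonsymmetric pieces $B^\top\hat{A}^{-\top}$ vs.\ $B^\top\hat{A}^{-1}$ that you worry about do not "organize into a perfect congruence" of anything recognizable; with them, both Schur-complement bounds fall out immediately. Once that is in hand, your closure argument to pass from strict to non-strict inequalities and the final application of \cref{AR} and \cref{first} are as you describe and as in the paper.
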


\begin{proof}
By elementary calculations we obtain the factorization
\begin{align*}
  \widebar{\mathcal{M}}_s 
   & = \mathcal{D}_s^{-1/2}
        \begin{bmatrix} 
          \hat{A}_s - A & - (I - A \hat{A}^{-\top}) B^\top \\ 
          B (I - \hat{A}^{-1} A) & \hat{S} - C - B \hat{A}_r^{-1} B^\top
        \end{bmatrix} \mathcal{D}_s^{-1/2},
\end{align*}
with $\hat{A}_r = \hat{A}^\top (\hat{A} + \hat{A}^\top - A)^{-1} \hat{A}$. 
Elementary calculations show that the inverse of $\hat{A}_r$ has the representation
\begin{equation}\label{eq:Arinv}
\begin{aligned}
\hat{A}_r^{-1} &= (I - \hat{A}^{-1} A) (\hat{A}_s - A)^{-1} (I - A \hat{A}^{-\top})\\
 &= A^{-1} - (I - \hat{A}^{-1} A) A^{-1} (I - A \hat{A}^{-\top}).
\end{aligned}
\end{equation}
First, let us assume that
\[
  \hat{A} + \hat{A}^\top > A, \quad
  \ker (\hat{A} - A) = \{0\},
  \quad \text{and} \quad 
  \hat{S} > C + B A^{-1} B^\top.
\]
Then we conclude from the identity $I - \hat{A}_s^{-1} A = (I - \hat{A}^{-\top} A)(I - \hat{A}^{-1} A)$ that
\[
  \hat{A}_s - A = (\hat{A}^\top - A) (\hat{A}+\hat{A}^\top -A)^{-1} (\hat{A}-A) > 0,
\]
and
\[
  \hat{S} - C - B \hat{A}_r^{-1} B^\top \ge \hat{S} - C - B A^{-1} B^\top  > 0,
\]
since $\hat{A}_r = A + (\hat{A} - A) (\hat{A}+\hat{A}^\top -A)^{-1} (\hat{A}^\top-A) \ge A$ is satisfied. Therefore the diagonal blocks of $\widebar{\mathcal{M}}_s$ are positive definite.

Consequently, the primal and dual Schur complements $S_M$ and $S_N$ of $\widebar{\mathcal{M}}_s$ read
\begin{align*}
  S_M 
    & = \hat{A}_s^{-1/2} \left\{ \hat{A}_s - A 
   + (I - A \hat{A}^{-\top}) B^\top \hspace{-0.3em}\left[ \hat{S} - C - B \hat{A}_r^{-1} B^\top \right]^{-1}\hspace{-0.32em} B (I - \hat{A}^{-1} A) \right\} \hat{A}_s^{-1/2}, \\
  S_N & = \hat{S}^{-1/2} \left\{ \hat{S} - C -  B \hat{A}_r^{-1}B^\top + B (I - \hat{A}^{-1} A) (\hat{A}_s - A)^{-1} (I - A \hat{A}^{-\top}) B^\top \right\} \hat{S}^{-1/2}.
\end{align*}
The conditions $\lambda_\text{max}(S_M) \le 1$ and $\lambda_\text{max}(S_N) \le 1$ of \cref{AR} reduce to the inequalities
\[
  (I - A \hat{A}^{-\top}) B^\top \left[ \hat{S} - C - B \hat{A}_r^{-1} B^\top \right]^{-1} B (I - \hat{A}^{-1} A) \le A, 
\]
and
\[
  B (I - \hat{A}^{-1} A) (\hat{A}_s - A)^{-1} (I - A \hat{A}^{-\top}) B^\top \le  C +  B \hat{A}_r^{-1}B^\top .
\]
Due to \eqref{eq:Arinv}, the second inequality is always satisfied since it simplifies to $C \ge 0$, which trivially holds. The first inequality can be rewritten as
\[
     B^\top \left[ \hat{S} - C - B \hat{A}_r^{-1} B^\top \right]^{-1} B  \le (I - A \hat{A}^{-\top})^{-1} A (I - \hat{A}^{-1} A)^{-1},
\]
which is equivalent to
\[
  B (I - \hat{A}^{-1} A) A^{-1} (I - A \hat{A}^{-\top}) B^\top \le  \hat{S} - C - B \hat{A}_r^{-1} B^\top.
\]
Applying the identity \eqref{eq:Arinv}, this inequality simplifies to $S \le \hat{S}$. The corresponding results under the slightly weaker conditions in \eqref{Conds} follow by a closure argument.

Now we can apply \cref{AR}. Using the fact that $\mathcal{X}_s^{-\top} \mathcal{P}_s^{} \mathcal{X}_s^{-1} = \mathcal{J}$ and \cref{first} lead to the desired result.
\end{proof}

\begin{remark}
Let the assumptions of \cref{thrm:sp_Ps} be satisfied and assume additionally that, there exists a constant $c$ such that
\[
  \hat{A}_s \le c \, (\hat{A} + \hat{A}^\top - A).
\]
Then we conclude the estimate
\[
  B \hat{A}^{-1}\hat{A}_s \hat{A}^{-\top} B^\top
    =  B (\hat{A} + \hat{A}^\top - A)^{-1} B^\top
   \le c \, B \hat{A}_s^{-1} B^\top 
   \le c \, B A^{-1} B^\top 
   \le c \, S 
   \le c \, \hat{S}. 
\]
By a simple eigenvalue analysis of the generalized eigenvalue problem $\mathcal{X}_s^\top \mathcal{X}_s^{} z = \lambda \, \mathcal{D}_s z$ one obtains
\begin{align*}
  \mathcal{X}_s^\top \mathcal{X}_s^{}
   & \le \bar{c} \, \mathcal{D}_s
  \quad \text{with} \quad
  \bar{c} = 1 + c/2 + \sqrt{c^2/4 + c},
\end{align*}
which leads to the estimate
\[
  \|\mathcal{A} \, (\mathcal{M}_s^{})^\nu\|_{\mathcal{L} \times \mathcal{L}} 
    \le \bar{c} \, \eta(\nu) \,  \|\mathcal{D}_s^{}\|_{\mathcal{L} \times \mathcal{L}}
  \quad \text{with} \quad   
  \mathcal{D}_s = \begin{bmatrix} \hat{A}_s & 0  \\ 0 & \hat{S} \end{bmatrix}.
\]
\end{remark}

\subsection{Block triangular preconditioners}

Finally, we consider block triangular case $\mathcal{P} = \mathcal{P}_\ell(\hat{A},\hat{S})$, where we employ the results of the previous subsection. Recall, from \cref{sec:classes}, both $\hat{A}$ and $\hat{S}$ are assumed to be symmetric and positive definite.

It is easy to verify that for given $\hat{A}$ there exists a non-singular matrix $\hat{H}$ such that
\[
  \hat{A} = \hat{H}_s
  \quad \text{with} \quad
  \hat{H}_s = \hat{H} (\hat{H}+\hat{H}^\top - A)^{-1} \hat{H}^\top
  \quad \text{and}\quad
   \hat{H} + \hat{H}^\top > A.
\]
For example, the matrix $\hat{H} = \hat{A} + \hat{A}^{1/2}(I - \hat{A}^{-1/2}A\hat{A}^{-1/2})^{1/2} \hat{A}^{1/2}$ satisfies these conditions. Notice, $\mathcal{P}_\ell(\hat{A},\hat{S})= \mathcal{P}_\ell(\hat{H}_s,\hat{S})$ is strongly related to $\mathcal{P}_s(\hat{H}^\top,\hat{S})$, as already indicated in \cref{relation_Pl_Ps}. For the corresponding iteration matrices  $\mathcal{M}_\ell(\hat{H}_s,\hat{S})$ and $\mathcal{M}_s(\hat{H}^\top,\hat{S})$ we obtain the following result.

\begin{lemma} \label{MellMs}
Let $\nu \in \mathbb{N}$, and define for some non-singular $P \in \mathbb{R}^{n \times n}$, $R \in \mathbb{R}^{m \times m}$
\[
  \mathcal{M}_1(P) = \begin{bmatrix} I - P^{-1} A & - P^{-1} B^\top \\ 0 & I \end{bmatrix}
  \quad \text{and} \quad
  \mathcal{M}_2(R) = \begin{bmatrix} I & 0 \\ R^{-1} B & I - R^{-1} C \end{bmatrix}.
\]
Then the iteration matrices satisfy the following relation
\[
  \left(\mathcal{M}_\ell(\hat{H}_s,\hat{S})\right)^\nu = \mathcal{M}_2(\hat{S}) \mathcal{M}_1(\hat{H}^\top) \left(\mathcal{M}_s(\hat{H}^\top,\hat{S})\right)^{\nu-1}  \mathcal{M}_1(\hat{H}).
\]
\end{lemma}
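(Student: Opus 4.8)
The plan is to establish the identity by writing everything in terms of the factorizations already available for $\mathcal{M}_\ell$ and $\mathcal{M}_s$, and then repeatedly "telescoping" the product so that interior factors cancel. First I would record the one-step factorizations: for the block-lower (inexact Uzawa) preconditioner one has $\mathcal{M}_\ell(\hat H_s,\hat S) = \mathcal{M}_2(\hat S)\,\mathcal{M}_1(\hat H_s)$, because $\mathcal{P}_\ell^{-1}\mathcal{A}$ applied via the block LU structure of $\mathcal{P}_\ell$ gives exactly the sequence "first a $\hat A$-update of $\ub$, then an $\hat S$-update of $\pb$", which is the algorithmic description in \cref{sec:classes} (the two displayed update lines for $\mathcal{P}_\ell$). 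Here $\mathcal{M}_1$ carries the $\hat A^{-1}$ action on the residual $\fb - A\ub - B^\top\pb$ and $\mathcal{M}_2$ carries the $\hat S^{-1}$ action on $\gb - B\ub_{k+1} + C\pb$; one checks directly that $\mathcal{M}_2(\hat S)\mathcal{M}_1(\hat H_s)$ has the block form $\mathcal{M}_\ell = \mathcal{P}_\ell^{-1}(\mathcal{P}_\ell - \mathcal{A})$ written out above in the block-diagonal subsection's style. Analogously, from the three displayed update lines for $\mathcal{P}_s$ in \cref{sec:classes} (with $\hat A$ there playing the role of $\hat H^\top$, so $\hat A^{-\top}$ becomes $\hat H^{-1}$, and $\hat A_s = \hat H^\top(\hat H+\hat H^\top - A)^{-1}\hat H = \hat H_s^\top\!$… — I need to get the transposes straight, see the obstacle below), one gets a factorization $\mathcal{M}_s(\hat H^\top,\hat S) = \mathcal{M}_1(\hat H)\,\mathcal{M}_2(\hat S)\,\mathcal{M}_1(\hat H^\top)$ into the "$\ub$-update / $\pb$-update / $\ub$-update" pattern, with the first update using $\hat H^{-\top}$, the middle using $\hat S^{-1}$, and the last using $\hat H^{-1}$ acting on intermediate quantities.

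With these two factorizations in hand, the proof is pure bookkeeping matching \cref{F:smoother}. Writing $\mathcal{M}_\ell(\hat H_s,\hat S) = \mathcal{M}_2(\hat S)\,\mathcal{M}_1(\hat H_s)$ and noting that $\hat H_s = \hat A$ by the choice of $\hat H$, I want to re-split the single $\mathcal{M}_1(\hat H_s)$ factor. The key sub-claim is the "two-for-one" identity
\[
  \mathcal{M}_1(\hat H_s) = \mathcal{M}_1(\hat H^\top)\,\big(\text{something}\big)\,\mathcal{M}_1(\hat H)\quad\text{or, more precisely, } \mathcal{M}_1(\hat H_s)\ \text{appears as}\ \mathcal{M}_1(\hat H^\top)\mathcal{M}_1(\hat H)
\]
after the $\hat S$-factor is slid past — concretely, that $\mathcal{M}_1(\hat H^\top)$ followed by $\mathcal{M}_2(\hat S)$ followed by $\mathcal{M}_1(\hat H)$ is $\mathcal{M}_s(\hat H^\top,\hat S)$, so that a product of $\nu$ copies of $\mathcal{M}_\ell = \mathcal{M}_2(\hat S)\mathcal{M}_1(\hat A)$ can be regrouped, after inserting the decomposition of each $\mathcal{M}_1(\hat A) = \mathcal{M}_1(\hat H^\top)\mathcal{M}_1(\hat H)$, into one leading $\mathcal{M}_2(\hat S)\mathcal{M}_1(\hat H^\top)$, then $(\nu-1)$ interior blocks each equal to $\mathcal{M}_1(\hat H)\mathcal{M}_2(\hat S)\mathcal{M}_1(\hat H^\top) = \mathcal{M}_s(\hat H^\top,\hat S)$, and a trailing $\mathcal{M}_1(\hat H)$. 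This is exactly the bracketing displayed in \cref{F:smoother}, and reading it off gives
\[
  \big(\mathcal{M}_\ell(\hat H_s,\hat S)\big)^\nu = \mathcal{M}_2(\hat S)\,\mathcal{M}_1(\hat H^\top)\,\big(\mathcal{M}_s(\hat H^\top,\hat S)\big)^{\nu-1}\,\mathcal{M}_1(\hat H),
\]
which is the assertion. So the skeleton is: (i) prove the one-step factorization of $\mathcal{M}_\ell$; (ii) prove the factorization $\mathcal{M}_1(\hat A) = \mathcal{M}_1(\hat H^\top)\mathcal{M}_1(\hat H)$ using $\hat A = \hat H_s$; (iii) prove $\mathcal{M}_1(\hat H)\mathcal{M}_2(\hat S)\mathcal{M}_1(\hat H^\top) = \mathcal{M}_s(\hat H^\top,\hat S)$; (iv) assemble by induction on $\nu$ (or by a direct telescoping) and read off the result.

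I expect step (ii) — the factorization $\mathcal{M}_1(\hat A) = \mathcal{M}_1(\hat H^\top)\,\mathcal{M}_1(\hat H)$ — together with getting all transposes consistent in step (iii) to be the main obstacle, since it is the only place where the special relation $\hat A = \hat H_s = \hat H(\hat H+\hat H^\top - A)^{-1}\hat H^\top$ is actually used rather than just block-multiplication bookkeeping. Concretely, composing $\mathcal{M}_1(\hat H^\top)$ and $\mathcal{M}_1(\hat H)$ gives, in the $(1,1)$ block, $(I - \hat H^{-\top}A)(I - \hat H^{-1}A) = I - \hat H_s^{-1}A$ by exactly the identity $I - \hat A_s^{-1}A = (I-\hat A^{-\top}A)(I-\hat A^{-1}A)$ quoted in the proof of \cref{thrm:sp_Ps} (with $\hat A\mapsto\hat H$), so the $(1,1)$ block matches $I - \hat A^{-1}A$; the $(1,2)$ block is $-(I-\hat H^{-\top}A)\hat H^{-1}B^\top - \hat H^{-\top}B^\top = -[(I-\hat H^{-\top}A)\hat H^{-1} + \hat H^{-\top}]B^\top$, and one must check the bracket equals $\hat A^{-1} = \hat H_s^{-1} = \hat H^{-\top}(\hat H+\hat H^\top - A)\hat H^{-1} = \hat H^{-\top} + \hat H^{-1} - \hat H^{-\top}A\hat H^{-1}$ — which it does, since $(I - \hat H^{-\top}A)\hat H^{-1} + \hat H^{-\top} = \hat H^{-1} - \hat H^{-\top}A\hat H^{-1} + \hat H^{-\top}$. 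The off-diagonal $(2,1)$ and $(2,2)$ blocks of $\mathcal{M}_1$ are trivial (zero and identity), so they pose no difficulty. Once (ii) and (iii) are verified by these elementary but transpose-sensitive computations, step (iv) is a one-line induction and the conclusion follows immediately.
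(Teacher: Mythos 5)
Your proof is correct and follows essentially the same route as the paper: the paper simply asserts the two one-step factorizations $\mathcal{M}_\ell(\hat{H}_s,\hat{S}) = \mathcal{M}_2(\hat{S})\mathcal{M}_1(\hat{H}^\top)\mathcal{M}_1(\hat{H})$ and $\mathcal{M}_s(\hat{H}^\top,\hat{S}) = \mathcal{M}_1(\hat{H})\mathcal{M}_2(\hat{S})\mathcal{M}_1(\hat{H}^\top)$ and then "rearranges," whereas you derive the first factorization in two sub-steps $\mathcal{M}_\ell(\hat H_s,\hat S)=\mathcal{M}_2(\hat S)\mathcal{M}_1(\hat H_s)$ and $\mathcal{M}_1(\hat H_s)=\mathcal{M}_1(\hat H^\top)\mathcal{M}_1(\hat H)$, the latter correctly verified blockwise via $\hat H_s^{-1}=\hat H^{-\top}+\hat H^{-1}-\hat H^{-\top}A\hat H^{-1}$. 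Your brief mid-proposal worry about transposes is harmless and resolved by your own calculation; the telescoping/regrouping in step (iv) is exactly the "rearranging the terms" the paper intends.
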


\begin{proof}
For the iteration matrices we have the representations
\[
 \mathcal{M}_\ell(\hat{H}_s,\hat{S}) = \mathcal{M}_2(\hat{S}) \mathcal{M}_1(\hat{H}^\top) \mathcal{M}_1(\hat{H}),
\quad 
  \mathcal{M}_s(\hat{H}^\top,\hat{S}) = \mathcal{M}_1(\hat{H})\mathcal{M}_2(\hat{S}) \mathcal{M}_1(\hat{H}^\top).
\]
The result follows then easily by rearranging the terms.
\end{proof}

This simple lemma allows to use the already analyzed smoothing properties for the symmetric Uzawa method $\mathcal{P}_s$ to analyze the block triangular case $\mathcal{P}_\ell(\hat{A},\hat{S})$.

\begin{theorem}\label{thrm:sp_Pl} 
Assume that $A$ is symmetric and positive definite.
Let $\hat{A}$ and $\hat{S}$ be symmetric and positive definite matrices satisfying
\begin{equation} \label{Condl}
  \hat{A} \ge A 
  \quad \text{and} \quad 
  \hat{S} \ge C + B A^{-1} B^\top,
\end{equation}
then there holds the smoothing property
\[
  \|\mathcal{A} (\mathcal{M}_\ell)^\nu\|_{\mathcal{L}\times \mathcal{L}}
   \le \sqrt{2}\, \eta(\nu-1) \, \|\mathcal{D}_d \|_{\mathcal{L}\times \mathcal{L}}
   \quad \text{with} \quad  
   \mathcal{D}_d = \begin{bmatrix} \hat{A} & 0 \\ 0 & \hat{S} \end{bmatrix}.
\] 
\end{theorem}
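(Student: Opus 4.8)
The plan is to use \cref{MellMs} to express $(\mathcal{M}_\ell)^\nu$ through the symmetric iteration, whose decisive factorisation is already contained in the proof of \cref{thrm:sp_Ps}, and then to conclude with \cref{first}. I would first assume the strict inequalities $\hat{A}>A$, $\hat{S}>C+BA^{-1}B^\top$ and take $\hat{H}=\hat{A}+\hat{A}^{1/2}(I-\hat{A}^{-1/2}A\hat{A}^{-1/2})^{1/2}\hat{A}^{1/2}$, so that $\hat{A}=\hat{H}_s$, $G:=\hat{H}+\hat{H}^\top-A>0$ and $\hat{H}-A>0$; the non‑strict case \eqref{Condl} is recovered at the end by the closure argument of \cref{thrm:sp_Ps} (letting $\hat{A}\to\hat{A}+\varepsilon I$, $\hat{S}\to\hat{S}+\varepsilon I$). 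Since $A$ is symmetric positive definite and $(\hat{H}^\top,\hat{S})$ then satisfies \eqref{Conds} in strict form, the proof of \cref{thrm:sp_Ps}, applied with $\hat{A}$ replaced by $\hat{H}^\top$, shows that $\widebar{\mathcal{M}}_s:=\mathcal{X}_s\mathcal{M}_s\mathcal{X}_s^{-1}$, with $\mathcal{M}_s:=\mathcal{M}_s(\hat{H}^\top,\hat{S})$ and $\mathcal{X}_s:=\mathcal{X}_s(\hat{H}^\top,\hat{S})$, admits the factorisation required in \cref{AR}, so that $\|(I-\widebar{\mathcal{M}}_s)\,\widebar{\mathcal{M}}_s^{\,\nu-1}\|\le\eta(\nu-1)$.

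The next step is to bring $\mathcal{A}(\mathcal{M}_\ell)^\nu$ into a form matching this estimate. A direct block computation gives $I-\mathcal{M}_2(\hat{S})\mathcal{M}_1(\hat{H}^\top)=\mathcal{P}_\ell(\hat{H}^\top,\hat{S})^{-1}\mathcal{A}$, hence, by $\mathcal{A}\mathcal{M}=(\mathcal{P}-\mathcal{A})(I-\mathcal{M})$ from the proof of \cref{first},
\[
  \mathcal{A}\,\mathcal{M}_2(\hat{S})\mathcal{M}_1(\hat{H}^\top)=(\mathcal{P}_\ell(\hat{H}^\top,\hat{S})-\mathcal{A})\,\mathcal{P}_\ell(\hat{H}^\top,\hat{S})^{-1}\mathcal{A}.
\]
Plugging this into the identity of \cref{MellMs} and using, again from the proof of \cref{first} together with $\mathcal{P}_s(\hat{H}^\top,\hat{S})=\mathcal{X}_s^\top\mathcal{J}\mathcal{X}_s$, that $\mathcal{A}(\mathcal{M}_s)^{\nu-1}=\mathcal{X}_s^\top\mathcal{J}(I-\widebar{\mathcal{M}}_s)\widebar{\mathcal{M}}_s^{\,\nu-1}\mathcal{X}_s$, one arrives at
\[
  \mathcal{A}(\mathcal{M}_\ell)^\nu=\mathcal{C}\,(I-\widebar{\mathcal{M}}_s)\,\widebar{\mathcal{M}}_s^{\,\nu-1}\,\mathcal{X},\qquad
  \mathcal{C}:=(\mathcal{P}_\ell(\hat{H}^\top,\hat{S})-\mathcal{A})\,\mathcal{P}_\ell(\hat{H}^\top,\hat{S})^{-1}\mathcal{X}_s^\top\mathcal{J},\quad \mathcal{X}:=\mathcal{X}_s\mathcal{M}_1(\hat{H}).
\]
Testing against arbitrary $\zb,\yb$ and inserting the bound on $\|(I-\widebar{\mathcal{M}}_s)\widebar{\mathcal{M}}_s^{\,\nu-1}\|$, it then suffices to show $\mathcal{X}^\top\mathcal{X}\le\mathcal{D}_d$ and $\mathcal{C}\mathcal{C}^\top\le 2\,\mathcal{D}_d$, because these give $\|\mathcal{X}\zb\|\le\|\mathcal{D}_d\|_{\mathcal{L}\times\mathcal{L}}^{1/2}\|\zb\|_{\mathcal{L}}$ and $\|\mathcal{C}^\top\yb\|\le\sqrt{2}\,\|\mathcal{D}_d\|_{\mathcal{L}\times\mathcal{L}}^{1/2}\|\yb\|_{\mathcal{L}}$, whence $|\langle\mathcal{A}(\mathcal{M}_\ell)^\nu\zb,\yb\rangle|\le\sqrt{2}\,\eta(\nu-1)\,\|\mathcal{D}_d\|_{\mathcal{L}\times\mathcal{L}}\,\|\zb\|_{\mathcal{L}}\|\yb\|_{\mathcal{L}}$.

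Both matrix inequalities rest on cancellations. In $\mathcal{X}=\mathcal{X}_s\mathcal{M}_1(\hat{H})$ the unit upper triangular factor of $\mathcal{X}_s$ cancels the one in $\mathcal{M}_1(\hat{H})$, leaving $\mathcal{X}=\diag\!\big((\hat{H}^\top G^{-1}\hat{H})^{1/2}(I-\hat{H}^{-1}A),\ \hat{S}^{1/2}\big)$, and the identity $\hat{H}_s-A=(\hat{H}^\top-A)G^{-1}(\hat{H}-A)$ from the proof of \cref{thrm:sp_Ps} yields $\mathcal{X}^\top\mathcal{X}=\diag(\hat{A}-A,\hat{S})\le\mathcal{D}_d$. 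Likewise, the lower‑left blocks of $\mathcal{P}_\ell(\hat{H}^\top,\hat{S})^{-1}$ and of $\mathcal{X}_s^\top$ cancel, so $\mathcal{P}_\ell(\hat{H}^\top,\hat{S})^{-1}\mathcal{X}_s^\top\mathcal{J}$ is block diagonal, and since $\mathcal{P}_\ell(\hat{H}^\top,\hat{S})-\mathcal{A}$ is block upper triangular with diagonal blocks $\hat{H}^\top-A$ and $C-\hat{S}$ one obtains
\[
  \mathcal{C}=\begin{bmatrix}(I-A\hat{H}^{-\top})(\hat{H}^\top G^{-1}\hat{H})^{1/2} & -B^\top\hat{S}^{-1/2}\\[0.25em] 0 & (C-\hat{S})\hat{S}^{-1/2}\end{bmatrix}.
\]
Viewing $\mathcal{D}_d^{-1/2}\mathcal{C}$ as two block rows stacked vertically, $\|\mathcal{D}_d^{-1/2}\mathcal{C}\|^2\le\|\text{row}_1\|^2+\|\text{row}_2\|^2$. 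The second row has norm $\le1$ since $0\le\hat{S}-C\le\hat{S}$. For the first row, the sum of its two blocks times their transposes equals $\hat{A}^{-1/2}\big((\hat{A}-A)+B^\top\hat{S}^{-1}B\big)\hat{A}^{-1/2}$, which is $\le I$ because $\hat{S}\ge C+BA^{-1}B^\top\ge BA^{-1}B^\top$ is equivalent to the dual inequality $B^\top\hat{S}^{-1}B\le A$; hence that row has norm $\le1$ as well. Therefore $\|\mathcal{D}_d^{-1/2}\mathcal{C}\|^2\le 2$, i.e.\ $\mathcal{C}\mathcal{C}^\top\le 2\,\mathcal{D}_d$, which finishes the argument.

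I expect the main difficulty to be the algebraic book‑keeping: verifying the two cancellations that make $\mathcal{X}$ diagonal and $\mathcal{C}$ block triangular, together with the genuinely load‑bearing point that the constant $\sqrt{2}$ has to be extracted by splitting $\mathcal{D}_d^{-1/2}\mathcal{C}$ into its two block rows (subadditivity of $\|\cdot\|^2$ over rows), since a crude norm bound for the triangular matrix $\mathcal{D}_d^{-1/2}\mathcal{C}$ would produce a strictly larger constant.
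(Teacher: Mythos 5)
Your proof is correct and follows essentially the same strategy as the paper: reduce $(\mathcal{M}_\ell)^\nu$ via \cref{MellMs} to the symmetric iteration, invoke the factorisation of $\widebar{\mathcal{M}}_s$ from the proof of \cref{thrm:sp_Ps} (with $\hat{A}$ replaced by $\hat H^\top$) together with \cref{AR} to bound the middle factor by $\eta(\nu-1)$, and then estimate the two outer factors by $\mathcal{D}_d$. The paper obtains the decomposition $\mathcal{A}(\mathcal{M}_\ell)^\nu=\mathcal{V}^\top(I-\widebar{\mathcal{M}}_s)\widebar{\mathcal{M}}_s^{\,\nu-1}\mathcal{U}$ by writing $\mathcal{A}(\mathcal{M}_\ell)^\nu = \mathcal{P}_\ell(I-\mathcal{M}_\ell)(\mathcal{M}_\ell)^\nu$ and setting $\mathcal{U}=\widetilde{\mathcal{X}}_s\mathcal{M}_1(\hat H)$, $\mathcal{V}^\top=\mathcal{P}_\ell\mathcal{M}_2(\hat S)\mathcal{M}_1(\hat H^\top)\widetilde{\mathcal{X}}_s^{-1}$, then verifies by explicit block computation that $\mathcal{U}^\top\mathcal{U}=\diag(\hat A-A,\hat S)$ and $\mathcal{V}^\top\mathcal{V}\le2\mathcal{D}_d$ (the latter via the rank-type decomposition $\begin{bmatrix}\hat S^{-1}&\hat S^{-1}\\\hat S^{-1}&\hat S^{-1}\end{bmatrix}\le2\diag(\hat S^{-1},\hat S^{-1})$ and the dual inequality $B^\top\hat S^{-1}B\le A$). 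You arrive at the same right factor $\mathcal{X}=\mathcal{U}$, but route the left factor through the identity $\mathcal{A}\mathcal{M}_2(\hat S)\mathcal{M}_1(\hat H^\top)=(\mathcal{P}'-\mathcal{A})\mathcal{P}'^{-1}\mathcal{A}$ with $\mathcal{P}'=\mathcal{P}_\ell(\hat H^\top,\hat S)$, which makes the resulting $\mathcal{C}$ manifestly block upper triangular; you then extract the $\sqrt2$ via the row-norm subadditivity $\|\mathcal{D}_d^{-1/2}\mathcal{C}\|^2\le\|\text{row}_1\|^2+\|\text{row}_2\|^2$, bounding each block row by $1$ using $\hat A-A+B^\top\hat S^{-1}B\le\hat A$ and $0\le\hat S-C\le\hat S$. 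This is algebraically a bit tidier (one sees at a glance why the constant is exactly $\sqrt2$ and not more), and indeed $\mathcal{C}\mathcal{C}^\top$ agrees with the paper's $\mathcal{V}^\top\mathcal{V}$, so the two arguments are equivalent up to bookkeeping. The closure argument for passing from the strict to the non-strict inequalities in \eqref{Condl} is handled as in the paper.
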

\begin{proof}
By applying \cref{MellMs} we obtain for $\mathcal{P}_\ell = \mathcal{P}_\ell(\hat{A},\hat{S})$, $\mathcal{M}_\ell = \mathcal{M}_\ell(\hat{A},\hat{S})$ and $\mathcal{M}_s = \mathcal{M}_s(\hat{H}^\top,\hat{S})$ the identity
\[
  \mathcal{A} (\mathcal{M}_\ell)^\nu = \mathcal{P}_\ell (I - \mathcal{M}_\ell) (\mathcal{M}_\ell)^\nu \\
   = \mathcal{P}_\ell \mathcal{M}_2(\hat{S}) \mathcal{M}_1(\hat{H}^\top) \, (I - \mathcal{M}_s) (\mathcal{M}_s)^{\nu-1}  \, \mathcal{M}_1(\hat{H}).
\]
For the same similarity transformation as in the proof of \cref{thrm:sp_Ps} but with $\hat{A}$ replaced by $\hat{H}^\top$, we obtain 
\[
  \widebar{\mathcal{M}}_s
     = \widetilde{\mathcal{X}}_s^{} \mathcal{M}_s^{} \widetilde{\mathcal{X}}_s^{-1}
  \quad \text{with} \quad
  \widetilde{\mathcal{X}}_s = \widetilde{\mathcal{D}}_s^{1/2} \begin{bmatrix} I & \hat{H}^{-1} B^\top \\ 0 & I \end{bmatrix},
  \quad 
  \widetilde{\mathcal{D}}_s = \begin{bmatrix} (\hat{H}^\top)_s & 0 \\ 0 & \hat{S} \end{bmatrix} ,
\]
which implies
\[
  \mathcal{P}_\ell (I - \mathcal{M}_\ell) (\mathcal{M}_\ell)^\nu \\
    = \mathcal{V}^\top \,  
        (I -  \widebar{\mathcal{M}}_s) (\widebar{\mathcal{M}}_s)^{\nu-1}  \, 
        \mathcal{U},
\]
with
$\mathcal{U} = \widetilde{\mathcal{X}}_s \mathcal{M}_1(\hat{H})$ and $\mathcal{V}^\top = \mathcal{P}_\ell \mathcal{M}_2(\hat{S}) \mathcal{M}_1(\hat{H}^\top) \,  \widetilde{\mathcal{X}}_s^{-1}$. Hence, we conclude for all $\zb, \yb \in \mathbb{R}^{n+m}$ the estimate
\begin{align*}
  \langle \mathcal{P}_\ell (I - \mathcal{M}_\ell) (\mathcal{M}_\ell)^\nu \zb,\yb \rangle
    & = \langle  
        (I -  \widebar{\mathcal{M}}_s) (\widebar{\mathcal{M}}_s)^{\nu-1}  \, 
        \mathcal{U} \zb, \mathcal{V} \yb \rangle \\
    & \le \|(I -  \widebar{\mathcal{M}}_s) (\widebar{\mathcal{M}}_s)^{\nu-1}\|  \, 
        \|\mathcal{U} \zb \| \, \|\mathcal{V} \yb \|.
\end{align*}
In order to estimate the last two terms we consider the products. Elementary computations show that
\[
  \mathcal{U}^\top\mathcal{U} = \begin{bmatrix} \hat{A} - A & 0 \\ 0 & \hat{S} \end{bmatrix}
\quad
\text{and}
\quad
  \mathcal{V}^\top \mathcal{V}
    = \begin{bmatrix} 
          \hat{A} - A + B^\top \hat{S}^{-1} B & B^\top \hat{S}^{-1} (\hat{S} - C) \\
          (\hat{S} - C)\hat{S}^{-1} B & (\hat{S} - C)\hat{S}^{-1}(\hat{S} - C) 
        \end{bmatrix},
\]
for which the following estimates hold
\[
  \mathcal{U}^\top\mathcal{U} \le \begin{bmatrix} \hat{A} & 0 \\ 0 & \hat{S} \end{bmatrix} = \mathcal{D}_d,
\]
and
\begin{align*}
  \mathcal{V}^\top \mathcal{V}
   & = \begin{bmatrix} 
          \hat{A} - A & 0 \\
          0 & 0 
        \end{bmatrix}
      + \begin{bmatrix} B^\top & 0 \\ 0 & \hat{S} - C \end{bmatrix}
        \begin{bmatrix} \hat{S}^{-1} & \hat{S}^{-1} \\ \hat{S}^{-1} & \hat{S}^{-1} \end{bmatrix}
        \begin{bmatrix} B  & 0 \\ 0 & \hat{S} - C \end{bmatrix} \\
   & \le \begin{bmatrix} 
          \hat{A} - A & 0 \\
          0 & 0 
        \end{bmatrix}
      + 2 \begin{bmatrix} B^\top \hat{S}^{-1} B & 0 \\ 0 & (\hat{S} - C) \hat{S}^{-1}(\hat{S} - C) \end{bmatrix}
     \le \begin{bmatrix} 
          \hat{A} + A & 0 \\
          0 & 2\hat{S} 
        \end{bmatrix} 
    \le 2 \, \mathcal{D}_d.
\end{align*}
This implies
\[
  \|\mathcal{U}z\|^2 \le \langle \mathcal{D}_d \zb, \zb \rangle \le \|\mathcal{D}_d\|_{\mathcal{L}\times \mathcal{L}} \|\zb\|_\mathcal{L}^2
  \quad \text{and} \quad
  \|\mathcal{V}y\|^2 \le \langle 2 \, \mathcal{D}_d \yb, \yb\rangle \le 2 \, \|\mathcal{D}_d\|_{\mathcal{L}\times \mathcal{L}} \|\yb\|_\mathcal{L}^2.
\]
Finally,  we estimate the term $\|(I -  \widebar{\mathcal{M}}_s) (\widebar{\mathcal{M}}_s)^{\nu-1}\|$ by applying \cref{AR} which leads to
\begin{align*}
  \langle \mathcal{P}_\ell (I - \mathcal{M}_\ell) (\mathcal{M}_\ell)^\nu \zb, \yb \rangle
    & \le \sqrt{2}\, \eta(\nu-1) \|\mathcal{D}_d\|_{\mathcal{L}\times \mathcal{L}} \|\zb\|_\mathcal{L} \|\yb\|_\mathcal{L}, 
\end{align*}
and thus the proof is completed.
\end{proof}

\begin{remark}
The same smoothing property as in \cref{thrm:sp_Pl} can be shown for $\mathcal{P} = \mathcal{P}_u$. The proof is completely analogous. 
\end{remark}

\begin{remark}
Compare the conditions on $A$ of \cref{thrm:sp_Pf} with those of \cref{thrm:sp_d,thrm:sp_Ps,thrm:sp_Pl} and observe that $\mathcal{P} = \mathcal{P}_f$ is the only class, for which the smoothing property could be shown under assumptions which do not necessarily require a non-singular matrix $A$. 
\end{remark}

\section{Application to the Stokes problem and numerical results}\label{sec:numerics}

In this section, we apply the iterative method \eqref{PA} with the individual preconditioners $\mathcal{P}_\ell$, $\mathcal{P}_u$, $\mathcal{P}_f$, and $\mathcal{P}_s$ as smoothers in the context of multigrid methods with application to the Stokes problem. Numerical results demonstrate robustness and efficiency of the methods. 

Let us denote by $\Omega \subset \mathbb{R}^3$ a polyhedral domain.  For a given forcing term $f \in L^2(\Omega)^3$, the Stokes problem with homogeneous Dirichlet boundary conditions reads as
\begin{equation}\label{eq:stokes}
\begin{alignedat}{3}
-\Delta u + \nabla p &= f &\qquad& \text{in } &&\Omega,\\
\mathop{\rm\,div}u &= 0 && \text{in } &&\Omega, \\
u &= 0 && \text{on } &&\partial \Omega.
\end{alignedat}
\end{equation}
Here, $u$ denotes the fluid velocity, and $p$ the pressure. Note, the pressure is only well-defined up to an additive constant, thus we exclude those by $\langle p, 1\rangle_\Omega = 0$, which motivates the space
$L_0^2(\Omega) = \{ q \in L^2(\Omega)\, :\, \langle q, 1 \rangle_\Omega = 0 \}$ for the pressure.  Here, $\langle \cdot, \cdot \rangle_\Omega$ denotes the inner product in $L^2(\Omega)$. For the existence and uniqueness of a solution $(u,p)$ of \eqref{eq:stokes} in $V \times Q = H_0^1(\Omega)^3 \times L_0^2(\Omega)$, see, e.g., \cite{brezzi-fortin_book, elman-silvester-wathen_2005, girault-raviart_1986}.

Based on an initial triangulation, we construct a sequence of meshes $\mathcal{T} = \{\mathcal{T}_\ell\}_{\ell \geq 0}$ by successive uniform refinement, $\ell$ denotes the refinement level. For the discretization we use linear finite elements, i.e., for a given mesh $\mathcal{T}_\ell \in \mathcal{T}$ we define the function space of piecewise linear and globally continuous functions by
$S^1_\ell (\Omega) = \{ v \in \mathcal{C}(\overline{\Omega}) :  v|_T \in P_1(T),~\forall \, T \in \mathcal{T}_\ell \}$.
The discrete finite element spaces for velocity and pressure are then given by
\begin{align*}
V_\ell = [S^1_\ell(\Omega) \cap H^1_0(\Omega)]^3, \qquad Q_\ell = S^1_\ell(\Omega) \cap L^2_0(\Omega).
\end{align*}
Since this finite element pair does not satisfy the discrete inf--sup condition we need to stabilize the method. Here, we consider the standard PSPG stabilization, see, e.g., \cite{brezzi-douglas_1988}. This approach leads to finding $(u_\ell, p_\ell) \in V_\ell \times Q_\ell$ such that \eqref{discrB} is satisfied, where the bilinear and linear forms are given by
\begin{align*}
a(u,v) = \langle \nabla u, \nabla v \rangle_{\Omega}, \quad b(u, q) = -\langle \mathop{\rm div} u, q \rangle_{\Omega}, \quad f(v) = \langle f, v \rangle_{\Omega}.
\end{align*}
Furthermore, the level-dependent stabilization terms $c_\ell(\cdot,\cdot)$ and $g_\ell(\cdot)$ are given by
\begin{align*}
c_\ell(p_\ell,q_\ell) = \sum_{T \in \mathcal{T}_\ell} \delta_T\, h_{T}^2 \,\langle \nabla p_\ell, \nabla q_\ell \rangle_T,
\quad
g_\ell(q_\ell) = -\sum_{T \in \mathcal{T}_\ell} \delta_T\, h_{T}^2 \, \langle f, \nabla q_\ell \rangle_T,
\end{align*}
where $h_{T} = (\int_T  dx)^{1/3}$ denotes the local mesh size of the element $T \in \mathcal{T}_\ell$. The stabilization parameter $\delta_T > 0$ has to be chosen carefully, according to \cite{elman-silvester-wathen_2005}, $\delta_T = 1/12$ is a good choice in practice. 

\subsection{The approximation property}
As pointed out in \cref{sec:framework} we need to specify the $L^2$-like norm which depends on the Stokes problem. Therefore, let $h_\ell = \min_{T \in \mathcal{T}_\ell} h_T$ denote the mesh size of $\mathcal{T}_\ell$.
For the mesh-depending norm on $V_\ell \times Q_\ell$ we choose
\begin{equation} \label{ell-norm}
   \| (v_\ell, q_\ell) \|_{0,\ell} = \left(h_\ell^{-2}\,  \| v_\ell \|_{L^2(\Omega)}^2 + \| q_\ell \|_{L^2(\Omega)}^2 \right)^{1/2}, 
\end{equation}
and assume that
\begin{equation} \label{scaling}
  \| (v_\ell, q_\ell) \|_{0,\ell} \sim \left(\| \vb_\ell \|_{l^2}^2 + \|\qb_\ell \|_{l^2}^2\right)^{1/2},
\end{equation}
which can be always achieved by suitable scaling of the bases in $V_\ell$ and $Q_\ell$. The associated second mesh-dependent norm $\| (v_\ell, q_\ell) \|_{2,\ell}$ is then defined according to \eqref{discr2ell}.

Observe that 
\[
  (\bar{u}_\ell^{(\nu+1)},\bar{p}_\ell^{(\nu+1)}) - (u_\ell,p_\ell) = [I - P_{\ell}^{\ell-1}] [(u_\ell^{(\nu)}, p_\ell^{(\nu)}) - (u_\ell,p_\ell)],
\]
where $P_{\ell}^{\ell-1} \colon V_\ell \times Q_\ell \rightarrow V_{\ell-1} \times Q_{\ell-1}$ is defined by
\[
  \mathcal{B}_{\ell-1}(P_{\ell}^{\ell-1}(w_\ell,r_\ell),(v_{\ell-1},q_{\ell-1})) = \mathcal{B}_{\ell}((w_\ell,r_\ell),(v_{\ell-1},q_{\ell-1})),
\]
for all $(w_\ell,r_\ell) \in V_\ell \times Q_\ell$ and $(v_{\ell-1},q_{\ell-1}) \in V_{\ell-1} \times Q_{\ell-1}$. Therefore, the approximation property \eqref{approxProp} for the Stokes problem is an immediate consequence of the following result.

\begin{theorem}\label{thm:approxprop}
Assume that the solution of \eqref{eq:stokes} satisfies the regularity condition
\begin{equation} \label{regularity}
  \|u\|_{H^2(\Omega)} + \|p\|_{H^1(\Omega)} \le c_R \, \|f\|_{L^2(\Omega)} .
\end{equation}
Then we have
\[
  \|[I - P_{\ell}^{\ell-1}] (w_\ell,r_\ell)\|_{0,\ell} \le c_A \, \|(w_\ell,r_\ell)\|_{2,\ell} \quad \forall (w_\ell,r_\ell) \in V_\ell \times Q_\ell,
\]
 with a constant $c_A$ which is independent of the level $\ell$.
\end{theorem}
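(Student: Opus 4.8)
\emph{Approach.} The plan is to establish \cref{thm:approxprop} by a duality (Aubin--Nitsche) argument combined with the standard a~priori error analysis of the stabilized discretization, in the spirit of \cite{verfuerth:1988,brenner:1996,brenner-li-sung_2014} and the abstract framework of \cite{schoeberl_2003}. Fix $(w_\ell,r_\ell)\in V_\ell\times Q_\ell$. The observation that drives everything is that $(w_\ell,r_\ell)$ is itself the level-$\ell$ discrete solution of \eqref{discrB} for the functional $\mathcal F^\star(\cdot):=\mathcal B_\ell((w_\ell,r_\ell),\cdot)$, while, by the defining relation of $P_\ell^{\ell-1}$, the function $P_\ell^{\ell-1}(w_\ell,r_\ell)$ is the level-$(\ell-1)$ discrete solution for the restriction of $\mathcal F^\star$ to $V_{\ell-1}\times Q_{\ell-1}$ (with $c_\ell$ replaced by $c_{\ell-1}$ in the coarse form). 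Writing $\mathcal F^\star(v,q)=\langle f^\star,v\rangle_\Omega+g^\star(q)$ with $f^\star$ the $L^2$-representative of $a(w_\ell,\cdot)+b(\cdot,r_\ell)$ and $g^\star:=b(w_\ell,\cdot)-c_\ell(r_\ell,\cdot)$, the definition of $\|\cdot\|_{2,\ell}$ in \eqref{discr2ell} together with \eqref{ell-norm} and the equivalence \eqref{scaling} shows that $\|(w_\ell,r_\ell)\|_{2,\ell}\sim\bigl(h_\ell^{2}\|f^\star\|_{L^2(\Omega)}^2+\|g^\star\|_{\star}^2\bigr)^{1/2}$, with $\|\cdot\|_{\star}$ the natural $L^2$-type dual norm of the continuity-equation data; one uses here that the supremising test functions in \eqref{discr2ell} may be taken to have a pure velocity or a pure pressure part. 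Consequently $[I-P_\ell^{\ell-1}](w_\ell,r_\ell)=(u_\ell-u_{\ell-1},\,p_\ell-p_{\ell-1})$, the difference of the two discrete solutions for the data $(f^\star,g^\star)$, and \cref{thm:approxprop} reduces to the two-level error bound
\[
  h_\ell^{-1}\|u_\ell-u_{\ell-1}\|_{L^2(\Omega)}+\|p_\ell-p_{\ell-1}\|_{L^2(\Omega)}
   \;\le\; c_A\,\bigl(h_\ell\,\|f^\star\|_{L^2(\Omega)}+\|g^\star\|_{\star}\bigr).
\]

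\emph{Regularity of the auxiliary problem.} Introduce the continuous (generalized) Stokes solution $(u,p)\in V\times Q$ of $-\Delta u+\nabla p=f^\star$, $\mathop{\rm div}u=-g^\star$ in $\Omega$, $u=0$ on $\partial\Omega$; subtracting a Bogovskii lift of $g^\star$ (see \cite{girault-raviart_1986}) reduces this to the homogeneous problem \eqref{eq:stokes}, so the regularity hypothesis \eqref{regularity} yields
\[
  \|u\|_{H^2(\Omega)}+\|p\|_{H^1(\Omega)}\;\le\; C\,\bigl(\|f^\star\|_{L^2(\Omega)}+h_\ell^{-1}\|g^\star\|_{\star}\bigr),
\]
where the factor $h_\ell^{-1}$ stems from the fact that the continuity-equation datum is only a (mesh-dependent) finite-element quantity, to which an inverse estimate on the fine mesh has to be applied in order to pass from the $L^2$- to the $H^1$-type norm required for $H^2$-regularity of $u$.

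\emph{Assembly.} Now combine the triangle inequality $\|u_\ell-u_{\ell-1}\|_{L^2}\le\|u-u_\ell\|_{L^2}+\|u-u_{\ell-1}\|_{L^2}$ (and the analogue for the pressure) with the classical a~priori $L^2$-error estimates for the PSPG-stabilized $P_1$--$P_1$ discretization — order $h^2$ for the velocity and order $h$ for the pressure, both in terms of $\|u\|_{H^2(\Omega)}+\|p\|_{H^1(\Omega)}$, see \cite{brezzi-douglas_1988,elman-silvester-wathen_2005} — on levels $\ell$ and $\ell-1$, using $h_{\ell-1}\le 2h_\ell$. Inserting the regularity bound of the previous step, the two powers of $h_\ell$ gained from the approximation orders exactly cancel the $h_\ell^{-1}$ from the weak datum and the $h_\ell^{-1}$ in the left-hand norm \eqref{ell-norm}, producing the asserted estimate. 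The only nonstandard point is the mismatch $c_\ell\neq c_{\ell-1}$ in the coarse form: it means $(u_{\ell-1},p_{\ell-1})$ actually solves the level-$(\ell-1)$ problem with a functional perturbed by $(c_\ell-c_{\ell-1})(\cdot,\cdot)$ applied to the coarse pressure; since $c_\ell-c_{\ell-1}$ retains the $h_T^2$ weight and coarse finite-element functions obey inverse estimates, this perturbation is of the same (absorbable) order as the stabilization-consistency residual of the exact solution, so it contributes only a benign additional term once the stability bound $\|P_\ell^{\ell-1}(w_\ell,r_\ell)\|_{0}\le C\|(w_\ell,r_\ell)\|_{2,\ell}$ (from the inf--sup stability of the coarse stabilized problem) is invoked.

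\emph{Main obstacle.} The backbone — elliptic regularity, finite-element approximation theory, and duality — is classical; the real difficulty lies in the bookkeeping of the $h_\ell$-powers across the two mesh-dependent norms \eqref{ell-norm} and \eqref{discr2ell} in the presence of a datum that is only a finite-element function (hence only weakly regular) and of the level-dependent, non-nested bilinear forms $\mathcal B_\ell$. One must check, term by term, that every contribution generated either by the weakly consistent PSPG stabilization or by replacing $c_\ell$ with $c_{\ell-1}$ on the coarse level is genuinely of lower order, and that no power of $h_\ell$ is wasted through an over-eager inverse estimate; this is precisely where the particular choice of the weighting in \eqref{ell-norm} together with the scaling \eqref{scaling} is used in an essential way. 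If one prefers to argue purely variationally rather than through explicit discrete solutions, the corresponding technical core is the $\|\cdot\|_{2,\ell}$-stability of the projection $I-P_\ell^{\ell-1}$, which has to be derived from the definition of $P_\ell^{\ell-1}$ and an inf--sup argument on the coarse space.
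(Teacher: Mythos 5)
Your proposal is correct in outline and follows essentially the same route as the paper's own (unwritten) proof, which is delegated to Brenner \cite{brenner:1996}: a duality argument built on elliptic regularity, finite-element error estimates for the stabilized discretization, and the interpretation of $P_\ell^{\ell-1}$ as a Galerkin projection onto the coarse space. Two remarks on the differences. First, where Brenner's abstract Lemma~5.3 (and the standard Aubin--Nitsche route) uses the \emph{dual} Stokes problem with homogeneous divergence constraint, you instead reconstruct the \emph{primal} auxiliary problem with a nonzero divergence datum $g^\star\in Q_\ell$, pay an inverse estimate $\|g^\star\|_{H^1}\lesssim h_\ell^{-1}\|g^\star\|_{L^2}$ to obtain $H^2\times H^1$ regularity via a Bogovskii lift, and recover the lost power of $h_\ell$ from the $O(h^2)$ velocity error estimate. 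This bookkeeping does close, as you note, but it requires $H^1\to H^2$ boundedness of the Bogovskii operator (implicitly a further domain condition beyond \eqref{regularity}) and is less economical than arguing entirely through the dual problem, which only needs the hypothesis \eqref{regularity} as stated. Second, the paper explicitly identifies the one genuinely nonstandard ingredient: the inf--sup stability of the discrete PSPG problem in the \emph{standard} $H^1\times L^2$ norm, which does not follow directly from \cite{brezzi-douglas_1988} (where stability is proved in a related mesh-dependent norm) and requires the first step of Verf\"urth's trick \cite{verfuerth_stokes84}. In your writeup this stability is invoked implicitly — it underlies the cited $L^2$-error estimates and your remark ``from the inf--sup stability of the coarse stabilized problem'' — but is never flagged or justified; this is precisely the point the paper singles out as needing extra care. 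Finally, your handling of the $c_\ell\neq c_{\ell-1}$ mismatch is a plausible heuristic, but the claim that the perturbation is ``absorbable'' would need to be verified against the stabilization-consistency analysis term by term, since the fine-level $c_\ell(r_\ell,\cdot)$ appears as data for the coarse problem and is not the PSPG-consistent right-hand side associated with $f^\star$; this is where most of the $h_\ell$-power tracking you allude to actually lives.
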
 

For the proof of \cref{thm:approxprop} we refer to \cite{brenner:1996}. Several properties, see \cite[Lemma~5.3]{brenner:1996}, have to be shown in order to archive the estimate. In particular, the boundedness and stability of the continuous and discrete problem, inf--sup condition, discretization error estimates, and properties of the intergrid-transfer operators are required. All of these properties are well-known or follow directly from \cite{brezzi-douglas_1988}. Only the stability of the discrete problem requires some extra effort. In \cite{brezzi-douglas_1988} stability was shown with respect to a related mesh-dependent norm. Then the required stability in the standard norm follows easily by using the first step of Verf\"urth's trick \cite{verfuerth_stokes84}. We omit the details.

\subsection{The smoothing procedures}

In the following, we discuss the smoothing procedures in terms of numerical results. The matrix $A$, given by the bilinear form $a(\cdot, \cdot)$ on $V_\ell \times V_\ell$, is symmetric and positive definite. The lower triangular part, including the diagonal, of $A$ is denoted by $L$. 
Furthermore, let $M_v$ and $M_q$ denote the mass matrices on $V_\ell$ and on $Q_\ell$ representing $\langle u_\ell,v_\ell\rangle_\Omega$ and $\langle p_\ell, q_\ell\rangle_\Omega$, respectively.
For the remaining part of the section, we set
\[
  \hat{A} = L^\top, \quad \hat{A}_s = L (L + L^\top - A)^{-1} L^\top 
    \quad \text{and} \quad
  \hat{S} = \omega^{-1} \, \diag M_q,
\]
where $\omega > 0$ denotes the damping parameter.

\begin{remark}
If $\hat{A}$, $\hat{A}_s$ and $\hat{S}$ are interpreted as preconditioners of a preconditioned Richardson method, then $\hat{A}$ and $\hat{A}_s$ correspond to backward- and symmetric Gauss--Seidel method applied to $A$, respectively. Moreover, $\hat{S}$ corresponds to the $\omega$-damped Jacobi method applied to $M_q$.
\end{remark}

In order to satisfy the assumptions \eqref{Condf}, \eqref{Conds}, and \eqref{Condl} of the theoretical results obtained in \cref{sec:smoothing_property}, the damping parameter $\omega$ has to satisfy the inequality
\begin{equation}\label{omega1}
\lambda_\text{max} ((\diag M_q)^{-1} (C + B X^{-1} B^\top)) \leq \omega^{-1},
\end{equation}
where $X = \hat{A}_s$ for $\mathcal{P}_f$ and $X = A$ for $\mathcal{P}_\ell$, $\mathcal{P}_u$, $\mathcal{P}_s$.

In order to finalize the convergence analysis we have to estimate the factors $\|\mathcal{D}_d\|_{\mathcal{L}\times\mathcal{L}}$, $\|\mathcal{D}_f\|_{\mathcal{L}\times\mathcal{L}}$, $\|\mathcal{D}_s\|_{\mathcal{L}\times\mathcal{L}}$, which appeared in the analysis of the smoothing properties. Here, $\mathcal{L}$ denotes the matrix which represents the mesh-dependent norm \eqref{ell-norm}, and is given by
\[
  \mathcal{L} 
    =  \begin{bmatrix} h_\ell^{-2} \, M_v & 0 \\ 0 & M_q\end{bmatrix} .
\]
Provided $\omega$ is not too small, i.e., there is a uniform constant $c_S$ with
\begin{equation}\label{omega2}
\omega^{-1} \leq c_S \, \lambda_\text{max} ((\diag M_q)^{-1} (C + B X^{-1} B^\top)),
\end{equation}
again with $X = \hat{A}_s$ for $\mathcal{P}_f$ and $X = A$ for $\mathcal{P}_\ell$, $\mathcal{P}_u$, $\mathcal{P}_s$.
Then, we easily see by standard scaling arguments that $\|\mathcal{D}_d\|_{\mathcal{L}\times\mathcal{L}}$, $\|\mathcal{D}_f\|_{\mathcal{L}\times\mathcal{L}}$, and
$\|\mathcal{D}_s\|_{\mathcal{L}\times\mathcal{L}}$ are uniformly bounded. So, eventually, we summarize the convergence analysis in the following result.

\begin{theorem}
Assume that the solution of the Stokes problem \eqref{eq:stokes} satisfies the regularity condition \eqref{regularity}. Then the two grid method is a contraction with a uniform contraction rate for all smoothing procedures discussed above which satisfy the associated scaling conditions \eqref{omega1} and \eqref{omega2}, respectively.
\end{theorem}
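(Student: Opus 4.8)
The strategy is to combine the two abstract ingredients that have already been established in the preceding sections: the \emph{smoothing property} of each block smoother (Theorems~\ref{thrm:sp_d}, \ref{thrm:sp_Pf}, \ref{thrm:sp_Ps}, \ref{thrm:sp_Pl} together with the remarks giving the $\mathcal{P}_u$-variant) and the \emph{approximation property} (Theorem~\ref{thm:approxprop}, which uses the $H^2$-regularity assumption \eqref{regularity}). Once both hold with constants independent of the level $\ell$, the standard two-grid argument recalled in \cref{sec:framework} yields a contraction rate $q = c_A\, c_S\, \eta(\nu) < 1$ for $\nu$ large enough, uniformly in $\ell$. So the whole proof reduces to checking that, for the concrete Stokes discretization and the concrete choices $\hat{A}=L^\top$, $\hat{A}_s = L(L+L^\top-A)^{-1}L^\top$, $\hat{S}=\omega^{-1}\diag M_q$, all the hypotheses of those theorems are met uniformly in $\ell$.

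First I would verify the \emph{algebraic} hypotheses on $\hat A$ and $\hat A_s$. The matrix $A$ is SPD, and $L$ (the lower-triangular part of $A$ including the diagonal) is nonsingular with $L+L^\top - A = \diag A > 0$, so $\hat A_s$ is well-defined and SPD and $\hat A + \hat A^\top = L + L^\top > A$; likewise $\hat A = L^\top \ge A$ is not generally true as a matrix inequality, but this is exactly why the block-triangular and symmetric theorems are stated with $\hat A$ replaced via the $\hat H$-construction of \cref{sec:smoothing_property}. For $\mathcal{P}_\ell$, $\mathcal{P}_u$, $\mathcal{P}_s$ the only remaining condition is $\hat S \ge C + B A^{-1} B^\top = S$, and for $\mathcal{P}_f$ it is $\hat A_s \ge A$ (which holds since $\hat A_r \ge A$, as shown in the proof of Theorem~\ref{thrm:sp_Ps}) together with $\hat S \ge C + B\hat A_s^{-1} B^\top$. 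In both cases this is a single spectral inequality of the form $\lambda_{\max}\big((\diag M_q)^{-1}(C + B X^{-1} B^\top)\big) \le \omega^{-1}$ with $X = A$ or $X = \hat A_s$, i.e. precisely the scaling condition \eqref{omega1}. Since $B X^{-1} B^\top \le B A^{-1} B^\top$ when $X \ge A$, the condition for $\mathcal{P}_f$ is implied by the one for the others, so a single choice of $\omega$ works for all four smoothers.

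Next I would control the \emph{norm factors} $\|\mathcal{D}_d\|_{\mathcal{L}\times\mathcal{L}}$, $\|\mathcal{D}_f\|_{\mathcal{L}\times\mathcal{L}}$, $\|\mathcal{D}_s\|_{\mathcal{L}\times\mathcal{L}}$ appearing in the smoothing estimates, where $\mathcal{L} = \operatorname{diag}(h_\ell^{-2} M_v,\ M_q)$. Here one uses standard finite-element scaling: $A \sim h_\ell \cdot I$ (in 3D, with appropriate basis scaling), $M_v \sim h_\ell^{3} I$, $M_q \sim h_\ell^{3} I$, $\diag M_q \sim h_\ell^3 I$, and $\lambda_{\max}((\diag M_q)^{-1} S) \sim 1$, so that with $\omega$ chosen of order one (this is the role of the lower bound \eqref{omega2}) the blocks $\hat A$, $\hat A_s$, $\hat S$ are each spectrally equivalent, after conjugation by $\mathcal{L}^{-1/2}$, to a bounded matrix. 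The off-diagonal block $\hat H^{-1}B^\top$ in $\mathcal{X}_s$ and the blocks of $\mathcal{V}^\top\mathcal{V}$ in the proof of Theorem~\ref{thrm:sp_Pl} are handled by the same scaling plus the inverse inequality $\|B^\top \hat S^{-1} B\| \lesssim \|A\|$. The outcome is that each $\|\mathcal{D}_\bullet\|_{\mathcal{L}\times\mathcal{L}} \le C$ with $C$ independent of $\ell$; combined with the $\eta(\nu)\to 0$ rate this gives the smoothing property with a level-independent $c_S$.

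Finally I would assemble: the approximation property from Theorem~\ref{thm:approxprop} (valid because $I - P_\ell^{\ell-1}$ is exactly the error propagation of the exact coarse-grid correction in the $\|\cdot\|_{2,\ell}$/$\|\cdot\|_{0,\ell}$ pair), together with the uniform smoothing property just obtained, imply $\|(\bar u_\ell^{(\nu+1)},\bar p_\ell^{(\nu+1)}) - (u_\ell,p_\ell)\|_{0,\ell} \le c_A c_S \eta(\nu)\,\|(u_\ell^{(0)},p_\ell^{(0)}) - (u_\ell,p_\ell)\|_{0,\ell}$, and choosing $\nu$ so large that $c_A c_S \eta(\nu) < 1$ gives the uniform contraction. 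I expect the main obstacle to be not any single inequality but the bookkeeping of the scaling constants: one must check that the conditions \eqref{omega1}–\eqref{omega2} are simultaneously satisfiable with a \emph{fixed} $\omega$ independent of $\ell$, which rests on the mesh-uniform spectral equivalence $\lambda_{\max}((\diag M_q)^{-1}(C+BX^{-1}B^\top)) \sim 1$; this in turn uses the quasi-uniformity of the refined meshes, the PSPG stabilization with $\delta_T$ of order one, and the discrete inf--sup stability established via Verf\"urth's trick as cited after Theorem~\ref{thm:approxprop}.
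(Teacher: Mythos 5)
Your proposal is correct and follows the same route the paper takes: the theorem is stated in the text as a summary of what was already established, namely combining the smoothing properties of \cref{sec:smoothing_property} (verified under the scaling condition \eqref{omega1}), the uniform boundedness of $\|\mathcal{D}_d\|_{\mathcal{L}\times\mathcal{L}}$, $\|\mathcal{D}_f\|_{\mathcal{L}\times\mathcal{L}}$, $\|\mathcal{D}_s\|_{\mathcal{L}\times\mathcal{L}}$ obtained by scaling under \eqref{omega2}, and the approximation property of \cref{thm:approxprop}, which together yield the two-grid contraction rate $q = c_A\, c_S\,\eta(\nu)<1$ uniformly in $\ell$ as recalled in \cref{sec:framework}. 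One small imprecision worth flagging: you write that ``$\hat A=L^\top\ge A$ is not generally true,'' but the relevant point is simply that $L^\top$ is not symmetric, and for $\mathcal{P}_\ell(\hat A_s,\hat S)$, $\mathcal{P}_u(\hat A_s,\hat S)$, $\mathcal{P}_f(\hat A_s,\hat S)$ the theorems are applied with the \emph{symmetric} matrix $\hat A_s = L(\diag A)^{-1}L^\top$, for which $\hat A_s - A = (L-\diag A)(\diag A)^{-1}(L-\diag A)^\top\ge 0$ holds automatically; the $\hat H$-construction you mention is a device inside the proof of \cref{thrm:sp_Pl}, not something that needs to be invoked again when checking the hypotheses here.
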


In the numerical test we use \eqref{omega1} with $X = \hat{A}_s$ for all classes, in order to avoid working with the computationally expensive exact Schur complement. The damping factor $\omega$ for $\hat{S}$ is 
approximately computed by applying a few steps of the power method to the generalized eigenvalue problem 
\begin{align}\label{eq:gen_eigv}
(\diag M_q)^{-1} (C + B \hat{A}_s^{-1} B^\top) \, \boldsymbol{q} = \lambda \,  \boldsymbol{q}, 
\end{align}
on a coarse mesh. 

\subsection{Numerical results}

In the following, we present several numerical experiments for the different Uzawa variants, and investigate also the computational costs. The computational domain is $\Omega = (0,1)^3$, where the coarse grid $\mathcal{T}_0$ consists of 384 elements (a mesh of 6 elements, twice uniformly refined).
From the generalized eigenvalue problem \cref{eq:gen_eigv} we obtain numerically the value $\omega = 0.55849$.

The computations are performed on an Intel Xeon CPU E3-1226~v3, 3.30GHz with 32 GB  memory. For the experiments only a single core is used, so that the timings do not realistically represent the performance of the four-core CPU. To this end, we employ the hierarchical hybrid grids (HHG) framework \cite{bergen-gradl-ruede-huelsemann_2006, bergen-huelsemann_2004, gmeiner-huber-john-ruede-wohlmuth_2015, GRSWW2015} with the intra-node shared memory parallel execution being disabled.

\begin{remark}
The damping parameter $\omega$ and the performance results differ in the case of 
degenerated elements, in particular when the tetrahedra have obtuse angles.
\end{remark}

For all numerical results, the solution of the discretized problem is then obtained by applying multigrid iterations.
We first focus on $W$-cycles since they are covered by the theory. In the following, we denote by $\nu$ the total number of smoothing steps, where the pre- and post-smoothing steps are determined by $\nu- \lfloor\nu/2 \rfloor$ and $\lfloor \nu/2 \rfloor$, respectively. The initial guess $(\boldsymbol{u}_0, \boldsymbol{p}_0)^\top$ is a random vector with values in $[0,1]$. 

\subsubsection{Smoothing rates and asymptotic convergence rates}

In a first test, we study the smoothing property of the individual methods numerically. Therefore, we evaluate the term $\| \mathcal{A} M^\nu \|_{\mathcal{L} \times \mathcal{L}}$ for different smoothing steps $\nu \in \mathbb{N}$. Recall from \cref{AR1}, the smoothing rate $\eta(\nu) \sim \nu^{-1/2}$. The computations are performed on refinement level $\ell=4$ with approximately $10^6$ degrees of freedom. 
 
\definecolor{darkgreen}{rgb}{0.125,0.5,0.169}
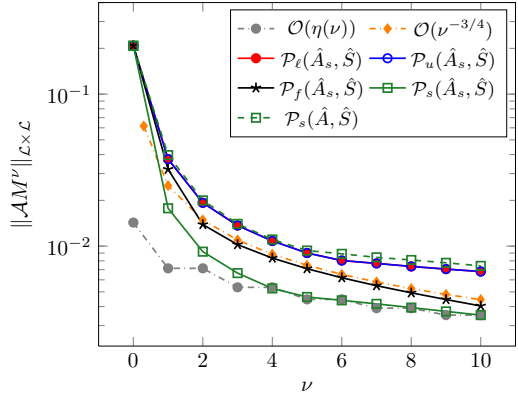
\begin{figure}[h!]
\centering
\begin{tikzpicture}[scale=0.8]
\def\scale{1.0/70.0}
\def\comma{,}
\begin{semilogyaxis}[
	legend columns=2,
	xlabel={$\nu$},
	ylabel={$\| \mathcal{A} M^\nu \|_{\mathcal{L} \times \mathcal{L}} $},
	ymin=0.0022,
	ymax=0.4
]
\addplot[gray, dashdotted, mark=*, thick, mark options={solid}] coordinates {
(0,\scale*1.)
(1, \scale*0.5)
(2, \scale*0.5)
(3, \scale*0.375)
(4, \scale*0.375)
(5, \scale*0.3125)
(6, \scale*0.3125)
(7, \scale*0.273438)
(8, \scale*0.273438)
(9, \scale*0.246094)
(10, \scale*0.246094)
};

\addplot[ orange, dashdotted, thick, domain=0:10, mark=diamond*, mark options={solid}] coordinates {
(0.3, 0.0616736)
(  1, 0.025)
(  2, 0.0148651)
(  3, 0.0109673)
(  4, 0.00883883)
(  5, 0.00747674)
(  6, 0.00652119)
(  7, 0.0058092)
(  8, 0.0052556)
(  9, 0.00481125)
(10, 0.0044457)
};

\addplot[red, mark=*, thick] coordinates {
(  0, 0.207996384120918)
(  1, 0.0373526962338543)
(  2, 0.0192745731208303)
(  3, 0.0136641073459574)
(  4, 0.0108157739843747)
(  5, 0.00900926951443624)
(  6, 0.00803610699399563)
(  7, 0.00769469817014541)
(  8, 0.0073502719714151)
(  9, 0.00706050060843181)
(10, 0.00680252365294792)
};

\addplot[blue, mark=o, thick] coordinates {
(  0, 0.207996384120918)
(  1, 0.0373520656688646)
(  2, 0.0192745676813583)
(  3, 0.0136640616589857)
(  4, 0.0108157741255856)
(  5, 0.00900926952810574)
(  6, 0.00803908022433861)
(  7, 0.00769585832630931)
(  8, 0.00735003743127909)
(  9, 0.00706049860956724)
(10, 0.00680252357903783)
};

\addplot[black, mark=star, thick, mark options = {scale=1.25}] coordinates {
(  0, 0.207996384120918)
(  1, 0.0319977254642748)
(  2, 0.0138583730332115)
(  3, 0.0102275881029119)
(  4, 0.00834330775270372)
(  5, 0.00710128361457704)
(  6, 0.00619200843079961)
(  7, 0.00549054406158394)
(  8, 0.00492885219552134)
(  9, 0.00445268882014655)
(10, 0.00403888567520626)
};

\addplot[darkgreen, mark=square, thick] coordinates {
(  0, 0.207996384120918)
(  1, 0.0177412088584518)
(  2, 0.00919509486346293)
(  3, 0.00664370743272169)
(  4, 0.00526696236126429)
(  5, 0.0046276925272888)
(  6, 0.00439708564461531)
(  7, 0.00416985959026414)
(  8, 0.00394115774957919)
(  9, 0.00372011883804373)
(10, 0.00351554551658048)
};

\addplot[darkgreen, mark=square, thick, dashed, mark options={solid}] coordinates {
(  0, 0.207996384120918)
(  1, 0.0397087540879259)
(  2, 0.019992080959832)
(  3, 0.0140072227467648)
(  4, 0.011056729520218)
(  5, 0.00935144113126181)
(  6, 0.00889666661029445)
(  7, 0.00843332168921213)
(  8, 0.00810380277189382)
(  9, 0.00776812685897727)
(10, 0.00741396308741607)
};

\legend{\small$\mathcal{O}(\eta(\nu))$, \small$\mathcal{O}(\nu^{-3/4})$, \small$\mathcal{P}_\ell(\hat{A}_s\comma \hat{S})$, \small$\mathcal{P}_u(\hat{A}_s\comma \hat{S})$, \small$\mathcal{P}_f(\hat{A}_s\comma \hat{S})$, \small$\mathcal{P}_s(\hat{A}_s\comma \hat{S})$, \small$\mathcal{P}_s(\hat{A}\comma \hat{S})$}
\end{semilogyaxis}
\end{tikzpicture}
\caption{Smoothing rates for different Uzawa variants.} 
\label{Fig:smoothingrates}
\end{figure}

In \cref{Fig:smoothingrates}, we depict the results of the numerical smoothing rates. For the cases $\mathcal{P}_\ell(\hat{A}_s, \hat{S})$, $\mathcal{P}_u(\hat{A}_s, \hat{S})$, $\mathcal{P}_s(\hat{A}_s, \hat{S})$ and $\mathcal{P}_s(\hat{A}, \hat{S})$ we observe that the smoothing rates are of order $\eta(\nu)$, as shown theoretically in \cref{sec:smoothing_property}. Notice, in the case of the smoother $\mathcal{P}_f(\hat{A}_s, \hat{S})$ a even slightly higher smoothing rate of approximately $\mathcal{O}(\nu^{-3/4})$ is observed.

Next, let us demonstrate the robustness with respect to the problem size of the considered methods. Therefore, we consider the trivial solution, i.e. homogeneous right-hand side and Dirichlet datum, with a random initial vector as described above. 

\begin{table}[h!]
\centering
\setlength{\tabcolsep}{1.5mm}
\begin{tabular}{ l l | c c c c c | c c c c c}
\hline
& & & & & & & & & & & \\[-2.5ex]
\multicolumn{2}{c|}{} & \multicolumn{5}{c|}{$\mathcal{P}_\ell(\hat{A}_s, \hat{S})$} & \multicolumn{5}{c}{$\mathcal{P}_s(\hat{A}, \hat{S})$}\\
\hline
\multicolumn{2}{c|}{} & \multicolumn{5}{c|}{$\nu$} & \multicolumn{5}{c}{$\nu$}\\
$\ell$ & \multicolumn{1}{c|}{DoFs}	& 1 & 2 & 4 & 6 & 8 & 1 & 2 & 4 & 6 & 8  \\
\hline 
& & & & & & & & & & & \\[-2.5ex]
1 &  $1.4 \cdot 10^{3}$	& 0.857		& 0.816		& 0.554		& 0.418			& 0.319	& 0.857			& 0.739			& 0.549			& 0.418			& 0.319\\	
2 &  $1.4 \cdot 10^{4}$	& 0.857		& 0.741		& 0.556		& 0.420			& 0.320	& 0.857			& 0.741			& 0.556			& 0.420			& 0.319\\	
3 &  $1.2 \cdot 10^{5}$	& 0.857		& 0.740		& 0.556		& 0.420			& 0.319	& 0.857			& 0.740			& 0.556			& 0.420			& 0.320\\	
4 &  $1.0 \cdot 10^{6}$	& 0.857		& 0.737		& 0.556		& 0.420			& 0.320	& 0.857			& 0.738			& 0.556			& 0.420			& 0.320\\	
5 &  $8.2 \cdot 10^{6}$	& 0.857		& 0.736		& 0.556		& 0.420			& 0.319	& 0.857			& 0.737			& 0.555			& 0.420			& 0.320\\	
6 &  $6.6 \cdot 10^{7}$	& 0.857		& 0.736		& 0.556		& 0.420			& 0.320	& 0.857			& 0.736			& 0.555			& 0.420			& 0.320\\	
\hline
\end{tabular}
\caption{Asymptotic convergence rates for several refinement levels in case of $\mathcal{P}_\ell(\hat{A}_s, \hat{S})$ and $\mathcal{P}_s(\hat{A}, \hat{S})$.}
\label{T:Wcycle_Pl_Ps}
\end{table}

In \cref{T:Wcycle_Pl_Ps}, we present asymptotic convergence rates for the $W$-cycles  with respect to the norm $\| \cdot \|_\mathcal{L}$ for $\mathcal{P}_\ell(\hat{A}_s, \hat{S})$ and $\mathcal{P}_s(\hat{A}, \hat{S})$ which include several smoothing steps $\nu$ and refinement levels $\ell$. We observe robustness with respect to the problem size for both cases. Also, the asymptotic convergence rates for both cases almost agree. Comparable results are obtained for the remaining variants $\mathcal{P}_i(\hat{A}_s, \hat{S})$, $i \in \{u, f, s\}$, even though $P_f(\hat{A}_s, \hat{S})$ and $P_s(\hat{A}_s, \hat{S})$ have a better pre-asymptotic behavior in the smoothing rate, cf.\ \cref{Fig:smoothingrates}. In particular, $\mathcal{P}_s(\hat{A}_s, \hat{S})$ does not improve the asymptotic convergence rate, and is thus only more costly.

\subsubsection{Relative costs and time-to-solution}

So far, we did not include the cost of the individual smoothers in the results. In the following, we evaluate the costs of the smoothers and its corresponding multigrid method. In particular, we aim to decide by means of the cost of the smoother whether or not the corresponding multigrid method is expensive. Motivated from the fact that the required number of iterations for solving up to a given tolerance $\epsilon$ is roughly given by $\log(\epsilon) / \log(\rho)$, where $\rho$ denotes the convergence rate, we introduce the relative costs for the smoothers and multigrid methods by
\begin{align}\label{rel_cost}
c_i^\text{SM}(\nu) = \frac{c_i(\nu)}{\log(\eta_i(\nu))} \frac{\log(\eta_\text{ref})}{c_\text{ref}}, \qquad c_i^\text{MG}(\nu) = \frac{\bar{c}_i(\nu)}{\log(\rho_i(\nu))} \frac{\log(\rho_\text{ref})}{\bar{c}_\text{ref}},
\end{align}
where $i \in \{\ell, u, f, s\}$ indicates the method and for some to be specified reference configuration. Here, $\eta_i(\nu)$ denotes the smoothing rate and $\rho_i(\nu)$ the multigrid convergence rate. Furthermore, $c_i(\nu)$ represents the costs of the particular smoother and $\bar{c}_i(\nu)$ additionally include the cost of the residual calculation, within the multigrid method. The costs for the individual smoothers are measured in terms of a matrix-vector multiplication with the system matrix $\mathcal{A}$ of the Stokes system and is defined by 10 scalar matrix-vector multiplications, i.e. three for $A$, $B^\top$, $B$ and one for $C$. 

In the following, we exemplarily introduce the costs for the cases $\mathcal{P}_\ell(\hat{A}_s, \hat{S})$ and $P_s(\hat{A}, \hat{S})$, which include the symmetric- and backward Gauss--Seidel, respectively. The costs for the other relevant cases can be derived similarly. Within the Gauss--Seidel scheme the application of $\hat{A}^{-1}$ and $A$ count for three matrix-vector multiplications, since we have a vector-valued velocity. The application of a damped Jacobi $\hat{S}^{-1}$ involves only the diagonal and thus has a negligible small costs. According to \cref{sec:classes} the costs are then given as follows. For $\mathcal{P}_\ell(\hat{A}_s, \hat{S})$ we obtain
\begin{align*}
c_\ell(\nu) = \nu \frac{13}{10}, \qquad \bar{c}_\ell(\nu) = \hat{n} \left( \nu \frac{13}{10} + 1\right),
\end{align*}
and for $P_s(\hat{A}, \hat{S})$ we have
\begin{align*}
c_s(\nu) = \frac{\chi(\nu)}{10},\ \ \bar{c}_s(\nu) = \hat{n} \left( \frac{\chi(\nu)}{10} + 1\right), \ \ \text{with }
\chi(\nu) = 
\begin{cases}
16 \nu \quad& \text{for } \nu \leq 2,\\
 16 \cdot 2 + 13 (\nu - 2) & \text{else}.
\end{cases}
\end{align*}
Here, $\hat{n}$ denotes the limit case of the number of all visits, weighted by $1/8$ according to the level. Since we are interested only in the relative cost, the value of $\hat{n}$ is not important.

\begin{figure}[h!]
\centering
\begin{tikzpicture}[scale=0.8]
\def\scale{1.0/70.0}
\def\comma{,}
\begin{axis}[
	xlabel={$\nu$},
	ylabel={$c_i^\text{SM}(\nu)$},
	legend pos = north west 
]

\addplot[red, mark=*, thick] coordinates {
(  1, 0.4353032634)
(  2, 0.7247433144)
(  3, 1)
(  4, 1.2644783746)
(  5, 1.5192630548)
(  6, 1.7799135227)
(  7, 2.0580438409)
(  8, 2.3301266514)
(  9, 2.6001062173)
(10, 2.8674587117)
};

\addplot[blue, mark=o, thick] coordinates {
(  1, 0.435301028)
(  2, 0.7247432626)
(  3, 0.9999992212)
(  4, 1.2644783783)
(  5, 1.5192630553)
(  6, 1.7800500265)
(  7, 2.0581075908)
(  8, 2.3301115176)
(  9, 2.6001060687)
(10, 2.8674587054)
};

\addplot[black, mark=star, thick, mark options = {scale=1.25}] coordinates {
(  1, 0.703549958)
(  2, 1.1319273749)
(  3, 1.5132714448)
(  4, 1.8858575375)
(  5, 2.24714911)
(  6, 2.5979326901)
(  7, 2.9397561538)
(  8, 3.2737121639)
(  9, 3.5985681424)
(10, 3.9143371418)
};

\addplot[darkgreen, mark=square, thick] coordinates {
(  1, 0.60063581)
(  2, 1.0329019599)
(  3, 1.383067534)
(  4, 1.7205018669)
(  5, 2.0681457378)
(  6, 2.4340578984)
(  7, 2.7921506925)
(  8, 3.1414743725)
(  9, 3.4829350061)
(10, 3.8182035477)
};

\addplot[darkgreen, mark=square, thick, dashed, mark options={solid}] coordinates {
(  1, 0.5459155213)
(  2, 0.9003246794)
(  3, 1.1605506692)
(  4, 1.4172782247)
(  5, 1.6727427527)
(  6, 1.9581259069)
(  7, 2.2358442873)
(  8, 2.5145072154)
(  9, 2.7871902702)
(10, 3.052449141)
};

\legend{\small$\mathcal{P}_\ell(\hat{A}_s\comma \hat{S})$, \small$\mathcal{P}_u(\hat{A}_s\comma \hat{S})$, \small$\mathcal{P}_f(\hat{A}_s\comma \hat{S})$, \small$\mathcal{P}_s(\hat{A}_s\comma \hat{S})$, \small$\mathcal{P}_s(\hat{A}\comma \hat{S})$}
\end{axis}
\end{tikzpicture}
\caption{Relative costs $c_i^\text{SM}(\nu)$ for different Uzawa variants.} 
\label{Fig:smoothingrates_costs}
\end{figure}
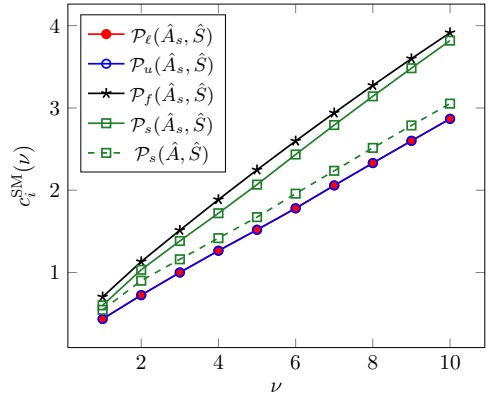

In \cref{Fig:smoothingrates_costs}, we present relative costs $c_i^\text{SM}(\nu)$ for different smoothing steps, where $\mathcal{P}_\ell(\hat{A}_s, \hat{S})$ with $\nu=3$ serves as the reference. We observe that the variants $\mathcal{P}_\ell(\hat{A}_s, \hat{S})$, $\mathcal{P}_u(\hat{A}_s, \hat{S})$ and $\mathcal{P}_s(\hat{A}, \hat{S})$ seem to incur the least cost and deliver quite similar results. 

\begin{table}[h!]
\centering
\begin{tabular}{ l | r r r r }
\hline
$\mathcal{P}\ \ \backslash\ \ \nu$	& \multicolumn{1}{c}{1} & \multicolumn{1}{c}{2} & \multicolumn{1}{c}{4} & \multicolumn{1}{c}{6} \\
\hline 
& & & & \\[-2.5ex]
$\mathcal{P}_\ell(\hat{A}_s, \hat{S})$		& 1.467 (1.504) & 1.158 (1.173) & 1.042 (1.048)	& 1.000 (1.000)\\
$\mathcal{P}_u(\hat{A}_s, \hat{S})$		& 1.463 (1.502) & 1.162 (1.176) & 1.039 (1.043)	& 1.000 (0.999)\\
$\mathcal{P}_f(\hat{A}_s, \hat{S})$		& 2.041 (2.130) & 1.737 (1.801) & 1.542 (1.602)	& 1.481 (1.535)\\
$\mathcal{P}_s(\hat{A}_s, \hat{S})$		& 2.041 (2.049) & 1.737 (1.725) & 1.532 (1.508)	& 1.475 (1.447)\\
$\mathcal{P}_s(\hat{A}, \hat{S})$		& 1.657 (1.696) & 1.352 (1.368) & 1.141 (1.145)	& 1.070 (1.070)\\
\hline
\end{tabular}
\caption{Relative costs $c_i^\text{MG}(\nu)$ for different Uzawa smoothers.} 
\label{T:relativecosts_model}
\end{table}
Next, we investigate the relative costs of the multigrid methods. For the following computation of the relative costs \eqref{rel_cost} we choose again as a reference configuration $\mathcal{P}_\ell(\hat{A}_s, \hat{S})$ with $W(3,3)$. 
Corresponding results are depicted in \cref{T:relativecosts_model}. For comparison, we also add the relative costs, where we consider instead of $\bar{c}_i$ the average time per multigrid iteration, in brackets. Again, we observe that $\mathcal{P}_\ell(\hat{A}_s, \hat{S})$, $\mathcal{P}_u(\hat{A}_s, \hat{S})$ and $\mathcal{P}_s(\hat{A}, \hat{S})$ are least costly. The numbers for the computational cost obtained by the simple model are in very good agreement with those who include the average time per multigrid iteration.
\begin{figure}[h!]
\centering
\begin{tikzpicture}[scale=0.58]
\def\scale{1.0/70.0}
\def\comma{,}
\begin{semilogyaxis}[
	xlabel={time ($\nu=1$)},
	ylabel={rel. residual},
	legend pos = north east,
	ymin = 5e-14,
	ymax = 10
]

\addplot[red, mark=*, mark repeat = 5, thick] coordinates {
(	0.00	,	1.00000E+00	)
(	3.81	,	9.60394E-02	)
(	7.62	,	1.65333E-02	)
(	11.43	,	6.95700E-03	)
(	15.24	,	2.76850E-03	)
(	19.05	,	1.34790E-03	)
(	22.86	,	7.02139E-04	)
(	26.67	,	3.35640E-04	)
(	30.48	,	1.95144E-04	)
(	34.29	,	1.17128E-04	)
(	38.11	,	7.37911E-05	)
(	41.92	,	5.05564E-05	)
(	45.73	,	3.57765E-05	)
(	49.54	,	2.62779E-05	)
(	53.35	,	1.98765E-05	)
(	57.16	,	1.52813E-05	)
(	60.97	,	1.19357E-05	)
(	64.78	,	9.43703E-06	)
(	68.59	,	7.53589E-06	)
(	72.40	,	6.06845E-06	)
(	76.21	,	4.91998E-06	)
(	80.02	,	4.01191E-06	)
(	83.83	,	3.28758E-06	)
(	87.64	,	2.70540E-06	)
(	91.45	,	2.23441E-06	)
(	95.26	,	1.85121E-06	)
(	99.07	,	1.53791E-06	)
(	102.88	,	1.28069E-06	)
(	106.69	,	1.06872E-06	)
(	110.50	,	8.93499E-07	)
(	114.32	,	7.48236E-07	)
(	118.13	,	6.27510E-07	)
(	121.94	,	5.26956E-07	)
(	125.75	,	4.43040E-07	)
(	129.56	,	3.72885E-07	)
(	133.37	,	3.14143E-07	)
(	137.18	,	2.64889E-07	)
(	140.99	,	2.23537E-07	)
(	144.80	,	1.88779E-07	)
(	148.61	,	1.59534E-07	)
(	152.42	,	1.34903E-07	)
(	156.23	,	1.14141E-07	)
(	160.04	,	9.66259E-08	)
(	163.85	,	8.18388E-08	)
(	167.66	,	6.93466E-08	)
(	171.47	,	5.87865E-08	)
(	175.28	,	4.98544E-08	)
(	179.09	,	4.22954E-08	)
(	182.90	,	3.58951E-08	)
(	186.71	,	3.04734E-08	)
(	190.53	,	2.58787E-08	)
(	194.34	,	2.19833E-08	)
(	198.15	,	1.86794E-08	)
(	201.96	,	1.58762E-08	)
(	205.77	,	1.34971E-08	)
(	209.58	,	1.14772E-08	)
(	213.39	,	9.76173E-09	)
(	217.20	,	8.30447E-09	)
(	221.01	,	7.06619E-09	)
(	224.82	,	6.01372E-09	)
(	228.63	,	5.11896E-09	)
(	232.44	,	4.35810E-09	)
(	236.25	,	3.71096E-09	)
(	240.06	,	3.16042E-09	)
(	243.87	,	2.69198E-09	)
(	247.68	,	2.29331E-09	)
(	251.49	,	1.95397E-09	)
(	255.30	,	1.66506E-09	)
(	259.11	,	1.41906E-09	)
(	262.92	,	1.20956E-09	)
(	266.74	,	1.03112E-09	)
(	270.55	,	8.79103E-10	)
(	274.36	,	7.49585E-10	)
(	278.17	,	6.39220E-10	)
(	281.98	,	5.45162E-10	)
(	285.79	,	4.64993E-10	)
(	289.60	,	3.96652E-10	)
(	293.41	,	3.38388E-10	)
(	297.22	,	2.88710E-10	)
(	301.03	,	2.46347E-10	)
(	304.84	,	2.10219E-10	)
(	308.65	,	1.79404E-10	)
(	312.46	,	1.53119E-10	)
(	316.27	,	1.30696E-10	)
(	320.08	,	1.11565E-10	)
(	323.89	,	9.52415E-11	)
(	327.70	,	8.13125E-11	)
(	331.51	,	6.94256E-11	)
(	335.32	,	5.92806E-11	)
(	339.13	,	5.06215E-11	)
(	342.95	,	4.32301E-11	)
(	346.76	,	3.69203E-11	)
(	350.57	,	3.15335E-11	)
(	354.38	,	2.69343E-11	)
(	358.19	,	2.30073E-11	)
(	362.00	,	1.96540E-11	)
(	365.81	,	1.67904E-11	)
(	369.62	,	1.43448E-11	)
(	373.43	,	1.22561E-11	)
(	377.24	,	1.04721E-11	)
(	381.05	,	8.94828E-12	)
(	384.86	,	7.64655E-12	)
(	388.67	,	6.53452E-12	)
(	392.48	,	5.58448E-12	)
(	396.29	,	4.77280E-12	)
(	400.10	,	4.07928E-12	)
(	403.91	,	3.48670E-12	)
(	407.72	,	2.98033E-12	)
(	411.53	,	2.54761E-12	)
(	415.34	,	2.17782E-12	)
(	419.16	,	1.86178E-12	)
(	422.97	,	1.59166E-12	)
(	426.78	,	1.36080E-12	)
(	430.59	,	1.16346E-12	)
(	434.40	,	9.94785E-13	)
};

\addplot[blue, mark=o, mark repeat = 5, thick] coordinates {
(	0.00	,	1.00000E+00	)
(	3.81	,	9.35015E-02	)
(	7.61	,	1.95255E-02	)
(	11.42	,	7.93694E-03	)
(	15.23	,	3.81157E-03	)
(	19.04	,	2.08466E-03	)
(	22.84	,	1.20927E-03	)
(	26.65	,	7.52612E-04	)
(	30.46	,	5.02942E-04	)
(	34.27	,	3.48656E-04	)
(	38.07	,	2.49023E-04	)
(	41.88	,	1.82117E-04	)
(	45.69	,	1.35515E-04	)
(	49.49	,	1.02317E-04	)
(	53.30	,	7.81586E-05	)
(	57.11	,	6.02812E-05	)
(	60.92	,	4.68832E-05	)
(	64.72	,	3.67248E-05	)
(	68.53	,	2.89466E-05	)
(	72.34	,	2.29397E-05	)
(	76.15	,	1.82663E-05	)
(	79.95	,	1.46068E-05	)
(	83.76	,	1.17250E-05	)
(	87.57	,	9.44409E-06	)
(	91.37	,	7.63058E-06	)
(	95.18	,	6.18282E-06	)
(	98.99	,	5.02281E-06	)
(	102.80	,	4.09025E-06	)
(	106.60	,	3.33826E-06	)
(	110.41	,	2.73017E-06	)
(	114.22	,	2.23717E-06	)
(	118.03	,	1.83653E-06	)
(	121.83	,	1.51021E-06	)
(	125.64	,	1.24386E-06	)
(	129.45	,	1.02606E-06	)
(	133.25	,	8.47607E-07	)
(	137.06	,	7.01152E-07	)
(	140.87	,	5.80758E-07	)
(	144.68	,	4.81633E-07	)
(	148.48	,	3.99900E-07	)
(	152.29	,	3.32412E-07	)
(	156.10	,	2.76612E-07	)
(	159.91	,	2.30416E-07	)
(	163.71	,	1.92125E-07	)
(	167.52	,	1.60350E-07	)
(	171.33	,	1.33951E-07	)
(	175.13	,	1.11996E-07	)
(	178.94	,	9.37178E-08	)
(	182.75	,	7.84856E-08	)
(	186.56	,	6.57799E-08	)
(	190.36	,	5.51718E-08	)
(	194.17	,	4.63074E-08	)
(	197.98	,	3.88937E-08	)
(	201.79	,	3.26885E-08	)
(	205.59	,	2.74905E-08	)
(	209.40	,	2.31332E-08	)
(	213.21	,	1.94778E-08	)
(	217.01	,	1.64092E-08	)
(	220.82	,	1.38315E-08	)
(	224.63	,	1.16648E-08	)
(	228.44	,	9.84239E-09	)
(	232.24	,	8.30868E-09	)
(	236.05	,	7.01719E-09	)
(	239.86	,	5.92907E-09	)
(	243.67	,	5.01181E-09	)
(	247.47	,	4.23820E-09	)
(	251.28	,	3.58541E-09	)
(	255.09	,	3.03432E-09	)
(	258.90	,	2.56887E-09	)
(	262.70	,	2.17558E-09	)
(	266.51	,	1.84312E-09	)
(	270.32	,	1.56198E-09	)
(	274.12	,	1.32414E-09	)
(	277.93	,	1.12285E-09	)
(	281.74	,	9.52437E-10	)
(	285.55	,	8.08114E-10	)
(	289.35	,	6.85845E-10	)
(	293.16	,	5.82227E-10	)
(	296.97	,	4.94387E-10	)
(	300.78	,	4.19901E-10	)
(	304.58	,	3.56721E-10	)
(	308.39	,	3.03115E-10	)
(	312.20	,	2.57620E-10	)
(	316.00	,	2.19000E-10	)
(	319.81	,	1.86207E-10	)
(	323.62	,	1.58355E-10	)
(	327.43	,	1.34695E-10	)
(	331.23	,	1.14590E-10	)
(	335.04	,	9.75038E-11	)
(	338.85	,	8.29792E-11	)
(	342.66	,	7.06299E-11	)
(	346.46	,	6.01280E-11	)
(	350.27	,	5.11955E-11	)
(	354.08	,	4.35965E-11	)
(	357.88	,	3.71308E-11	)
(	361.69	,	3.16285E-11	)
(	365.50	,	2.69451E-11	)
(	369.31	,	2.29583E-11	)
(	373.11	,	1.95638E-11	)
(	376.92	,	1.66733E-11	)
(	380.73	,	1.42116E-11	)
(	384.54	,	1.21147E-11	)
(	388.34	,	1.03284E-11	)
(	392.15	,	8.80642E-12	)
(	395.96	,	7.50953E-12	)
(	399.76	,	6.40429E-12	)
(	403.57	,	5.46226E-12	)
(	407.38	,	4.65925E-12	)
(	411.19	,	3.97468E-12	)
(	414.99	,	3.39099E-12	)
(	418.80	,	2.89329E-12	)
(	422.61	,	2.46884E-12	)
(	426.42	,	2.10685E-12	)
(	430.22	,	1.79808E-12	)
(	434.03	,	1.53468E-12	)
(	437.84	,	1.30997E-12	)
(	441.64	,	1.11825E-12	)
(	445.45	,	9.54665E-13	)
};

\addplot[black, mark=star, mark repeat = 5, thick, mark options = {scale=1.25}] coordinates {
(	0.00	,	1.00000E+00	)
(	5.26	,	2.11163E-02	)
(	10.53	,	4.17874E-03	)
(	15.79	,	1.20315E-03	)
(	21.05	,	4.39688E-04	)
(	26.32	,	2.12610E-04	)
(	31.58	,	1.22314E-04	)
(	36.84	,	8.17054E-05	)
(	42.11	,	5.81348E-05	)
(	47.37	,	4.31825E-05	)
(	52.63	,	3.30484E-05	)
(	57.90	,	2.58440E-05	)
(	63.16	,	2.05352E-05	)
(	68.42	,	1.65150E-05	)
(	73.69	,	1.34059E-05	)
(	78.95	,	1.09615E-05	)
(	84.21	,	9.01447E-06	)
(	89.47	,	7.44773E-06	)
(	94.74	,	6.17662E-06	)
(	100.00	,	5.13852E-06	)
(	105.26	,	4.28612E-06	)
(	110.53	,	3.58307E-06	)
(	115.79	,	3.00105E-06	)
(	121.05	,	2.51771E-06	)
(	126.32	,	2.11524E-06	)
(	131.58	,	1.77936E-06	)
(	136.84	,	1.49848E-06	)
(	142.11	,	1.26319E-06	)
(	147.37	,	1.06579E-06	)
(	152.63	,	8.99968E-07	)
(	157.90	,	7.60495E-07	)
(	163.16	,	6.43062E-07	)
(	168.42	,	5.44091E-07	)
(	173.69	,	4.60608E-07	)
(	178.95	,	3.90132E-07	)
(	184.21	,	3.30596E-07	)
(	189.48	,	2.80267E-07	)
(	194.74	,	2.37697E-07	)
(	200.00	,	2.01669E-07	)
(	205.27	,	1.71162E-07	)
(	210.53	,	1.45318E-07	)
(	215.79	,	1.23414E-07	)
(	221.06	,	1.04843E-07	)
(	226.32	,	8.90906E-08	)
(	231.58	,	7.57247E-08	)
(	236.84	,	6.43797E-08	)
(	242.11	,	5.47472E-08	)
(	247.37	,	4.65662E-08	)
(	252.63	,	3.96159E-08	)
(	257.90	,	3.37098E-08	)
(	263.16	,	2.86896E-08	)
(	268.42	,	2.44214E-08	)
(	273.69	,	2.07919E-08	)
(	278.95	,	1.77047E-08	)
(	284.21	,	1.50782E-08	)
(	289.48	,	1.28434E-08	)
(	294.74	,	1.09414E-08	)
(	300.00	,	9.32233E-09	)
(	305.27	,	7.94394E-09	)
(	310.53	,	6.77023E-09	)
(	315.79	,	5.77065E-09	)
(	321.06	,	4.91925E-09	)
(	326.32	,	4.19395E-09	)
(	331.58	,	3.57599E-09	)
(	336.85	,	3.04941E-09	)
(	342.11	,	2.60064E-09	)
(	347.37	,	2.21814E-09	)
(	352.64	,	1.89208E-09	)
(	357.90	,	1.61411E-09	)
(	363.16	,	1.37710E-09	)
(	368.43	,	1.17500E-09	)
(	373.69	,	1.00264E-09	)
(	378.95	,	8.55638E-10	)
(	384.22	,	7.30248E-10	)
(	389.48	,	6.23282E-10	)
(	394.74	,	5.32026E-10	)
(	400.00	,	4.54165E-10	)
(	405.27	,	3.87727E-10	)
(	410.53	,	3.31031E-10	)
(	415.79	,	2.82646E-10	)
(	421.06	,	2.41348E-10	)
(	426.32	,	2.06099E-10	)
(	431.58	,	1.76009E-10	)
(	436.85	,	1.50321E-10	)
(	442.11	,	1.28390E-10	)
(	447.37	,	1.09666E-10	)
(	452.64	,	9.36772E-11	)
(	457.90	,	8.00243E-11	)
(	463.16	,	6.83651E-11	)
(	468.43	,	5.84077E-11	)
(	473.69	,	4.99033E-11	)
(	478.95	,	4.26393E-11	)
(	484.22	,	3.64346E-11	)
(	489.48	,	3.11343E-11	)
(	494.74	,	2.66064E-11	)
(	500.01	,	2.27380E-11	)
(	505.27	,	1.94330E-11	)
(	510.53	,	1.66091E-11	)
(	515.80	,	1.41962E-11	)
(	521.06	,	1.21344E-11	)
(	526.32	,	1.03725E-11	)
(	531.59	,	8.86679E-12	)
(	536.85	,	7.57997E-12	)
(	542.11	,	6.48017E-12	)
(	547.37	,	5.54016E-12	)
(	552.64	,	4.73669E-12	)
(	557.90	,	4.04991E-12	)
(	563.16	,	3.46283E-12	)
(	568.43	,	2.96097E-12	)
(	573.69	,	2.53193E-12	)
(	578.95	,	2.16514E-12	)
(	584.22	,	1.85155E-12	)
(	589.48	,	1.58343E-12	)
(	594.74	,	1.35418E-12	)
(	600.01	,	1.15817E-12	)
(	605.27	,	9.90555E-13	)
};

\addplot[darkgreen, mark=square, mark repeat = 5, thick] coordinates {
(	0.00	,	1.00000E+00	)
(	5.21	,	2.11163E-02	)
(	10.41	,	4.17874E-03	)
(	15.62	,	1.20315E-03	)
(	20.82	,	4.39688E-04	)
(	26.03	,	2.12610E-04	)
(	31.23	,	1.22314E-04	)
(	36.44	,	8.17054E-05	)
(	41.64	,	5.81348E-05	)
(	46.85	,	4.31825E-05	)
(	52.06	,	3.30484E-05	)
(	57.26	,	2.58440E-05	)
(	62.47	,	2.05352E-05	)
(	67.67	,	1.65150E-05	)
(	72.88	,	1.34059E-05	)
(	78.08	,	1.09615E-05	)
(	83.29	,	9.01447E-06	)
(	88.49	,	7.44773E-06	)
(	93.70	,	6.17662E-06	)
(	98.90	,	5.13852E-06	)
(	104.11	,	4.28612E-06	)
(	109.32	,	3.58307E-06	)
(	114.52	,	3.00105E-06	)
(	119.73	,	2.51771E-06	)
(	124.93	,	2.11524E-06	)
(	130.14	,	1.77936E-06	)
(	135.34	,	1.49848E-06	)
(	140.55	,	1.26319E-06	)
(	145.75	,	1.06579E-06	)
(	150.96	,	8.99968E-07	)
(	156.17	,	7.60495E-07	)
(	161.37	,	6.43062E-07	)
(	166.58	,	5.44091E-07	)
(	171.78	,	4.60608E-07	)
(	176.99	,	3.90132E-07	)
(	182.19	,	3.30596E-07	)
(	187.40	,	2.80267E-07	)
(	192.60	,	2.37697E-07	)
(	197.81	,	2.01669E-07	)
(	203.01	,	1.71162E-07	)
(	208.22	,	1.45318E-07	)
(	213.43	,	1.23414E-07	)
(	218.63	,	1.04843E-07	)
(	223.84	,	8.90906E-08	)
(	229.04	,	7.57247E-08	)
(	234.25	,	6.43797E-08	)
(	239.45	,	5.47472E-08	)
(	244.66	,	4.65662E-08	)
(	249.86	,	3.96159E-08	)
(	255.07	,	3.37098E-08	)
(	260.28	,	2.86896E-08	)
(	265.48	,	2.44214E-08	)
(	270.69	,	2.07919E-08	)
(	275.89	,	1.77047E-08	)
(	281.10	,	1.50782E-08	)
(	286.30	,	1.28434E-08	)
(	291.51	,	1.09414E-08	)
(	296.71	,	9.32233E-09	)
(	301.92	,	7.94394E-09	)
(	307.13	,	6.77023E-09	)
(	312.33	,	5.77065E-09	)
(	317.54	,	4.91925E-09	)
(	322.74	,	4.19395E-09	)
(	327.95	,	3.57599E-09	)
(	333.15	,	3.04941E-09	)
(	338.36	,	2.60064E-09	)
(	343.56	,	2.21814E-09	)
(	348.77	,	1.89208E-09	)
(	353.97	,	1.61411E-09	)
(	359.18	,	1.37710E-09	)
(	364.39	,	1.17500E-09	)
(	369.59	,	1.00264E-09	)
(	374.80	,	8.55638E-10	)
(	380.00	,	7.30248E-10	)
(	385.21	,	6.23282E-10	)
(	390.41	,	5.32026E-10	)
(	395.62	,	4.54165E-10	)
(	400.82	,	3.87727E-10	)
(	406.03	,	3.31031E-10	)
(	411.24	,	2.82646E-10	)
(	416.44	,	2.41348E-10	)
(	421.65	,	2.06099E-10	)
(	426.85	,	1.76009E-10	)
(	432.06	,	1.50321E-10	)
(	437.26	,	1.28390E-10	)
(	442.47	,	1.09666E-10	)
(	447.67	,	9.36772E-11	)
(	452.88	,	8.00243E-11	)
(	458.08	,	6.83651E-11	)
(	463.29	,	5.84077E-11	)
(	468.50	,	4.99033E-11	)
(	473.70	,	4.26393E-11	)
(	478.91	,	3.64346E-11	)
(	484.11	,	3.11343E-11	)
(	489.32	,	2.66064E-11	)
(	494.52	,	2.27380E-11	)
(	499.73	,	1.94330E-11	)
(	504.93	,	1.66091E-11	)
(	510.14	,	1.41962E-11	)
(	515.35	,	1.21344E-11	)
(	520.55	,	1.03725E-11	)
(	525.76	,	8.86679E-12	)
(	530.96	,	7.57997E-12	)
(	536.17	,	6.48017E-12	)
(	541.37	,	5.54016E-12	)
(	546.58	,	4.73669E-12	)
(	551.78	,	4.04991E-12	)
(	556.99	,	3.46283E-12	)
(	562.20	,	2.96097E-12	)
(	567.40	,	2.53193E-12	)
(	572.61	,	2.16514E-12	)
(	577.81	,	1.85155E-12	)
(	583.02	,	1.58343E-12	)
(	588.22	,	1.35418E-12	)
(	593.43	,	1.15817E-12	)
(	598.63	,	9.90555E-13	)
};

\addplot[darkgreen, mark=square, mark repeat = 5, thick, dashed, mark options={solid}] coordinates {
(	0.00	,	1.00000E+00	)
(	4.30	,	9.55771E-02	)
(	8.61	,	1.58467E-02	)
(	12.91	,	7.21370E-03	)
(	17.21	,	3.19954E-03	)
(	21.52	,	1.39366E-03	)
(	25.82	,	7.73795E-04	)
(	30.12	,	4.01353E-04	)
(	34.43	,	2.28440E-04	)
(	38.73	,	1.47574E-04	)
(	43.03	,	9.77104E-05	)
(	47.34	,	6.84828E-05	)
(	51.64	,	5.00584E-05	)
(	55.94	,	3.73712E-05	)
(	60.25	,	2.85040E-05	)
(	64.55	,	2.20731E-05	)
(	68.86	,	1.72737E-05	)
(	73.16	,	1.36463E-05	)
(	77.46	,	1.08656E-05	)
(	81.77	,	8.70937E-06	)
(	86.07	,	7.02169E-06	)
(	90.37	,	5.68940E-06	)
(	94.68	,	4.63015E-06	)
(	98.98	,	3.78284E-06	)
(	103.28	,	3.10137E-06	)
(	107.59	,	2.55062E-06	)
(	111.89	,	2.10358E-06	)
(	116.19	,	1.73930E-06	)
(	120.50	,	1.44141E-06	)
(	124.80	,	1.19704E-06	)
(	129.10	,	9.96006E-07	)
(	133.41	,	8.30180E-07	)
(	137.71	,	6.93072E-07	)
(	142.01	,	5.79459E-07	)
(	146.32	,	4.85127E-07	)
(	150.62	,	4.06658E-07	)
(	154.92	,	3.41276E-07	)
(	159.23	,	2.86711E-07	)
(	163.53	,	2.41109E-07	)
(	167.83	,	2.02946E-07	)
(	172.14	,	1.70970E-07	)
(	176.44	,	1.44146E-07	)
(	180.74	,	1.21620E-07	)
(	185.05	,	1.02686E-07	)
(	189.35	,	8.67553E-08	)
(	193.66	,	7.33403E-08	)
(	197.96	,	6.20347E-08	)
(	202.26	,	5.24998E-08	)
(	206.57	,	4.44525E-08	)
(	210.87	,	3.76563E-08	)
(	215.17	,	3.19132E-08	)
(	219.48	,	2.70572E-08	)
(	223.78	,	2.29490E-08	)
(	228.08	,	1.94718E-08	)
(	232.39	,	1.65272E-08	)
(	236.69	,	1.40325E-08	)
(	240.99	,	1.19181E-08	)
(	245.30	,	1.01253E-08	)
(	249.60	,	8.60454E-09	)
(	253.90	,	7.31415E-09	)
(	258.21	,	6.21885E-09	)
(	262.51	,	5.28884E-09	)
(	266.81	,	4.49893E-09	)
(	271.12	,	3.82784E-09	)
(	275.42	,	3.25752E-09	)
(	279.72	,	2.77273E-09	)
(	284.03	,	2.36053E-09	)
(	288.33	,	2.00998E-09	)
(	292.63	,	1.71178E-09	)
(	296.94	,	1.45806E-09	)
(	301.24	,	1.24214E-09	)
(	305.54	,	1.05836E-09	)
(	309.85	,	9.01905E-10	)
(	314.15	,	7.68684E-10	)
(	318.46	,	6.55230E-10	)
(	322.76	,	5.58593E-10	)
(	327.06	,	4.76269E-10	)
(	331.37	,	4.06126E-10	)
(	335.67	,	3.46353E-10	)
(	339.97	,	2.95411E-10	)
(	344.28	,	2.51988E-10	)
(	348.58	,	2.14971E-10	)
(	352.88	,	1.83409E-10	)
(	357.19	,	1.56497E-10	)
(	361.49	,	1.33546E-10	)
(	365.79	,	1.13972E-10	)
(	370.10	,	9.72749E-11	)
(	374.40	,	8.30312E-11	)
(	378.70	,	7.08790E-11	)
(	383.01	,	6.05102E-11	)
(	387.31	,	5.16623E-11	)
(	391.61	,	4.41114E-11	)
(	395.92	,	3.76670E-11	)
(	400.22	,	3.21663E-11	)
(	404.52	,	2.74708E-11	)
(	408.83	,	2.34624E-11	)
(	413.13	,	2.00401E-11	)
(	417.43	,	1.71181E-11	)
(	421.74	,	1.46231E-11	)
(	426.04	,	1.24925E-11	)
(	430.35	,	1.06729E-11	)
(	434.65	,	9.11893E-12	)
(	438.95	,	7.79162E-12	)
(	443.26	,	6.65788E-12	)
(	447.56	,	5.68941E-12	)
(	451.86	,	4.86207E-12	)
(	456.17	,	4.15526E-12	)
(	460.47	,	3.55137E-12	)
(	464.77	,	3.03539E-12	)
(	469.08	,	2.59450E-12	)
(	473.38	,	2.21776E-12	)
(	477.68	,	1.89581E-12	)
(	481.99	,	1.62066E-12	)
(	486.29	,	1.38551E-12	)
(	490.59	,	1.18453E-12	)
(	494.90	,	1.01275E-12	)
(	499.20	,	8.65913E-13	)
};

\end{semilogyaxis}
\end{tikzpicture}
\hspace*{-0.3cm}
\begin{tikzpicture}[scale=0.58]
\def\scale{1.0/70.0}
\def\comma{,}
\begin{semilogyaxis}[
	xlabel={time ($\nu=6$)},
	legend pos = north east,
	yticklabels={,},
	ymin = 5e-14,
	ymax = 10
]

\addplot[red, mark=*, thick] coordinates {
(	0.00	,	1.00000E+00	)
(	14.35	,	1.92213E-03	)
(	28.70	,	1.28204E-04	)
(	43.05	,	1.91986E-05	)
(	57.41	,	5.67390E-06	)
(	71.76	,	1.97817E-06	)
(	86.11	,	7.18092E-07	)
(	100.46	,	2.65953E-07	)
(	114.81	,	9.97287E-08	)
(	129.16	,	3.77293E-08	)
(	143.51	,	1.43678E-08	)
(	157.86	,	5.49970E-09	)
(	172.22	,	2.11390E-09	)
(	186.57	,	8.15300E-10	)
(	200.92	,	3.15354E-10	)
(	215.27	,	1.22278E-10	)
(	229.62	,	4.75141E-11	)
(	243.97	,	1.84973E-11	)
(	258.32	,	7.21292E-12	)
(	272.67	,	2.81678E-12	)
(	287.03	,	1.10146E-12	)
(	301.38	,	4.31226E-13	)
};

\addplot[blue, mark=o, thick] coordinates {
(	0.00	,	1.00000E+00	)
(	14.32	,	2.42791E-03	)
(	28.64	,	1.98639E-04	)
(	42.96	,	3.61407E-05	)
(	57.28	,	1.00590E-05	)
(	71.60	,	3.23734E-06	)
(	85.92	,	1.11082E-06	)
(	100.24	,	3.95897E-07	)
(	114.56	,	1.44562E-07	)
(	128.88	,	5.36753E-08	)
(	143.20	,	2.01663E-08	)
(	157.52	,	7.64303E-09	)
(	171.84	,	2.91594E-09	)
(	186.16	,	1.11824E-09	)
(	200.48	,	4.30613E-10	)
(	214.80	,	1.66381E-10	)
(	229.12	,	6.44677E-11	)
(	243.44	,	2.50388E-11	)
(	257.76	,	9.74479E-12	)
(	272.08	,	3.79930E-12	)
(	286.40	,	1.48359E-12	)
(	300.72	,	5.80134E-13	)
};

\addplot[black, mark=star, thick, mark options = {scale=1.25}] coordinates {
(	0.00	,	1.00000E+00	)
(	22.71	,	4.24715E-04	)
(	45.41	,	2.87254E-05	)
(	68.12	,	9.00984E-06	)
(	90.83	,	3.21825E-06	)
(	113.53	,	1.18561E-06	)
(	136.24	,	4.43548E-07	)
(	158.95	,	1.67589E-07	)
(	181.65	,	6.37686E-08	)
(	204.36	,	2.43945E-08	)
(	227.07	,	9.37178E-09	)
(	249.77	,	3.61295E-09	)
(	272.48	,	1.39691E-09	)
(	295.19	,	5.41440E-10	)
(	317.89	,	2.10314E-10	)
(	340.60	,	8.18469E-11	)
(	363.31	,	3.19051E-11	)
(	386.01	,	1.24555E-11	)
(	408.72	,	4.86903E-12	)
(	431.43	,	1.90567E-12	)
(	454.13	,	7.46670E-13	)
};

\addplot[darkgreen, mark=square, thick] coordinates {
(	0.00	,	1.00000E+00	)
(	22.69	,	4.24715E-04	)
(	45.38	,	2.87254E-05	)
(	68.08	,	9.00984E-06	)
(	90.77	,	3.21825E-06	)
(	113.46	,	1.18561E-06	)
(	136.15	,	4.43548E-07	)
(	158.85	,	1.67589E-07	)
(	181.54	,	6.37686E-08	)
(	204.23	,	2.43945E-08	)
(	226.92	,	9.37178E-09	)
(	249.62	,	3.61295E-09	)
(	272.31	,	1.39691E-09	)
(	295.00	,	5.41440E-10	)
(	317.69	,	2.10314E-10	)
(	340.39	,	8.18469E-11	)
(	363.08	,	3.19051E-11	)
(	385.77	,	1.24555E-11	)
(	408.46	,	4.86903E-12	)
(	431.16	,	1.90567E-12	)
(	453.85	,	7.46670E-13	)
};

\addplot[darkgreen, mark=square, thick, dashed, mark options={solid}] coordinates {
(	0.00	,	1.00000E+00	)
(	15.30	,	1.84489E-03	)
(	30.60	,	1.12983E-04	)
(	45.89	,	1.92836E-05	)
(	61.19	,	6.02693E-06	)
(	76.49	,	2.11700E-06	)
(	91.79	,	7.70640E-07	)
(	107.09	,	2.86208E-07	)
(	122.39	,	1.07587E-07	)
(	137.68	,	4.07947E-08	)
(	152.98	,	1.55662E-08	)
(	168.28	,	5.96903E-09	)
(	183.58	,	2.29789E-09	)
(	198.88	,	8.87485E-10	)
(	214.17	,	3.43694E-10	)
(	229.47	,	1.33411E-10	)
(	244.77	,	5.18904E-11	)
(	260.07	,	2.02185E-11	)
(	275.37	,	7.89019E-12	)
(	290.67	,	3.08342E-12	)
(	305.96	,	1.20649E-12	)
(	321.26	,	4.72616E-13	)
};

\end{semilogyaxis}
\end{tikzpicture}
\hspace*{-0.3cm}
\begin{tikzpicture}[scale=0.58]
\def\scale{1.0/70.0}
\def\comma{,}
\begin{semilogyaxis}[
	xlabel={time ($\nu=10$)},
	legend pos = north east,
	yticklabels={,},
	ymin = 5e-14,
	ymax = 10
]

\addplot[red, mark=*, thick] coordinates {
(	0.00	,	1.00000E+00	)
(	22.70	,	5.95386E-04	)
(	45.39	,	1.86398E-05	)
(	68.09	,	2.04370E-06	)
(	90.78	,	3.76969E-07	)
(	113.48	,	7.41588E-08	)
(	136.17	,	1.48985E-08	)
(	158.87	,	3.03157E-09	)
(	181.56	,	6.22331E-10	)
(	204.26	,	1.28619E-10	)
(	226.95	,	2.67221E-11	)
(	249.65	,	5.57581E-12	)
(	272.34	,	1.16760E-12	)
(	295.04	,	2.45244E-13	)
};

\addplot[blue, mark=o, thick] coordinates {
(	0.00	,	1.00000E+00	)
(	22.70	,	6.34110E-04	)
(	45.39	,	2.46375E-05	)
(	68.09	,	2.73197E-06	)
(	90.79	,	4.42521E-07	)
(	113.48	,	8.03582E-08	)
(	136.18	,	1.54115E-08	)
(	158.88	,	3.04924E-09	)
(	181.57	,	6.14963E-10	)
(	204.27	,	1.25614E-10	)
(	226.97	,	2.58872E-11	)
(	249.66	,	5.37028E-12	)
(	272.36	,	1.11971E-12	)
(	295.06	,	2.34410E-13	)
};

\addplot[black, mark=star, thick, mark options = {scale=1.25}] coordinates {
(	0.00	,	1.00000E+00	)
(	36.80	,	1.31230E-04	)
(	73.59	,	5.40041E-06	)
(	110.39	,	9.79168E-07	)
(	147.18	,	1.94933E-07	)
(	183.98	,	3.95383E-08	)
(	220.77	,	8.10478E-09	)
(	257.57	,	1.67370E-09	)
(	294.36	,	3.47569E-10	)
(	331.16	,	7.25007E-11	)
(	367.95	,	1.51787E-11	)
(	404.75	,	3.18765E-12	)
(	441.54	,	6.71203E-13	)
};

\addplot[darkgreen, mark=square, thick] coordinates {
(	0.00	,	1.00000E+00	)
(	36.67	,	1.31230E-04	)
(	73.34	,	5.40041E-06	)
(	110.00	,	9.79168E-07	)
(	146.67	,	1.94933E-07	)
(	183.34	,	3.95383E-08	)
(	220.01	,	8.10478E-09	)
(	256.67	,	1.67370E-09	)
(	293.34	,	3.47569E-10	)
(	330.01	,	7.25007E-11	)
(	366.68	,	1.51787E-11	)
(	403.34	,	3.18765E-12	)
(	440.01	,	6.71203E-13	)
};

\addplot[darkgreen, mark=square, thick, dashed, mark options={solid}] coordinates {
(	0.00	,	1.00000E+00	)
(	23.74	,	6.08236E-04	)
(	47.48	,	1.84802E-05	)
(	71.22	,	1.96210E-06	)
(	94.96	,	3.60286E-07	)
(	118.70	,	7.09713E-08	)
(	142.44	,	1.42862E-08	)
(	166.18	,	2.91340E-09	)
(	189.92	,	5.99219E-10	)
(	213.66	,	1.24056E-10	)
(	237.40	,	2.58114E-11	)
(	261.14	,	5.39260E-12	)
(	284.88	,	1.13045E-12	)
(	308.62	,	2.37664E-13	)
};

\legend{\small$\mathcal{P}_\ell(\hat{A}_s\comma \hat{S})$, \small$\mathcal{P}_u(\hat{A}_s\comma \hat{S})$, \small$\mathcal{P}_f(\hat{A}_s\comma \hat{S})$, \small$\mathcal{P}_s(\hat{A}_s\comma \hat{S})$, \small$\mathcal{P}_s(\hat{A}\comma \hat{S})$}
\end{semilogyaxis}
\end{tikzpicture}
\caption{Relative residual for different Uzawa variants and smoothing steps.}  
\label{Fig:residual_time}
\end{figure}
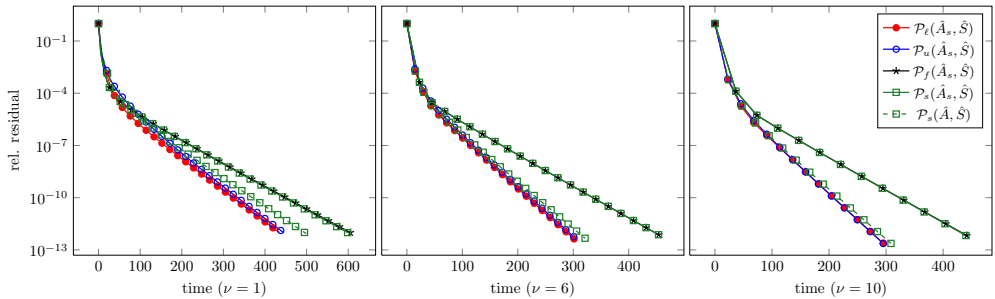

For a better understanding of the different Uzawa variants we present \cref{Fig:residual_time}, which shows the relative residual over time. The residual is measured here with respect to $\mathcal{L}^{-1}$-norm. Notice, the residual in  $\mathcal{L}^{-1}$-norm is equal to the error in the norm $\| \cdot\|_{2,\ell}$. We observe, that the methods are less different in case of fewer smoothing steps ($\nu=1$), while for larger numbers of smoothing steps the methods $\mathcal{P}_\ell(\hat{A}_s, \hat{S})$, $\mathcal{P}_u(\hat{A}_s, \hat{S})$, and $\mathcal{P}_s(\hat{A}, \hat{S})$ seem to be of similar behavior and less time consuming than $\mathcal{P}_f(\hat{A}_s, \hat{S})$ and $\mathcal{P}_s(\hat{A}_s, \hat{S})$.

Finally, we present numerical results which highlight the iteration numbers and time-to-solution. As we have seen the Uzawa variant $\mathcal{P}_\ell$ seems to be one of the most attractive methods with respect to computational cost, it is thus chosen in the forthcoming experiments. Multigrid iterations are performed until the residual in the $\mathcal{L}^{-1}$-norm is reduced by a factor $\epsilon = 10^{-8}$. 

Note that the timings shown are not representative for the HHG software when used in parallel,
since the experiments are conducted to permit a  comparison assuming sequential single core execution.

So far, we consider for the pressure smoother $\hat{S}$ only a damped Jacobi method. As in \cite{gmeiner-huber-john-ruede-wohlmuth_2015}, we also use a damped (forward) Gauss--Seidel method and damped symmetric Gauss--Seidel method, which are both applied to $C$, the associated matrices are denoted by $\widetilde{S}$ and $\widetilde{S}_s$, respectively. The damping parameters are given by $\omega(\widetilde{S}) = 0.3$ and $\omega(\widetilde{S}_s) = 0.23$.
Note, the case including $\widetilde{S}$ is not covered by the theory presented in this article.

\begin{table}[h!]
\centering
\setlength{\tabcolsep}{0.55mm}
\begin{tabular}{ l l | r r r r r r r r r r r r}
\hline
\multicolumn{2}{c|}{} & \multicolumn{12}{c}{} \\[-2.5ex]
\multicolumn{2}{c|}{} & \multicolumn{12}{c}{$\mathcal{P}_\ell(\hat{A}_s, \hat{S})$} \\
\hline
\multicolumn{2}{c|}{$\nu$} & \multicolumn{2}{c}{1}  & \multicolumn{2}{c}{2} & \multicolumn{2}{c}{4} & \multicolumn{2}{c}{6}  & \multicolumn{2}{c}{8} & \multicolumn{2}{c}{10}\\
$\ell$ & \multicolumn{1}{c|}{DoFs}	& \multicolumn{1}{c}{iter} & \multicolumn{1}{c}{time} & \multicolumn{1}{c}{iter} & \multicolumn{1}{c}{time} & \multicolumn{1}{c}{iter} & \multicolumn{1}{c}{time} & \multicolumn{1}{c}{iter} & \multicolumn{1}{c}{time} & \multicolumn{1}{c}{iter} & \multicolumn{1}{c}{time} & \multicolumn{1}{c}{iter} & \multicolumn{1}{c}{time}\\
\hline 
& & & & & & & & & & & \\[-2.5ex]
1 &  $1.4 \cdot 10^{3}$	& 66 & 0.44	& 66 & 0.46	& 17 & 0.13	& 12 & 0.10	& 9 & 0.08	& 8 & 0.07	\\
2 &  $1.4 \cdot 10^{4}$	& 64 & 0.93	& 32 & 0.52	& 17 & 0.31	& 12 & 0.25	& 9 & 0.21	& 7 & 0.18	\\
3 &  $1.2 \cdot 10^{5}$	& 62 & 2.26	& 31 & 1.33	& 17 & 0.92	& 12 & 0.81	& 9 & 0.71	& 7 & 0.64	\\
4 &  $1.0 \cdot 10^{6}$	& 60 & 7.20	& 30 & 4.81	& 16 & 3.79	& 12 & 3.80	& 9 & 3.51	& 7 & 3.26	\\
5 &  $8.2 \cdot 10^{6}$	& 58 & 33.71	& 29 & 25.41	& 16 & 22.98	& 11 & 22.02	& 9 & 23.02	& 7 & 21.57	\\
6 &  $6.6 \cdot 10^{7}$	& 56 & 213.39	& 28 & 165.31	& 15 & 152.04	& 11 & 157.86	& 8 & 148.09	& 7 & 158.87	\\
7 &  $5.3 \cdot 10^{8}$	& 54 & 1509.44	& 27 & 1189.60	& 15 & 1144.01 & 11 & 1194.04	& 8 & 1128.42	& 7 & 1210.43	\\
\hline \\[-1ex]
\hline
\multicolumn{2}{c|}{} & \multicolumn{6}{c|}{} & \multicolumn{6}{c}{} \\[-2.5ex]
\multicolumn{2}{c|}{} & \multicolumn{6}{c|}{$\mathcal{P}_\ell(\hat{A}_s, \widetilde{S}_s)$} & \multicolumn{6}{c}{$\mathcal{P}_\ell(\hat{A}_s, \widetilde{S})$}\\
\hline
\multicolumn{2}{c|}{$\nu$} & \multicolumn{2}{c}{4}  & \multicolumn{2}{c}{6} & \multicolumn{2}{c|}{8} & \multicolumn{2}{c}{4}  & \multicolumn{2}{c}{6} & \multicolumn{2}{c}{8}\\
$\ell$ & \multicolumn{1}{c|}{DoFs}	& \multicolumn{1}{c}{iter} & \multicolumn{1}{c}{time} & \multicolumn{1}{c}{iter} & \multicolumn{1}{c}{time} & \multicolumn{1}{c}{iter} & \multicolumn{1}{c|}{time} & \multicolumn{1}{c}{iter} & \multicolumn{1}{c}{time} & \multicolumn{1}{c}{iter} & \multicolumn{1}{c}{time} & \multicolumn{1}{c}{iter} & \multicolumn{1}{c}{time}\\
\hline 
& & & & & & &\multicolumn{1}{r|}{} & & & & \\[-2.5ex]
1 &  $1.4 \cdot 10^{3}$	& 10 & 0.08	 & 7 & 0.06	 & 6 & \multicolumn{1}{r|}{0.05}	 & 13 & 0.10	 & 9 & 0.07	 & 7 & 0.06\\	
2 &  $1.4 \cdot 10^{4}$	& 9  & 0.17	 & 7 & 0.15	 & 6 & \multicolumn{1}{r|}{0.14}	 & 12 & 0.21	 & 9 & 0.18	 & 7 & 0.15\\	
3 &  $1.2 \cdot 10^{5}$	& 10 & 0.55	 & 6 & 0.41	 & 5 & \multicolumn{1}{r|}{0.40}	 & 12 & 0.61	 & 8 & 0.51	 & 6 & 0.44\\	
4 &  $1.0 \cdot 10^{6}$	& 10 & 2.50	 & 7 & 2.31	 & 5 & \multicolumn{1}{r|}{2.07}	 & 11 & 2.44	 & 8 & 2.33	 & 6 & 2.16\\	
5 &  $8.2 \cdot 10^{6}$	& 11 & 17.00	 & 7 & 15.11	 & 5 & \multicolumn{1}{r|}{13.65}	 & 11 & 14.54	 & 8 & 14.58	 & 6 & 14.03\\	
6 &  $6.6 \cdot 10^{7}$	& 11 & 120.86	 & 7 & 109.35	 & 5 & \multicolumn{1}{r|}{101.26}	 & 11 & 102.66	 & 7 & 92.09	 & 6 & 101.90\\	
7 &  $5.3 \cdot 10^{8}$	& 11 & 913.00	 & 7 & 830.17	 & 5 & \multicolumn{1}{r|}{770.80}	 & 10 & 700.38	 & 7 & 693.75	 & 6 & 769.34\\
\hline
\end{tabular}
\caption{Iteration numbers and time-to-solution (in sec.) including the $W$-cycle for several refinement levels in case of $\mathcal{P}_\ell(\cdot, \cdot)$.}
\label{T:Wcycle_iter_Pl}
\end{table}
In \cref{T:Wcycle_iter_Pl}, we present the corresponding results. The computations are performed up to level $\ell=7$ which involves $5.3\cdot10^8$ DoFs. We observe level independent iteration numbers for all approaches. In case of $\mathcal{P}_\ell(\hat{A}_s, \hat{S})$, the time optimum seems to be located between $\nu=4$ and $\nu=8$. By applying a Gauss--Seidel scheme to $C$ the time-to-solution can be further reduced. In particular, we are able to reduce the time-to-solution by almost
40\% with $\widetilde{S}$ instead of $\hat{S}$.

\begin{table}[h!]
\centering
\setlength{\tabcolsep}{0.53mm}
\begin{tabular}{ l l | r r r r r r r r r r r r}
\hline
\multicolumn{2}{c|}{} & \multicolumn{12}{c}{} \\[-2.5ex]
\multicolumn{2}{c|}{} & \multicolumn{12}{c}{$\mathcal{P}_\ell(\hat{A}_s, \hat{S})$} \\
\hline
\multicolumn{2}{c|}{$\nu$} & \multicolumn{2}{c}{1}  & \multicolumn{2}{c}{2} & \multicolumn{2}{c}{4} & \multicolumn{2}{c}{6}  & \multicolumn{2}{c}{8} & \multicolumn{2}{c}{14}\\
$\ell$ & \multicolumn{1}{c|}{DoFs}	& \multicolumn{1}{c}{iter} & \multicolumn{1}{c}{time} & \multicolumn{1}{c}{iter} & \multicolumn{1}{c}{time} & \multicolumn{1}{c}{iter} & \multicolumn{1}{c}{time} & \multicolumn{1}{c}{iter} & \multicolumn{1}{c}{time} & \multicolumn{1}{c}{iter} & \multicolumn{1}{c}{time} & \multicolumn{1}{c}{iter} & \multicolumn{1}{c}{time}\\
\hline 
& & & & & & & & & & & \\[-2.5ex]
1 &  $1.4 \cdot 10^{3}$	& 66	& 0.24	& 50	& 0.20	& 17	& 0.08	& 12	& 0.06		& 9	& 0.04		& 7	& 0.05	\\
2 &  $1.4 \cdot 10^{4}$	& 63	& 0.32	& 31	& 0.19	& 18	& 0.14	& 12	& 0.11		& 10	& 0.11		& 6	& 0.10	\\
3 &  $1.2 \cdot 10^{5}$	& --	& --	& 30	& 0.52	& 18	& 0.46	& 12	& 0.41		& 9	& 0.39		& 6	& 0.42	\\
4 &  $1.0 \cdot 10^{6}$	&	&	&--	& --	& 18	& 2.75	& 12	& 2.58		& 9	& 2.49		& 7	& 3.24	\\
5 &  $8.2 \cdot 10^{6}$	& 	& 	& 	& 	& 22	& 24.13	& 11	& 17.11		& 9	& 18.21		& 7	& 23.76	\\
6 &  $6.6 \cdot 10^{7}$	& 	& 	& 	& 	& 118	& 990.37& 13	& 155.42	& 11	& 170.63	& 8	& 209.41	\\
7 &  $5.3 \cdot 10^{8}$	& 	& 	& 	& 	& --	& --	& 16	& 1468.48	& 13	& 1549.13	& 9	& 1811.18	\\
\hline \\[-1ex]
\hline
\multicolumn{2}{c|}{} & \multicolumn{6}{c|}{} & \multicolumn{6}{c}{} \\[-2.5ex]
\multicolumn{2}{c|}{} & \multicolumn{6}{c|}{$\mathcal{P}_\ell(\hat{A}_s, \widetilde{S}_s)$} & \multicolumn{6}{c}{$\mathcal{P}_\ell(\hat{A}_s, \widetilde{S})$}\\
\hline
\multicolumn{2}{c|}{$\nu$} & \multicolumn{2}{c}{4}  & \multicolumn{2}{c}{6} & \multicolumn{2}{c|}{8} & \multicolumn{2}{c}{4}  & \multicolumn{2}{c}{6} & \multicolumn{2}{c}{8}\\
$\ell$ & \multicolumn{1}{c|}{DoFs}	& \multicolumn{1}{c}{iter} & \multicolumn{1}{c}{time} & \multicolumn{1}{c}{iter} & \multicolumn{1}{c}{time} & \multicolumn{1}{c}{iter} & \multicolumn{1}{c|}{time} & \multicolumn{1}{c}{iter} & \multicolumn{1}{c}{time} & \multicolumn{1}{c}{iter} & \multicolumn{1}{c}{time} & \multicolumn{1}{c}{iter} & \multicolumn{1}{c}{time}\\
\hline 
& & & & & & &\multicolumn{1}{r|}{} & & & & \\[-2.5ex]
1 &  $1.4 \cdot 10^{3}$	& 10	& 0.04		& 8	& 0.04		& 7	& \multicolumn{1}{r|}{0.04}	& 13	& 0.05		& 10	& 0.05		& 8	& 0.04\\
2 &  $1.4 \cdot 10^{4}$	& 10	& 0.08		& 7	& 0.07		& 6	& \multicolumn{1}{r|}{0.07}	& 13	& 0.09		& 9	& 0.08		& 7	& 0.07\\
3 &  $1.2 \cdot 10^{5}$	& 9	& 0.25		& 7	& 0.25		& 6	& \multicolumn{1}{r|}{0.27}	& 12	& 0.28		& 8	& 0.25		& 8	& 0.32\\
4 &  $1.0 \cdot 10^{6}$	& 10	& 1.64		& 9	& 2.08		& 7	& \multicolumn{1}{r|}{2.09}	& 12	& 1.71		& 8	& 1.60		& 9	& 2.30\\
5 &  $8.2 \cdot 10^{6}$	& 11	& 13.08		& 11	& 18.65		& 7	& \multicolumn{1}{r|}{15.40}	& 14	& 14.24		& 10	& 14.31		& 10	& 18.52\\
6 &  $6.6 \cdot 10^{7}$	& 14	& 128.04	& 13	& 170.02	& 8	& \multicolumn{1}{r|}{136.09}	& 18	& 139.31	& 13	& 142.72	& 12	& 170.6\\
7 &  $5.3 \cdot 10^{8}$	& 19	& 1334.03	& 16	& 1606.78	& 8	& \multicolumn{1}{r|}{1045.98}	& 24	& 1417.72	& 17	& 1424.13	& 13	& 1409.50\\
\hline
\end{tabular}
\caption{Iteration numbers and time-to-solution (in sec.) including the $V$-cycle for several refinement levels in case of $\mathcal{P}_\ell(\cdot, \cdot)$.}
\label{T:Vcycle_iter_Pl}
\end{table}
Even though not covered by the theory in this article, 
we finally present numerical results including the $V$-cycle in \cref{T:Vcycle_iter_Pl}.
Note, the settings and parameters remain the same as for the previous results in \cref{T:Wcycle_iter_Pl}.
We observe that in case of a $V$-cycle the Uzawa smoother $\mathcal{P}_\ell(\hat{A}_s, \hat{S})$  diverges for small numbers of smoothing steps, as indicated by ''--'' in the table. Only with more smoothing steps convergence can be enforced. The convergence deteriorates and then more iterations are needed when the number of levels is increased. Choosing a different pressure smoother, here either $\widetilde{S}_s$ or $\widetilde{S}$, leads to better results in the iteration numbers, 
but may still lack of robustness. In the sequential setting used here, the time-to-solution achieved with the $W$-cycle and $V$-cycle are comparable, with the $W$-cycle having advantages for large problems. Note that this assessment is expected to change in a parallel setting, since a $W$-cycle with $\ell = 7$ levels will require $2^7=128$ coarse grid solves and may thus generate a significant sequential bottleneck. As a compromise, a variable $V$-cycle can be applied, where the number of smoothing steps is increased on the coarser levels of the mesh hierarchy. Experimentally this results in a robust method (wrt.\ iteration numbers) 
and an improved time-to-solution on massively parallel systems. For details, we refer to \cite{gmeiner-huber-john-ruede-wohlmuth_2015}.

\begin{remark}
The results of \cref{T:Wcycle_iter_Pl,T:Vcycle_iter_Pl} show that there is a fundamental difference if we use the mass matrix which is spectrally equivalent to the Schur complement or the scaled discrete pressure Laplacian for building the smoother in the pressure component. While formally we can satisfy condition \eqref{omega1} by using a hybrid Gauss-Seidel applied to either of these two options, the use of the scaled Laplacian is much more favorable. This might seem to be a surprise since $C$ is not spectrally equivalent to the Schur complement. However this choice mimics the fact that on the continuous level the commutator $\Delta \nabla - \nabla \Delta$ is zero, and thus the pressure satisfies a Laplace equation. A detailed study on the smoother performance taking into account the discrete commutator can be found in \cite{wang2013multigrid}. 
\end{remark}

\section{Concluding remarks}
In this article, we presented a unified analysis of the smoothing properties for different Uzawa variants. In particular, we were able to prove the smoothing property for the commonly used inexact Uzawa scheme $\mathcal{P}_\ell$. Also, a new smoother, belonging to the class of Uzawa schemes, has been proposed and analyzed. Numerical examples illustrated the obtained theoretical results and the computational cost of the different smoothers was discussed.

So far, the analysis requires a symmetric smoother $\hat{S}$ for the (inexact) Schur complement and additionally requires symmetry for $\hat{A}$ in the cases of $\mathcal{P}_i$, $i \in \{ \ell, u, f \}$. The extension to the non-symmetric case for $\hat{A}$ and $\hat{S}$, might be of great interest since the computational cost could perhaps be further narrowed.

\section*{Acknowledgments}
This work was financially supported (in part) by the German Research Foundation (DFG)  through  grant WO 671/11-1.

\bibliographystyle{plain}
\bibliography{references}

\end{document}